\newcommand{\pli}[1]{\ensuremath{\overset{+}{\imath}}}
\newcommand{\Sym}{\ensuremath{\mathrm{Sym}}}
\newcommand{\diam}{\ensuremath{\mathrm{Diam}}}
\newcommand{\rank}{\ensuremath{\mathrm{rank}}}
\newcommand{\A}[1]{\ensuremath{\mathrm{A}}_{#1}}
\newcommand{\B}[1]{\ensuremath{\mathrm{B}}_{#1}}
\newcommand{\D}[1]{\ensuremath{\mathrm{D}}_{#1}}
\newcommand{\F}[1]{\ensuremath{\mathrm{F}}_{#1}}
\renewcommand{\H}[1]{\ensuremath{\mathrm{H}}_{#1}}
\newcommand{\I}{\ensuremath{\mathcal{I}}}
\newcommand{\ZZ}{\mathrm{Z}}
\newcommand{\R}{R} 
\newcommand{\E}{\ensuremath{\mathcal{E}_0}}
\newcommand{\Eh}{\ensuremath{\widehat{\mathcal{E}}_0}}
\renewcommand{\P}{\mathcal{P}}
\newtheorem{thm}{Theorem}[section]
\newtheorem{lemma}[thm]{Lemma}
\newtheorem{cor}[thm]{Corollary}
\theoremstyle{definition}
\newtheorem{defn}[thm]{Definition}
\begin{document}
    \title{\Large\textbf{The Excess Zero Graph of a Coxeter Group}}
      \author{Sarah Hart, Veronica Kelsey and Peter Rowley} \date{}
  \maketitle

\vspace*{-10mm}
\begin{abstract} For a Coxeter group $W$ with length function $\ell$, the {\em excess zero graph} $\E(W)$ has vertex set the non-identity involutions of $W$, with two involutions $x$ and $y$ adjacent whenever $\ell(xy)=\ell(x)+\ell(y)$. Properties of this graph such as connectivity, diameter and valencies of certain vertices of $\E(W)$ are explored.\\

\noindent MSC2000: 20F55, 05C25, 20F65

\noindent Keywords: Coxeter group, involution, excess, graph, diameter, valency
\end{abstract}
\section{Introduction}\label{sec:intro}
The study of Coxeter groups goes hand in hand with the study of their length functions. Indeed, a group generated by involutions with an associated length function is a Coxeter group precisely when it satisfies the Exchange Condition (see Lemma \ref{lem:ex}). Many results about Coxeter groups involve the interplay between the group multiplication and the length function. In this paper, we study an aspect of this interplay in the case of involutions.

Let $W$ be a Coxeter group with $\R$ its set of fundamental (or simple) reflections, and let $w \in W$. The \emph{length function} $\ell$ on $W$ is defined by $\ell(w)=0$ if $w=1$, and for $w \neq 1$ 
\[\ell(w)=\min\{k \in \mathbb{N} \; |\; w=r_1r_2 \cdots r_k \text{ where } r_i \in \R\}.\]
The nature of the relationship between $\ell(w_1w_2)$ and $\ell(w_1)+\ell(w_2)$, where $w_1,w_2 \in W$, arises in many situations. Generally, it is not the case that $\ell(w_1w_2)=\ell(w_1)+\ell(w_2)$, though there are a number of instances where this does occur. For example, if $W_J$ is a standard parabolic subgroup, then there is a set $X_J$ referred to as the set of distinguished right coset representatives of $W_J$ in $W$ for which $\ell(wx)=\ell(w)+\ell(x)$ for all $w \in W_J$, $x \in X_J$ (see Lemma~\ref{lem:cosetreps}).

The involutions in a group are often important in highlighting certain properties of the group. This is especially true for Coxeter groups, particularly because of the fundamental reflections. For a Coxeter group $W$ put 
\[\mathcal{W}=\{w \in W\;|\; x,y \in W, \; w=xy, \; x^2=1=y^2\}.\]
By \cite{carter}, if $W$ is a finite Coxeter group, then $W=\mathcal{W}$, but in general $\mathcal{W}$ may be a proper subset of $W$. For $w \in \mathcal{W}$, the \emph{excess} of $w$, denoted $e(w)$, was introduced in \cite{invol} and is defined by 
\[ e(w)=\min\{\ell(x)+\ell(y)-\ell(w) \; | \; x, y \in W, \; w=xy, \; x^2=y^2=1\}.\]
Evidently, $e(w)=0$ if and only if there exist $x, y \in W$ with $w=xy$ such that $x^2=y^2=1$ and $\ell(w)=\ell(x)+\ell(y)$. Note that, as $x$ or $y$ could be the identity, $e(w)=0$ for all involutions in $W$. 

In a series of papers \cite{invol, excessminlen, onexcess, excessmaxlen}, properties of excess are explored. In \cite{invol} it is shown that for any finite rank Coxeter group $W$ and $w \in \mathcal{W}$, there exists $w_{\ast} \in W$ which is $W$-conjugate to $w$ and has $e(w_{\ast})=0$. In a similar vein, in \cite{excessminlen}, it is proved that for finite Coxeter groups such a $w_{\ast}$ may be found with the property that it has minimal length in its ${W}$-conjugacy class. An analogous result for maximal length in $W$-conjugacy classes is obtained in \cite{excessmaxlen}.
These investigations into elements of $W$ with zero excess have to some extent pushed involutions into the background. In this paper we put the spotlight on involutions and their impact on zero excess.

\begin{defn} Let $W$ be Coxeter group with length function $\ell$ and set of non-identity involutions $\I(W)$. The {\em excess zero graph} $\E(W)$ has vertex set $\I(W)$ with $x,y \in \I(W)$ adjacent whenever $\ell(xy)=\ell(x)+\ell(y)$.
\end{defn}

The definition of this graph first arose in the study of prefixes in Coxeter groups, see \cite{prefixes}. For $w \in W$, we say that $u \in \I(W)$ is an {\em involution prefix} of $w$ if there exists $v \in W$ such that $w=uv$ with $\ell(w)=\ell(u)+\ell(v)$. An element $w$ has the {\em ancestor property} if the set of all involution prefixes for $w$ contains a unique involution of maximal length. It is conjectured in \cite{prefixes} that the ancestor property holds for all non-identity elements in finite Coxeter group. A result of this conjecture would be a canonical way to write each non-identity element $w$ of $W$ as $w=x_1x_2\cdots x_k$ with $x_i \in \I(W)$ and $\ell(w)=\ell(x_1)+\cdots + \ell(x_k)$. 
In the language of the excess zero graph, this means that $x_1, x_2,\ldots,x_k$ is a path in $\E(W)$. It was this observation that first motivated the study of the excess zero graph as a potential tool for proving the ancestor conjecture.

We briefly justify our choice of vertex set.
In its current form, the excess zero graph has no loops. If the identity element were included in the vertex set, then it would be adjacent to all other vertices, including itself.
In addition to the observation above on ancestors, by considering only involutions as vertices the excess zero graph is undirected.\\ \medskip

As an indicative example, in Table \ref{table:tabA}, we take a glance at $\E(W)$ when $W = W(\A{n})$ and $n \in \{3,4,5,6\}$. The entries of the table were calculated with the aid of \textsc{Magma} \cite{magma}. By $i^k$ we mean that there are $k$ involutions in $\E(W)$ which have valency $i$. 
\begin{table}[h]
\begin{tabular}{|c|c|}
\hline
Group & Valency distribution\\
\hline
$W(\A{3})$ & $0^1.1^3.2^1.3^1.4^3$\\
$W(\A{4})$ & $0^1.1^4.2^2.3^6.4^3.6^3.7^2.12^4$\\
$W(\A{5})$ & $0^1.1^5.2^3.3^{13}.4^7.5^2.7^{15}.8^1.9^8.10^1.13^4.15^1.19^9.37^5$\\
$W(\A{6})$ & $0^1.1^6.2^4.3^{23}.4^9.5^6.6^9.7^{22}.8^4.9^{27}.10^4.11^{10}.12^7.13^8.14^2.15^{10}.16^3.19^{12}.21^{4}.22^{12}.24^2.25^5$\\
& $27^8.29^2.31^4.39^2.41^5.55^4.59^{10}115^6$\\
\hline
\end{tabular} \caption{Valency distribution for small groups of type $\A{n}$} \label{table:tabA}
\end{table}

At first sight the data in Table \ref{table:tabA} looks to be very haphazard, but this is not totally unexpected. Of course, $W$ acts upon $\I(W)$ by conjugation, but conjugation severely damages the excess zero property. As a result $W$ does not induce graph automorphisms on $\E(W)$. We do however observe some structure. For example, for $n=3, 4, 5, 6$, compare the final term of the valency distribution in $W(A_n)$ (respectively, 4, 12, 37, and 115) with $|\I(W(\A{n}))|$, which is, respectively, 9, 25, 75 and 231. This is part of a general pattern between the highest valency and the number of vertices of this graph which we see in Corollary~\ref{thm:highval}.
The one isolated vertex of $\E(W)$ in Table \ref{table:tabA} (indicated by $0^1$) is $w_0$, the longest element of $W$(and only defined when $W$ is finite). \\

The main topics to be explored in this paper are the possible diameters of $\E(W)$ and the distribution of the valencies of its vertices. We outline our results below, beginning with a theorem concerned with the first of these topics.
\begin{thm}\label{thm:diam} Let $W$ be a Coxeter group of rank at least 2.
\begin{enumerate}[label=\rm{(\roman*)}]
\item If $W$ is finite, then $\E(W)$ has two connected components: $\I(W)\backslash \{w_0\}$ and $\{w_0\}$. If $W$ is infinite, then $\E(W)$ is connected.
\item Let $\Eh(W)$ be the connected component of $\E(W)$ not containing $w_0$. Then the diameter of $\Eh(W)$ is at most 3.
\end{enumerate}
\end{thm}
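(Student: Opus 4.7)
The core idea is that in rank at least 2 the simple reflections $\R$ form a clique in $\E(W)$, since any two distinct $r,s \in \R$ satisfy $\ell(rs) = 2 = \ell(r) + \ell(s)$. The plan is to show that every non-$w_0$ involution is adjacent to some element of $\R$; both parts of the theorem then follow by routing paths through this clique. The isolation of $w_0$ in the finite case is immediate from the standard identity $\ell(w_0 x) = \ell(w_0) - \ell(x)$: an edge $w_0 \sim x$ in $\E(W)$ would give $\ell(w_0) + \ell(x) = \ell(w_0) - \ell(x)$, forcing $\ell(x) = 0$, contradicting $x \in \I(W)$.

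The main step is the following descent claim: for every $x \in \I(W) \setminus \{w_0\}$ (and for every $x \in \I(W)$ when $W$ is infinite) there exists $r \in \R$ with $\ell(rx) = \ell(x) + 1$, so that $r \sim x$ in $\E(W)$. I would prove this via roots: $r$ is a left descent of $x$ iff $x^{-1}\alpha_r \in \Phi^-$. If every $r \in \R$ were a left descent of $x$, then $x^{-1}$ would send every simple root to a negative root; since positive roots are non-negative integer combinations of simple roots, it would follow that $x^{-1}(\Phi^+) \subseteq \Phi^-$, hence $\ell(x) = |\Phi^+|$. In the finite case this forces $x = w_0$, the unique element of maximal length; in the infinite case $|\Phi^+|$ is infinite, which is impossible.

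Given the descent claim, part (i) is immediate: each non-$w_0$ involution has a neighbour in the clique on $\R$, so all such vertices together with $\R$ form a single connected component, while $w_0$ is isolated. For part (ii), given $x, y \in \Eh(W)$ select $r_x, r_y \in \R$ with $r_x \sim x$ and $r_y \sim y$; then $x, r_x, r_y, y$ is a walk in $\E(W)$ of length at most $3$, collapsing to a shorter path in the degenerate cases $x \in \R$, $y \in \R$, $r_x = r_y$, or $x = r_y$. Hence $\diam(\Eh(W)) \leq 3$. The only substantive obstacle is the descent claim; once it is in place, the rest is combinatorial routing through the clique on $\R$.
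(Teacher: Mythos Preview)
Your proposal is correct and follows essentially the same approach as the paper: the paper isolates the ``descent claim'' as Lemma~\ref{lem:fundneighbour} with exactly your root argument (if every simple root lies in $N(x)$ then $\Phi^+\subseteq N(x)$, forcing $x=w_0$ in the finite case and a contradiction in the infinite case), then routes through the clique on $\R$ just as you do. One cosmetic point: positive roots in a general Coxeter group are non-negative \emph{real} combinations of simple roots, not integer ones, but this does not affect your argument.
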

The requirement of rank being at least 2 is to omit $W=\A{1}$, since in this case $\I(W)=\{w_0\}$ and so $\Eh(W)$ is the empty graph, whose diameter is undefined.

We now consider the valencies of vertices. For $x \in \I(W)$, let $\Delta_1(x)$ be the set of vertices adjacent to $x$ in $\E(W)$. Hence, the valency of $x$ is the cardinality of $\Delta_1(x)$. We have the following result for finite rank Coxeter groups.

\begin{thm}\label{thm:samecard}  Suppose that $W$ is a Coxeter group of finite rank and let $r$ be a fundamental reflection of $W$. Then the sets $\Delta_1(r)$ and $\mathcal{I}(W) \setminus (\{r\} \cup \Delta_1(r) )$ have the same cardinality.
\end{thm}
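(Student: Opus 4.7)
The plan is to exhibit an explicit bijection between $\Delta_1(r)$ and $\I(W) \setminus (\{r\} \cup \Delta_1(r))$. Write $A = \Delta_1(r)$ and $B = \I(W) \setminus (\{r\} \cup \Delta_1(r))$, so that $\I(W) \setminus \{r\} = A \sqcup B$, with $A$ consisting of those $x$ satisfying $\ell(rx) = \ell(x) + 1$ and $B$ of those with $\ell(rx) = \ell(x) - 1$ (these being the only possibilities since $r$ is a simple reflection). I would define
\[
\phi(x) = \begin{cases} rxr & \text{if } rxr \neq x, \\ rx & \text{if } rxr = x, \end{cases}
\]
and show that the same formula, applied to elements of $B$, lands in $A$ and serves as the inverse of $\phi$.

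The crux is the following key lemma: \emph{for any $x \in \I(W) \setminus \{r\}$, if $\ell(rxr) = \ell(x)$ then $rxr = x$}. Granting this, when $x \in A$ and $rxr \neq x$, the possibilities $\ell(rxr) \in \{\ell(x), \ell(x)+2\}$ collapse to $\ell(rxr) = \ell(x)+2$, and since $\ell(r \cdot rxr) = \ell(xr) = \ell(rx) = \ell(x)+1$ (using $\ell(w) = \ell(w^{-1})$ together with $x = x^{-1}$), $r$ is a left descent of $rxr$, so $rxr \in B$. When $x \in A$ commutes with $r$, $rx$ is an involution distinct from $r$ satisfying $\ell(r \cdot rx) = \ell(x) = \ell(rx) - 1$, so $rx \in B$. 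The parallel statements for $y \in B$ follow the same pattern, producing a well-defined $\phi^{-1} : B \to A$. Finally, a short direct check shows $\phi \circ \phi = \mathrm{id}$ on both $A$ and $B$ (the key point being that $\phi(x)$ inherits from $x$ the property of commuting or not commuting with $r$), so $\phi$ restricts to a bijection $A \to B$.

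The main obstacle is the key lemma, which I would prove by splitting on $x \in A$ versus $x \in B$ and invoking the Exchange Condition (Lemma~\ref{lem:ex}). For $x \in A$, fix a reduced expression $x = s_{j_1}\cdots s_{j_k}$ ($k = \ell(x)$), so that $rx = r\,s_{j_1}\cdots s_{j_k}$ is reduced of length $k+1$; the hypothesis $\ell(rxr) = k$ makes $r$ a right descent of $rx$, and Exchange expresses $rxr$ as this reduced word with one letter removed. Removing the leading $r$ gives $rxr = x$ immediately; removing some $s_{j_i}$ gives $xr = s_{j_1}\cdots \widehat{s_{j_i}}\cdots s_{j_k}$ of length $k-1$, contradicting $\ell(xr) = \ell(rx) = k+1$. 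For $x \in B$, choose a reduced expression $x = r\,s_{j_1}\cdots s_{j_{k-1}}$ starting with $r$; then $rxr = s_{j_1}\cdots s_{j_{k-1}}\,r$ is reduced of length $k$ with $r$ as a left descent of $rxr$. Applying the Exchange Condition to $r \cdot rxr = xr$ expresses $xr$ as that reduced word with one letter deleted: deletion of the trailing $r$ gives $xr = s_{j_1}\cdots s_{j_{k-1}} = rx$, so $r$ and $x$ commute and $rxr = x$; deletion of some $s_{j_i}$ produces a length-$(k-2)$ expression for $x$, contradicting $\ell(x) = k$. The crucial ingredient in both halves is the identity $\ell(rx) = \ell(xr)$ for involutions, which is precisely what blocks the otherwise-possible ``middle'' exchange.
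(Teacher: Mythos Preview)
Your proof is correct and follows essentially the same approach as the paper: the paper (in Lemmas~\ref{lendown} and~\ref{cardinality}) splits $\I(W)\setminus\{r\}$ into involutions commuting with $r$ (paired via $x\mapsto rx$) and those not commuting with $r$ (paired via $x\mapsto rxr$), which is exactly your map $\phi$. The only minor difference is that the paper proves the key fact ``$\ell(rxr)=\ell(x)$ forces $rxr=x$'' via a root-system argument (showing $r\cdot(x\cdot\alpha_r)$ cannot change sign unless $x\cdot\alpha_r=\pm\alpha_r$), whereas you obtain it from the Exchange Condition; both are standard and equally short.
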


A corollary of Theorem~\ref{thm:samecard}, hinted at in Table \ref{table:tabA}, is as follows.

\begin{cor}\label{thm:highval} For a finite Coxeter group $W$ and $r$ a fundamental reflection,
\[|\Delta_1(r)|=\frac{|\I(W)|-1}{2}.\]
Moreover, for $x \in \mathcal{I}(W) \setminus R, $ \[|\Delta_1(x)|  < \frac{|\I(W)|-1}{2}.\]
\end{cor}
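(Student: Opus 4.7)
The plan is to derive both parts from Theorem~\ref{thm:samecard}. Part~(i) is bookkeeping: the sets $\{r\}$, $\Delta_1(r)$, and $\I(W)\setminus(\{r\}\cup\Delta_1(r))$ partition $\I(W)$, and the theorem equates the cardinalities of the latter two, yielding $|\I(W)|=1+2|\Delta_1(r)|$. For part~(ii) I would fix $x\in\I(W)\setminus R$ (so $\ell(x)\ge 2$) and a left descent $r\in R$ of $x$, then prove the stronger statement $\Delta_1(x)\subsetneq\Delta_1(r)$; combined with part~(i) this gives the bound.

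For the inclusion $\Delta_1(x)\subseteq\Delta_1(r)$: if $y\in\Delta_1(x)$ had $r$ as a left descent, writing $y=ry'$ with $\ell(y')=\ell(y)-1$ and using that $r$ is also a right descent of the involution $x$ (so $\ell(xr)=\ell(x)-1$) would give
\[
\ell(xy)=\ell((xr)y')\le\ell(xr)+\ell(y')=\ell(x)+\ell(y)-2,
\]
contradicting $\ell(xy)=\ell(x)+\ell(y)$. For strictness I would split into two cases. If $x$ has a second left descent $r'\ne r$, then $r'$ does the job: $r'\in\Delta_1(r)$ since $\ell(rr')=2$, while $r'\notin\Delta_1(x)$ since $r'$ is a right descent of $x$. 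Otherwise $r$ is the unique left descent, which forces $\ell(x)\ge 3$ (a length-two involution must be a product of two commuting fundamental reflections, providing two descents). In this remaining case I would invoke the bijection $\phi_r$ underlying the proof of Theorem~\ref{thm:samecard}---sending $y$ to $ry$ when $r,y$ commute and to $ryr$ otherwise---and set $z=\phi_r(w_0)$. Since $w_0$ is isolated in $\E(W)$, we have $z\in\Delta_1(r)$; and a short inversion-set computation shows $z$ has inversion set $\Phi^+\setminus S$ for some $S\subseteq\Phi^+$ with $|S|\le 2$, while $x$ has $\ell(x)\ge 3$ inversions, so the inversion sets of $x$ and $z$ must meet and hence $z\notin\Delta_1(x)$.

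The main obstacle is the unique-descent case: no second fundamental reflection is available, and when $w_0$ fails to commute with $r$ the candidate $rw_0r$ excludes two positive roots from its inversion set rather than one. The reduction $\ell(x)\ge 3$ is exactly what keeps the inversion-set count going the right way, which is the purpose of the length-two sub-analysis.
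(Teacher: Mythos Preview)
Your proof is correct and follows essentially the same route as the paper. Part~(i) is identical. For part~(ii), the paper (in Lemma~\ref{gregarious}) proves the inclusion $\Delta_1(x)\subseteq\Delta_1(r)$ via the inversion-set argument $N(r)=\{\alpha_r\}\subseteq N(x)$, and for strictness uses exactly the witness you propose: $rw_0$ when $r$ commutes with $w_0$, and $rw_0r$ otherwise. The only organizational difference is that the paper does not first split on the number of descents of $x$; instead it goes straight to $rw_0$ or $rw_0r$, and when $rw_0r\in\Delta_1(x)$ forces $\ell(x)\le 2$, it handles the $\ell(x)=2$ case at that point by producing $s\in\Delta_1(r)\setminus\Delta_1(x)$ (your Case~1 argument). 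Your detour through the bijection $\phi_r$ to place $z$ in $\Delta_1(r)$ is unnecessary---one checks directly that $\ell(r\cdot rw_0)=\ell(rw_0)+1$ and $\ell(r\cdot rw_0r)=\ell(rw_0r)+1$---but it is not wrong.
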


Corollary~\ref{thm:highval} is only the tip of the iceberg for finite irreducible Coxeter groups $W$, as we see with our next theorem. For positive integers $m$ and $n$, and $x \in \I(W(\A{n-1})) = \I(\Sym(n))$ a product of $m$ distinct mutually commuting fundamental reflections, write $\delta(m,n)$ for $|\Delta_1(x)|$. We shall prove that this is well-defined in Lemma~\ref{wlog}. Even though $1\notin \I(W)$, it is useful to define $\delta(0,n) = |\I(\Sym(n))|$; we also set $\delta(m,0) = 0$. 
\begin{thm} \label{thm:valency}
Let $m\geq 2$ and $n\geq 2m$. Then  
\[\delta(m,n) = \textstyle\frac{1}{2}\big(\delta(m-1,n) + (m-1)\delta(m-2,n-4) + m-2\big).\]
\end{thm}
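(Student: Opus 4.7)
By Lemma~\ref{wlog}, $\delta(m,n)$ is independent of the choice of the mutually commuting product, so I take the specific representative $x = r_1 r_3 \cdots r_{2m-1}$, which is available since $n \geq 2m$. Identifying $W(\A{n-1})$ with $\Sym(n)$ and $r_k$ with the transposition $(k,k+1)$, my first task is to translate the adjacency condition into combinatorics: because the $r_{2j-1}$ pairwise commute and act on disjoint pairs of values, $\ell(xy) = \ell(x) + \ell(y)$ holds for an involution $y$ precisely when no $r_{2j-1}$ is a left descent of $y$, and since $y^{-1} = y$ this reads $y(2j-1) < y(2j)$ for $j = 1, \ldots, m$. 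Letting $f(m,n)$ denote the number of involutions of $\Sym(n)$, including the identity, satisfying all these inequalities, we have $\delta(m,n) = f(m,n) - 1$, and an algebraic manipulation reduces the theorem to
\[ 2 f(m,n) = f(m-1,n) + (m-1)\, f(m-2, n-4). \]

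Write $A$ for the set counted by $f(m,n)$ and $B$ for the set of involutions satisfying the conditions for $j \leq m-1$ but with $y(2m-1) > y(2m)$, so that $|A| + |B| = f(m-1,n)$. It therefore suffices to show $|A| - |B| = (m-1)\, f(m-2, n-4)$. The main tool is the self-inverse map $\sigma(y) = r_{2m-1}\, y\, r_{2m-1}$ on the set of involutions of $\Sym(n)$, which amounts to swapping the labels $2m-1$ and $2m$ throughout the cycle structure of $y$. I stratify $y \in A \cup B$ by the roles of $\{2m-1, 2m\}$: both fixed by $y$; the 2-cycle $(2m-1, 2m)$ of $y$; exactly one fixed with the other in a 2-cycle; or both in 2-cycles with elements $a = y(2m-1)$ and $b = y(2m)$ outside $\{2m-1, 2m\}$. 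In the first two ($\sigma$-fixed) strata, the restriction of $y$ to the other $n-2$ elements is an arbitrary involution subject to the first $m-1$ inequalities, giving equal contributions of $f(m-1, n-2)$ to $|A|$ and to $|B|$. In the third stratum, $\sigma$ clearly interchanges $A$ and $B$ with the conditions for $j \leq m-1$ preserved, so it contributes $0$ as well.

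In the final stratum $\sigma$ flips the $m$-th condition because $(a,b)$ becomes $(b,a)$, so the question becomes whether the $j < m$ inequalities are also preserved. These values of $y$ are modified only at positions $a$ and $b$, so only when $a$ or $b$ lies in $\{1, \ldots, 2m-2\}$ is there anything to check. A direct case analysis shows that the $j < m$ inequalities persist after swapping in every subcase \emph{except} when $\{a,b\} = \{2i-1, 2i\}$ for some $i \in \{1, \ldots, m-1\}$. In that exceptional subcase, the configuration $a = 2i-1$, $b = 2i$ gives $y \in A$ (the $i$-th inequality reads $y(2i-1) = 2m-1 < 2m = y(2i)$), but $\sigma(y)$ fails the $i$-th inequality and thus lies outside $A \cup B$; in the mirror configuration $a = 2i$, $b = 2i-1$, $y$ itself already fails the $i$-th inequality while $\sigma(y) \in A$. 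Each such $\sigma$-orbit therefore contributes exactly $+1$ to $|A| - |B|$. Summing over the $m-1$ choices of $i$ and the $f(m-2, n-4)$ possibilities for the restriction of $y$ to the remaining $n-4$ elements (an involution satisfying the $m-2$ remaining order conditions after relabeling), we obtain $|A| - |B| = (m-1)\, f(m-2, n-4)$, completing the recursion.

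I expect the main technical obstacle to be the case-by-case verification that $\sigma$ preserves the $j < m$ inequalities in every subcase of the final stratum other than the exceptional one — specifically, when exactly one of $a,b$ lies in $\{1, \ldots, 2m-2\}$, or when both lie in $\{1, \ldots, 2m-2\}$ but in distinct pairs $(2i-1, 2i)$ and $(2i'-1, 2i')$ with $i \neq i'$. In each such subcase one must carefully confirm, using the bijectivity of $y$ and the strictness of the original inequalities, that replacing the values $2m-1, 2m$ at positions $a,b$ by $2m, 2m-1$ cannot break the strict inequality at any affected $(2j-1, 2j)$ pair.
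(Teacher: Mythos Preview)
Your plan is correct and follows essentially the same route as the paper's proof: stratify involutions by the behaviour at the ``extra'' pair, cancel matched contributions via conjugation by the corresponding transposition, and isolate the exceptional orbits that give the $(m-1)$-term. The one difference worth noting is that the paper, invoking Lemma~\ref{wlog}, places the extra transposition at $(n-1,n)$ rather than $(2m-1,2m)$; because $n-1$ and $n$ are then the two largest values, the case analysis you flag as the main obstacle collapses almost immediately (for example $z(2j-1)=n$ already contradicts the $j$-th inequality, so several subcases are vacuous), and in particular your third stratum---where the partner $c$ can lie in $\{1,\ldots,2m-2\}$, so the preservation of the earlier inequalities is not quite as ``clear'' as you suggest---needs essentially no separate check.
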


At the opposite end of the spectrum to Corollary~\ref{thm:highval}  we have those $w \in W$ for which $|\Delta_1(w)| = 1$. We call such $w$ \emph{pendant elements}. Let $\P(W)$ denote the set of pendant elements of $W$. Our next theorem classifies the pendant elements of finite irreducible Coxeter groups. For the notation employed in this theorem and the labelling of the relevant Dynkin diagrams, see Section~\ref{sec:background}.

\begin{thm}\label{thm:pendantelts} Let $W$ be a finite irreducible Coxeter group of rank $n \geq 2$ with \linebreak
	$R =\{ r_1, r_2, \ldots, r_n \}$.
\begin{enumerate}[label=\rm{(\roman*)}]
		\item Suppose $w_0\in \ZZ(W)$. (This occurs if and only if $W$ is of type $\B{n}$, type $\D{n}$ with $n$ even, type $\F{4}$,  $\H{3}$, $\H{4}$, $\mathrm{E}_7$ or $\mathrm{E}_8$, or type $\mathrm{I}_2(m)$ with $m$ even.) Then $\P(W) = \{ w_0r \; | \; r \in R \}$.  
		\item If $W$ is of type $\A{n}$ and $n \geq 2$,  setting $J =  \{1,2,\ldots, \lceil \frac{n}{2} \rceil\}$ we have 
		                 \[ \P(W) = \big\{w_0[ i \nearrow (n+1-i)] ,w_0 [(n+1-i) \searrow i]  \; | \; i \in J\big\}.\]
		\item If $W$ is of type $\D{n}$ where $n $ is odd, then
		               \[ \P(W) = \big\{ w_0r_i,  w_0r_nr_{n-2}r_{n-1},   w_0r_{n-1}r_{n-2}r_n \; | \; i \in \{1,  \dots , n - 2\} \big\}.\]
	    \item If $W$ is of type $\mathrm{E}_6$, then  
	    \[\P(W) = \{w_0r_2, w_0r_4, w_0r_5r_4r_3, w_0r_3r_4r_5, w_0r_6r_5r_4r_3r_1,  w_0r_1r_3r_4r_5r_6 \}.\]
	    	\item If $W$ is of type $ \mathrm{I}_2(m)$,  with $m$ odd and $m \geq 5$,  then $\P(W) = \{w_0r_1r_2, w_0r_2r_1 \}$.  	          
\end{enumerate}
\end{thm}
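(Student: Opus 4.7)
My approach has three stages: reduce to a descent-set analysis, verify pendancy of each listed element via a length formula, and prove uniqueness using a second-neighbour construction.

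\paragraph{Reduction.} For any $w \in \I(W)$ and $r \in R$ with $r \neq w$ and $\ell(wr) > \ell(w)$, we have $\ell(wr) = \ell(w) + 1$, so $r \in \Delta_1(w)$. Hence any pendant $w$ is either a simple reflection (forcing $|R| = 2$, handled by (v) and the $\mathrm{I}_2(m)$-subcase of (i) by direct inspection), or $D_R(w) = R \setminus \{r\}$ for a unique $r \in R$, which is then the unique neighbour; in particular, no involution of length $\geq 2$ is a neighbour of $w$.

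\paragraph{Existence.} In case (i), $w_0 \in \ZZ(W)$, so $w_0 r \in \I(W)$ and for any $y \in \I(W)$,
\[ \ell((w_0 r)y) = \ell(w_0 \cdot ry) = \ell(w_0) - \ell(ry); \]
equating this with the required $\ell(w_0) - 1 + \ell(y)$ gives $\ell(ry) = 1 - \ell(y)$, hence $\ell(y) = 1$ and $y = r$. For (ii)--(iv), each listed pendant is $w_0 u$ for a reduced product $u$ of simple reflections forming a path in the Dynkin diagram with $\phi(u) = u^{-1}$, where $\phi\colon r \mapsto w_0 r w_0$ is the diagram automorphism (e.g.\ $u = r_i r_{i+1} \cdots r_{n+1-i}$ in type $\A{n}$, with analogous path-products in $\D{n}$ (odd $n$) and $\mathrm{E}_6$). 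Then $(w_0 u)^2 = \phi(u) u = u^{-1}u = 1$, and equating $\ell((w_0 u) y) = \ell(w_0) - \ell(uy)$ with the pendant condition gives $\ell(uy) = \ell(u) - \ell(y)$; since no two consecutive factors of $u$ commute, its reduced expression is unique, and the only involution that is a right suffix of this expression is the single simple reflection at the right end, yielding the unique neighbour.

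\paragraph{Uniqueness and main obstacle.} Conversely, any pendant $w$ with $D_R(w) = R \setminus \{r\}$ can be written $w = w_0 u$ with $u := w_0 w$, where $\phi(u) = u^{-1}$ (as $w$ is an involution) and $D_R(u) = \{r\}$ (by the standard symmetry $D_R(w_0 v) = R \setminus D_R(v)$). If in addition $w$ is $\phi$-fixed (equivalently, $w$ commutes with $w_0$), then $ww_0 \in \I(W)$ because $(ww_0)^2 = w\phi(w) = w\cdot w = 1$, and $\ell(w \cdot ww_0) = \ell(w_0) = \ell(w) + \ell(ww_0)$ shows $ww_0 \in \Delta_1(w)$; pendancy then forces $ww_0 = r$, whence $w = w_0 r$ with $r$ a $\phi$-fixed simple reflection. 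This yields case (i) (where $\phi$ is trivial) and the ``middle'' pendants in (ii)--(iv). For the remaining, non-$\phi$-fixed candidates, one classifies $u \in W$ with $\phi(u) = u^{-1}$ and $D_R(u) = \{r\}$ via distinguished coset representatives of the maximal parabolic $W_{R \setminus \{r\}}$ (Lemma~\ref{lem:cosetreps}), matches them to the listed path-products, and rules out remaining candidates by exhibiting length-$2$ involution neighbours (e.g.\ products of two commuting simple reflections both outside $D_R(w)$). The main obstacle is completing this case analysis for $\mathrm{E}_6$ (whose maximal parabolics include one of type $\mathrm{D}_5$) and for $\D{n}$ with odd $n$ (where $\phi$ interchanges the branching reflections $r_{n-1}$ and $r_n$); both are finite computations that can be cross-checked against the \textsc{Magma} framework used for Table~\ref{table:tabA}.
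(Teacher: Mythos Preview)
Your reduction and existence steps, and the $\phi$-fixed uniqueness argument, match the paper's approach: writing a pendant as $w_0u$ and using the equivalence $y\in\Delta_1(w_0u)\iff \ell(uy)=\ell(u)-\ell(y)$ is exactly Lemma~\ref{lem:equiv}, and your treatment of the $\phi$-fixed case reproduces Lemma~\ref{lem:pendantw0central}.

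The genuine gap is in your uniqueness argument for the non-$\phi$-fixed candidates. You propose to eliminate spurious $w=w_0u$ by ``exhibiting length-$2$ involution neighbours (e.g.\ products of two commuting simple reflections both outside $D_R(w)$)''. But your own reduction has already forced $D_R(w)=R\setminus\{r\}$, so there is only \emph{one} simple reflection outside $D_R(w)$; equivalently, via Lemma~\ref{lem:equiv}, any neighbour $y$ is a right weak-order suffix of $u$, and since $D_R(u)=\{r\}$ a commuting product $r_ar_b$ would require $r_a=r_b=r$. Thus no length-$2$ involution can ever be a neighbour of \emph{any} candidate surviving your reduction, and the proposed mechanism rules out nothing. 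The paper instead manufactures longer involution suffixes of $u$: in type $\A{n}$ elements such as $[a,a{-}1,a{+}1,a]$ or $[a,a{\pm}1,a]$ (Lemma~\ref{lem:updown}, combined with the classification of sequential $u$ with $u^{w_0}=u^{-1}$ in Lemma~\ref{lem:cont}), and in type $\D{n}$ a substantial case analysis yielding neighbours like $[n,n{-}3,n{-}1]^{[n-2]}$ or $[n,a{-}1,n{-}1]^{[n-1\searrow a]}$ (Lemmas~\ref{lem:uv}--\ref{lem:Dnatleast2}, relying on the coset structure of Theorem~\ref{Dncosetreps}). Relatedly, $\D{n}$ with $n$ odd is an infinite family, not a ``finite computation'' amenable to \textsc{Magma}; only $\mathrm{E}_6$ is handled that way (Lemma~\ref{E6}).
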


\begin{cor} \label{cor:lwn} If $W$ is a finite irreducible Coxeter group of rank $n$, then $|\P(W)| = n$.
\end{cor}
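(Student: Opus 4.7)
The plan is to deduce the corollary directly from Theorem \ref{thm:pendantelts} by summing the cardinalities of the listed sets in cases (i)--(v) and checking that each equals $n$. Since $w_0$ is invertible, $|\P(W)|$ equals the number of distinct expressions appearing after $w_0$ in the theorem's lists, so the task reduces to verifying distinctness of these stripped expressions.

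Cases (i), (iv), and (v) are straightforward: case (i) gives $|R| = n$ via the injection $r \mapsto w_0 r$; case (iv) lists six explicit expressions in $\mathrm{E}_6$ of stripped lengths $1, 1, 3, 3, 5, 5$, with within-length pairs being inverses of non-involutions (Coxeter elements of $\A{3}$- and $\A{5}$-type parabolic subgroups, respectively, hence of order $4$ and $6$); and case (v) gives $w_0 r_1 r_2$ and $w_0 r_2 r_1$, distinct since $r_1 r_2$ has order $m \geq 5$. In case (ii) I would split on the parity of $n$: when $n$ is even, the equation $i = n+1-i$ has no solution in $J$, so each $i$ contributes two distinct elements and the total is $2|J| = n$; when $n$ is odd, the single value $i = (n+1)/2 \in J$ makes both expressions collapse to $r_{(n+1)/2}$ and contributes only one element, while the remaining $(n-1)/2$ values contribute two each, giving $1 + (n-1) = n$. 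Distinctness across different $i$ is forced by the length of $[i \nearrow (n+1-i)]$, and for fixed $i < (n+1)/2$ the two expressions $[i \nearrow (n+1-i)]$ and $[(n+1-i) \searrow i]$ are distinct because their ratio is the square of a Coxeter element of a type-$\A{}$ parabolic of rank at least $2$, hence of order at least $3$.

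For case (iii), the listed set contains $n-2$ expressions $w_0 r_i$ of stripped length $1$ together with the two length-$3$ expressions $w_0 r_n r_{n-2} r_{n-1}$ and $w_0 r_{n-1} r_{n-2} r_n$; the latter two are inverses of each other and distinct because $r_n$ commutes with both $r_{n-2}$ and $r_{n-1}$ in $\D{n}$, so $(r_n r_{n-2} r_{n-1})^2 = (r_{n-2} r_{n-1})^2 \neq 1$ (as $r_{n-2} r_{n-1}$ has order $3$). This yields $n$ in total. No step presents a substantive obstacle: the only care required is in confirming distinctness, which in each case reduces either to a length comparison after stripping $w_0$, or to a short computation of the order of a Coxeter element in a rank-$2$ or rank-$3$ parabolic subgroup.
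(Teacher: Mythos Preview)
Your approach matches the paper's: Corollary~\ref{cor:lwn} is deduced directly from Theorem~\ref{thm:pendantelts} by counting the listed elements in each case. The paper's own proof is terse, simply citing the lemmas that establish the theorem, while you spell out the distinctness verifications explicitly; the parity split in case~(ii) and the length-stratification arguments are all correct.

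There is, however, an error in your treatment of case~(iii). In the paper's labelling of $\D{n}$ (Figure~\ref{DynkinFig}), the node $r_{n-2}$ is the branch point, so $r_n$ does \emph{not} commute with $r_{n-2}$; among $r_{n-2},r_{n-1},r_n$ it commutes only with $r_{n-1}$. Your claimed identity $(r_n r_{n-2} r_{n-1})^2 = (r_{n-2} r_{n-1})^2$ is therefore false (a direct computation gives $(r_n r_{n-2} r_{n-1})^2 = r_{n-2} r_n r_{n-1} r_{n-2}$ instead). The conclusion you want is nonetheless correct and is obtained by the very argument you used for the length-$3$ pair in case~(iv): the parabolic subgroup $\langle r_{n-2}, r_{n-1}, r_n\rangle$ is of type $\A{3}$ with $r_{n-2}$ as the middle node, so $r_n r_{n-2} r_{n-1}$ is a Coxeter element of order~$4$ and hence not equal to its inverse $r_{n-1} r_{n-2} r_n$.
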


Observe that the pendant elements in Theorem \ref{thm:pendantelts} have the form $w_0x$ where $x^{w_0}=x^{-1}$. These elements are examples of twisted involutions. In general, for $w \mapsto w^{\ast}$ a self-inverse automorphism on $W$ preserving $R$, we can form a twisted Coxeter system $(W,R,\ast)$ with a set of \emph{twisted involutions} $\I_{\ast}=\{w \in W \; | \; w^{\ast}=w^{-1}\}$. These twisted systems have many interesting properties, see for example \cite{Hansson}, \cite{Hultman} and \cite{Marberg}. \medskip

We end this section with an overview of this paper. Section~\ref{sec:background} sets up notation for the paper, and also reviews some well known results that we will employ. Not surprisingly, these results focus on properties of the length function.

Section~\ref{sec:diam} looks at the connectedness of $\E(W)$, first proving Theorem~\ref{thm:diam}, and then determining the various possible diameters of $\Eh(W)$.  For example, in Theorem~\ref{thm:finitediameter} we see that the diameter is $3$ in the case when either $W$ is a compact hyperbolic Coxeter group or an affine Coxeter group (other than $W(\widetilde{A}_1)$), while Lemmas~\ref{lem:infinitered} and \ref{infiniteredred} examine some classes of infinite Coxeter groups which have diameter $2$. 

Valencies of particular elements of a Coxeter group are the subject of Section~\ref{sec:val}. The proof of Theorem~\ref{thm:samecard} is relatively quick, in contrast to the proof of Theorem~\ref{thm:valency}, which determines $|\Delta_1(x)|$ where $x$ is a product of commuting fundamental reflections, and $W$ is a Coxeter group of type $\A{n}$. First, in Lemma~\ref{wlog}, we show  that (perhaps surprisingly) two conjugate involutions of $\Sym(n)$ which have minimal length in their conjugacy class also have the same valency. Then the recursive formula displayed in Theorem~\ref{thm:valency} is established. 

Pendant elements in finite irreducible Coxeter groups $W$ are classified in Section~\ref{sec:pendant}, when the proofs of Theorem~\ref{thm:pendantelts} and Corollary~\ref{cor:lwn} are given. When $w_0 \in \ZZ(W)$ the pendant elements are quickly located (see Lemma~\ref{lem:pendantw0central}). The case of $w_0 \notin \ZZ(W)$ requires a more involved analysis. Lemma~\ref{lem:pendantAn} settles the case when $W$ is of type $\A{n}$, while Subsection~\ref{subsec:pendantDn} is devoted to dealing with $W$ of type $\D{n}$ with $n$ odd. 

Our last section gives the valency distribution for four of the small exceptional finite Coxeter groups.

\section*{Acknowledgements}  This work was begun during a visit to the Mathematisches Forschungsinstitut Oberwolfach, as part of their Oberwolfach Research Fellows program, and completed with funding from the Manchester Institute of Mathematical Sciences.
The second author was funded by the Heilbronn Institute for Mathematical Research.

\section{Background material}\label{sec:background}

Here we take a quick trip through some basic material and notation relating to Coxeter groups which will be needed later. A good general reference for these and other results is \cite{Humph}. Recall that $W$ being a Coxeter group of finite rank $n$ means that it has a finite set $R = \{r_1, \dots, r_n \}$ of fundamental (or simple) reflections such that 
\[W = \langle R \; | \; (r_ir_j)^{m_{ij}} = 1 \;  \text{for all } r_i, r_j \in R \rangle \]
is a presentation for $W$ with $m_{ii} = 1$ and  $m_{ij} \geq 2$ for $i \not= j$. As noted in Section~\ref{sec:intro} we have the length function $\ell$. There is an alternative description of $\ell(w)$ which we now review. Letting $V$ be a vector space over $\mathbb{\R}$ with basis $\Pi=\{\alpha_{r} \; | \; r \in \R\}$ we define  the following bilinear form on $V$:
\[\langle \alpha_{r}, \alpha_{s} \rangle = \begin{cases} -\textrm{cos}(\pi/m_{rs}) & \text{ if } m_{rs}<\infty,\\
-1 & \text{ if } m_{rs}=\infty.\end{cases}\]
We can now define a faithful action of $W$ on $V$ which preserves the bilinear form. For $r \in \R$ and $v \in V$ let 
\[r \cdot v = v-2 \langle v, \alpha_r \rangle \alpha_r.\]
The {\em root system} of $W$ is
$\Phi=\{w \cdot \alpha_r \; | \; w \in W , r \in \R\}$.
The {\em positive roots} are $\Phi^+=\{\sum_{r \in \R} \lambda_r \alpha_r  \in \Phi \; | \; \lambda_r \geq 0 \text{ for all } r \in \R\}$, and the {\em negative roots} are $\Phi^-=-\Phi^+$. Thus $\Phi = \Phi^+ \dot{\cup} \Phi^-$. We define some important subsets of $\Phi^+$. For $w \in W$ let 
\[N(w) = \{\alpha \in \Phi^+ \; | \; w \cdot \alpha \in \Phi^-\},\]
and so $|N(w)|$ is the number of positive roots taken negative by $w$.  A key result on the length function in a Coxeter group is our first lemma.

\begin{lemma}[{{\cite[\S 5.2]{Humph}}}]\label{lem:pos/neg} For $w \in W$ and $r \in R$, either $\ell(wr) = \ell(w) -1$ or
$\ell(wr) = \ell(w) +1$.
\end{lemma}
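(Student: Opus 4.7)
The plan is to establish the lemma via a two-step strategy: first show $|\ell(wr) - \ell(w)| \leq 1$ from the triangle inequality, then rule out equality by a parity argument. Together these force $\ell(wr) \in \{\ell(w)-1, \ell(w)+1\}$, which is exactly what is claimed.

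For the upper bound, I would take a reduced expression $w = r_{i_1} r_{i_2} \cdots r_{i_k}$ with $k = \ell(w)$. Appending $r$ gives $wr = r_{i_1} \cdots r_{i_k} r$, an expression for $wr$ of length $k+1$; hence $\ell(wr) \leq \ell(w) + 1$. Applying the same reasoning to the pair $(wr, r)$ instead, $\ell(w) = \ell((wr)r) \leq \ell(wr) + 1$, so $\ell(wr) \geq \ell(w) - 1$. Thus $\ell(wr) - \ell(w) \in \{-1, 0, 1\}$.

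The remaining task is to eliminate the possibility $\ell(wr) = \ell(w)$. For this I would invoke the sign (or determinant) homomorphism. Define $\varepsilon \colon W \to \{\pm 1\}$ on the generating set by $\varepsilon(r_i) = -1$ for each $r_i \in R$. This extends to a well-defined group homomorphism, since every Coxeter relation $(r_i r_j)^{m_{ij}} = 1$ is respected: $((-1)(-1))^{m_{ij}} = 1$. For any expression $w = r_{i_1} \cdots r_{i_k}$ of $w$ as a product of fundamental reflections, one has $\varepsilon(w) = (-1)^k$; specialising to a reduced expression gives $\varepsilon(w) = (-1)^{\ell(w)}$. Then
\[
(-1)^{\ell(wr)} = \varepsilon(wr) = \varepsilon(w)\varepsilon(r) = -(-1)^{\ell(w)},
\]
so $\ell(wr)$ and $\ell(w)$ have opposite parities, and in particular $\ell(wr) \neq \ell(w)$. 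Combined with the first step, this yields $\ell(wr) = \ell(w) \pm 1$, as required.

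The only conceptual content here is the existence of the sign homomorphism, and this is a one-line verification from the Coxeter presentation: each defining relator, being an even power of generators, maps to $1$ in $\{\pm 1\}$. There is no real obstacle; the argument is essentially bookkeeping once one has the presentation and the subadditivity $\ell(w_1 w_2) \leq \ell(w_1) + \ell(w_2)$ in hand.
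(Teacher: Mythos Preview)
Your proof is correct and is essentially the standard argument found in Humphreys. Note that the paper does not supply its own proof of this lemma; it simply cites \cite[\S 5.2]{Humph}, where precisely this sign-character argument is used.
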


The close relationship between $\ell(w)$ and $N(w)$ is apparent in the next result.

\begin{lemma} Suppose that $W$ is a Coxeter group with $x,y \in W$. Then the following hold.
\begin{enumerate}[label=\rm{(\roman*)}]
\item $\ell(x)=|N(x)|$
\item $N(xy) =\big[ N(y) \setminus \left(-y^{-1}N(x)\right)\big] \dot\cup y^{-1}\left(N(x)\setminus N(y^{-1})\right)$
\item $\ell(xy) = \ell(x) + \ell(y) - 2|N(x)\cap N(y^{-1})|$
\end{enumerate}
\end{lemma}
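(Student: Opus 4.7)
The plan is to prove the three parts in sequence, with part (i) providing the bridge between algebra and root-system combinatorics that the rest relies on, part (ii) carrying the main combinatorial content, and part (iii) following as a counting corollary of (ii).

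For part (i), I would proceed by induction on $\ell(x)$. The base case $x=1$ is immediate since $N(1)=\emptyset$. For the inductive step, choose a reduced expression $x = x' r$ with $r \in R$ and $\ell(x') = \ell(x)-1$, so by induction $|N(x')|=\ell(x')$. The geometric input required is the standard fact that for each $s \in R$ the reflection $s$ permutes $\Phi^{+}\setminus \{\alpha_s\}$, which comes from the formula $s\cdot v = v - 2\langle v, \alpha_s\rangle \alpha_s$ together with the fact that a positive root has all its coefficients in $\Pi$ of the same sign. Using this and Lemma~\ref{lem:pos/neg}, one verifies
\[ N(x) \;=\; r\cdot N(x') \;\dot\cup\; \{\alpha_r\}, \]
after checking that $\alpha_r\notin N(x')$ (forced by $\ell(x'r)>\ell(x')$, which gives $x'\cdot\alpha_r\in\Phi^+$). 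Taking cardinalities yields $|N(x)| = |N(x')|+1 = \ell(x)$.

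For part (ii), I would argue directly from the definition $N(w)=\{\alpha\in\Phi^+:w\cdot\alpha\in\Phi^-\}$. Fix $\alpha\in\Phi^+$ and split on the sign of $y\cdot\alpha$. If $y\alpha\in\Phi^{-}$ then $\alpha\in N(y)$, and $\alpha\in N(xy)$ iff $x\cdot(y\alpha)\in\Phi^-$; the positive root $-y\alpha$ makes this equivalent to $-y\alpha\notin N(x)$, i.e.\ $\alpha\notin -y^{-1}N(x)$, so these $\alpha$ contribute exactly $N(y)\setminus(-y^{-1}N(x))$. If instead $y\alpha\in\Phi^+$ then $\alpha\notin N(y)$, and $\alpha\in N(xy)$ iff $y\alpha\in N(x)$; setting $\beta=y\alpha$, we have $\beta\in N(x)$ with $y^{-1}\beta=\alpha\in\Phi^+$, so $\beta\notin N(y^{-1})$, giving the contribution $y^{-1}(N(x)\setminus N(y^{-1}))$. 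The two pieces are disjoint since the first lies in $N(y)$ while the second is mapped into $\Phi^+$ by $y$. I expect this case analysis to be the most error-prone step, as it involves simultaneously tracking positivity and membership in several $N$-sets.

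For part (iii), I would take cardinalities in (ii) and exhibit a bijection $\alpha\mapsto -y\alpha$ between $N(y)\cap(-y^{-1}N(x))$ and $N(x)\cap N(y^{-1})$: for $\alpha$ in the former, $-y\alpha$ is positive (as $y\alpha<0$), lies in $N(x)$, and is sent by $y^{-1}$ to $-\alpha<0$, so $-y\alpha\in N(y^{-1})$; the inverse sends $\beta$ to $-y^{-1}\beta$. Combining with (i) yields
\[ \ell(xy) \;=\; |N(x)|+|N(y)|-2|N(x)\cap N(y^{-1})| \;=\; \ell(x)+\ell(y)-2|N(x)\cap N(y^{-1})|, \]
which is (iii).
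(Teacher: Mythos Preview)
Your proposal is correct. The paper itself does not give an argument: it cites Humphreys for (i), cites \cite{invol} for (ii), and simply asserts that (iii) follows from (ii). What you have written is an explicit proof of the cited facts, and it is the standard one. Your case split on the sign of $y\cdot\alpha$ in (ii) is exactly how such formulas are established, and your bijection $\alpha\mapsto -y\alpha$ for (iii) is the natural way to turn the disjoint-union decomposition into the length identity. There is no gap; in effect you have supplied the content that the paper outsources to references.
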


\begin{proof} For (i) and (ii) see, respectively,  \cite[§5.6 Proposition(b)]{Humph} and \cite[Lemma 2.2]{invol}. Part (iii) follows from (ii).
\end{proof}

Next we have the so-called Exchange Condition. Recall that an expression for $w$ of the form $w=r_1r_2\cdots r_k$  with $r_i \in R$ is called \emph{reduced} if $k=\ell(w)$.

\begin{lemma}[{{\cite[\S 5.8]{Humph}}}]\label{lem:ex}Let $w = r_1\cdots r_k$ (not necessarily reduced), where each $r_i$ is a simple reflection. If $\ell(wr) < \ell(w)$ for some simple reflection $r$, then there exists an index $i$ for which $wr = r_1 \cdots \hat{r_i}\cdots r_k$ (and thus $w = r_1 \cdots \hat{r_i} \cdots r_k r$, with a factor $r$ exchanged for a factor $r_i$).
In particular, $w$ has a reduced expression ending in $r$ if and only if $\ell(wr) < \ell(w)$.
\end{lemma}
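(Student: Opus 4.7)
The plan is to translate the length hypothesis into root-system language and then track the positivity of $\alpha_r$ along the partial products of $r_1r_2\cdots r_k$. The essential geometric input is the standard fact that for any simple reflection $s \in R$, $s\cdot\alpha_s = -\alpha_s$ while $s$ permutes the remaining positive roots; equivalently, for $\beta \in \Phi^+$, the relation $s\cdot\beta \in \Phi^-$ forces $\beta = \alpha_s$. This follows directly from the definition $s\cdot v = v - 2\ip{v}{\alpha_s}\alpha_s$ and the sign rules for the nonnegative coefficients in the basis $\Pi$. Applying part (iii) of the preceding lemma with $y=r$, together with the observation $N(r) = \{\alpha_r\}$, converts the hypothesis $\ell(wr) < \ell(w)$ into the statement $\alpha_r \in N(w)$, i.e.\ $w\cdot\alpha_r \in \Phi^-$.

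The heart of the argument is a sign-change observation. Define the sequence of roots $\beta_j := (r_{j+1}r_{j+2}\cdots r_k)\cdot\alpha_r$ for $j=0,1,\ldots,k$, so that $\beta_k = \alpha_r \in \Phi^+$ while $\beta_0 = w\cdot\alpha_r \in \Phi^-$. There must then exist an index $i \in \{1,\ldots,k\}$ with $\beta_{i-1} \in \Phi^-$ and $\beta_i \in \Phi^+$; for such an $i$, we have $r_i\cdot\beta_i = \beta_{i-1} \in \Phi^-$, and the key fact above forces $\beta_i = \alpha_{r_i}$. Hence $(r_{i+1}\cdots r_k)\cdot\alpha_r = \alpha_{r_i}$, which gives the conjugation identity $r_i = (r_{i+1}\cdots r_k)\,r\,(r_k\cdots r_{i+1})$. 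Multiplying on the right by $r_{i+1}\cdots r_k$ yields $r_i r_{i+1}\cdots r_k = (r_{i+1}\cdots r_k)\cdot r$, and substituting this into $w = r_1\cdots r_k$ and using $r^2 = 1$ produces $wr = r_1\cdots r_{i-1}r_{i+1}\cdots r_k$, the claimed exchange.

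The ``in particular'' clause then follows quickly. If $w$ admits a reduced expression ending in $r$, say $w = s_1\cdots s_{\ell(w)-1}r$, then $wr = s_1\cdots s_{\ell(w)-1}$ has length at most $\ell(w)-1$. Conversely, if $\ell(wr) < \ell(w)$, apply the exchange to any reduced expression $w = r_1\cdots r_k$ with $k = \ell(w)$ to obtain $wr = r_1\cdots\hat{r_i}\cdots r_k$, so that $w = r_1\cdots\hat{r_i}\cdots r_k\cdot r$ is an expression of length $\ell(w)$ for $w$ ending in $r$, necessarily reduced. The only real obstacle is the root-theoretic lemma used to pinpoint the exchange index; once that is in hand, the rest is clean bookkeeping with no deeper input beyond the geometric representation.
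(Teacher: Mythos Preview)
The paper does not supply its own proof of this lemma; it is quoted directly from Humphreys \cite[\S 5.8]{Humph} as a known result. Your argument is correct and is exactly the standard root-system proof given in that reference: translate $\ell(wr)<\ell(w)$ into $w\cdot\alpha_r\in\Phi^-$, locate the index where the partial images of $\alpha_r$ change sign, and use the fact that a simple reflection sends only its own simple root negative to identify the exchanged factor.
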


As a consequence of the above, when $W$ is finite, $w_0$ has a reduced expression ending in $r$ for all $r \in \R$.
 
For $J \subseteq \R$ we define the corresponding {\em standard parabolic} subgroup to be $W_J = \langle J \rangle$. Then $W_J$ is a Coxeter group with root system 
\[\Phi_J = \{ w \cdot \alpha_r \; | \; r \in J, \; w \in W_J\}.\]

Suppose $J\subseteq K \subseteq R$. Adapting the notation from \cite[§2.1]{GeckPfeiffer} we define the following sets
\begin{align*} X^K_J &= \{w \in W_K \; | \; \ell(sw) > \ell(w)\;  \mbox{for all} \; s \in J \}\\
^K_JX &= \{w \in W_K \; | \; \ell(ws) > \ell(w)\;  \mbox{for all} \; s \in J \}
\end{align*}
We write $X_J$ for $X^R_J$ and $_JX$ for $^R_JX$. Our next result was alluded to in Section~\ref{sec:intro}.

\begin{lemma}[{{\cite[ Propostion 2.1.1] {GeckPfeiffer}}}] \label{lem:cosetreps} Suppose that $W$ is a Coxeter group and $J \subseteq \R$. Then the following hold.
\begin{enumerate}[label=\rm{(\roman*)}]
\item For each $w \in W$ there is a unique $y \in W_J$ and $x \in X_J$ such that  $w = yx$.  Furthermore, $\ell(yx) = \ell(x) + \ell(y)$. In particular, $X_J$ is a complete set of right coset representatives of $W_J$ in $W$.
\item The set $\{x^{-1} \; | \; x \in X_J \}$ equals $_JX$, and $_JX$ is a complete set of left coset representatives of $W_J$ in $W$.
\end{enumerate}
\end{lemma}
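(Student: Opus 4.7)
The plan is to prove part (i) by the standard minimal-length-in-coset argument, and then derive part (ii) by taking inverses and using $\ell(g)=\ell(g^{-1})$.

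For part (i), I would first establish the following characterisation: an element $x \in W$ lies in $X_J$ if and only if $x$ has minimal length in its coset $W_J x$. One direction is immediate from Lemma~\ref{lem:pos/neg}: if $x$ is of minimal length in $W_J x$, then for any $s \in J$ we have $sx \in W_J x$, so $\ell(sx)\geq \ell(x)$, and by Lemma~\ref{lem:pos/neg} the only remaining option is $\ell(sx)=\ell(x)+1>\ell(x)$. Thus $x \in X_J$. The converse will follow from the length formula proved next.

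Existence of the decomposition is then easy: given $w \in W$, pick $x$ to be of minimum length in the coset $W_J w$ and set $y=wx^{-1}\in W_J$; by the previous paragraph $x \in X_J$. The key technical step is the length formula $\ell(yx)=\ell(y)+\ell(x)$ for $y\in W_J$, $x\in X_J$. I would prove this by induction on $\ell(y)$. The base case $y=1$ is trivial. For the inductive step, write $y=sy'$ in reduced form with $s\in J$ and $\ell(y')=\ell(y)-1$. Then $\ell(y'x)=\ell(y')+\ell(x)$ by induction, and it suffices to show $\ell(sy'x)=\ell(y'x)+1$. If instead $\ell(sy'x)=\ell(y'x)-1$, Lemma~\ref{lem:ex} (Exchange) applied to the (possibly non-reduced) expression for $sy'x$ obtained from a reduced expression for $y'x$ would let us delete a letter, producing an expression of length $\ell(y)+\ell(x)-2$ for $y'x$ or for another element of $W_J x$ shorter than $x$; in either case we contradict either the inductive hypothesis or the minimality of $\ell(x)$ in $W_J x$. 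This is the main obstacle and requires careful bookkeeping about whether the exchanged letter comes from the $W_J$-part or the $X_J$-part of the product; the point is that if the deleted letter is in the $y'$-part, then one produces a shorter element of $W_J$ whose product with $x$ equals $sy'x$, and the tail cancellation in $X_J$ is ruled out by $x \in X_J$. This also completes the characterisation above, since if $x \in X_J$ and $x'=yx$ with $y\in W_J$, the length formula gives $\ell(x')\geq \ell(x)$.

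Uniqueness of $(y,x)$, equivalently the fact that each coset $W_J w$ meets $X_J$ in at most one element, now follows quickly: if $x,x' \in X_J$ lie in the same coset, write $x'=yx$ and $x=y^{-1}x'$ with $y\in W_J$; applying the length formula both ways gives $\ell(x')=\ell(y)+\ell(x)$ and $\ell(x)=\ell(y)+\ell(x')$, forcing $\ell(y)=0$ and hence $x=x'$. This proves (i) in full, and the final sentence of (i) follows because distinct cosets of $W_J$ give distinct elements of $X_J$ by uniqueness, while every coset meets $X_J$ by existence.

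For part (ii), observe that $w \mapsto w^{-1}$ is a length-preserving bijection of $W$, and that $\ell(ws)>\ell(w)$ is equivalent to $\ell(sw^{-1})>\ell(w^{-1})$. Therefore $x \in X_J \iff x^{-1} \in {}_JX$, so $\{x^{-1}\mid x\in X_J\}={}_JX$. Inverting the decomposition $w=yx$ from part (i) gives $w^{-1}=x^{-1}y^{-1}$ with $x^{-1}\in{}_JX$ and $y^{-1}\in W_J$, and since $w^{-1}$ ranges over $W$ as $w$ does, ${}_JX$ is a complete set of left coset representatives of $W_J$ in $W$. The length formula $\ell(x^{-1}y^{-1})=\ell(x^{-1})+\ell(y^{-1})$ is just the inverse of the one from (i).
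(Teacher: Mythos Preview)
The paper does not give its own proof of this lemma; it is quoted directly from Geck--Pfeiffer \cite[Proposition~2.1.1]{GeckPfeiffer}. So there is no ``paper's approach'' to compare against, and your proposal stands or falls on its own.

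Your overall strategy is the standard one and part~(ii) is handled correctly. There is, however, a small circularity in your treatment of the length formula in~(i). You state the formula for arbitrary $x\in X_J$ and, in the inductive step, when the exchanged letter lies in the $x$-part you appeal to ``the minimality of $\ell(x)$ in $W_Jx$''. But at that stage you have only shown the implication \emph{minimal $\Rightarrow$ $X_J$}; the converse is what you intend to deduce \emph{from} the length formula. Concretely, if the deleted letter is $t_j$, exchange gives $sy'x=y'\,(t_1\cdots\hat t_j\cdots t_q)$, producing an element of $W_Jx$ of length at most $\ell(x)-1$; this is a contradiction only once you know $x$ is minimal in its coset, which you have not yet established for a general $x\in X_J$.

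The fix is painless: prove the length formula first for the specific minimal-length representative you chose in the existence step (where the minimality appeal is legitimate), then use it to show that any $x\in X_J$ must equal that minimal representative (if $x=y'x'$ with $x'$ minimal and $y'\neq 1$, pick $s\in J$ with $\ell(sy')<\ell(y')$ and apply the formula to get $\ell(sx)<\ell(x)$, contradicting $x\in X_J$). Uniqueness and the general length formula then follow. Alternatively, you can avoid the case split altogether by inducting on $\ell(w)$: if $w\notin X_J$ pick $s\in J$ with $\ell(sw)<\ell(w)$, decompose $sw=y'x$ by induction, and set $y=sy'$; a length count forces $\ell(y)=\ell(y')+1$.
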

 We call $X_J$ the set of \emph{distinguished right coset representatives}, and $_JX$ the set of \emph{distinguished left coset representatives}, of $W_J$ in $W$. \smallskip

When investigating finite Coxeter groups of types $\A{n}, \D{n}$ and $\mathrm{E}_6$ in Section~\ref{sec:pendant}, we assume their Dynkin diagrams are labelled as in Figure \ref{DynkinFig}.
\begin{figure}[h!bt]
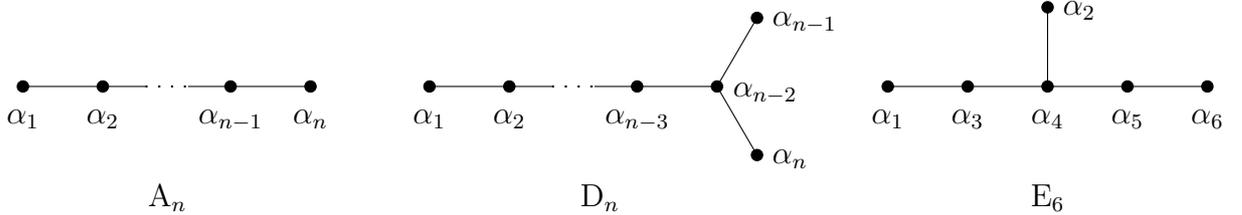

	\begin{tabular}{ccccc}
		$\begin{dynkinDiagram}
			[text style/.style={scale=1}, 
			scale=1.5, 
			edge length=0.7cm, 
			indefinite edge/.style={thick,loosely dotted},
			labels={\alpha_{1},\alpha_{2},\alpha_{n-1},\alpha_n} ]A{}  \end{dynkinDiagram} $
		
		&&
		$\begin{dynkinDiagram}
			[text style/.style={scale=1}, 
			scale=1.5, 
			edge length=0.7cm, 
			%label directions={,,,right,,},
			indefinite edge/.style={thick,loosely dotted},
			labels={\alpha_1,\alpha_2,\alpha_{n-3},,\alpha_{n-1},\alpha_{n}} ]D{} \end{dynkinDiagram} \hspace{-1.5cm} \alpha_{n-2}$
		
		&&
		$\begin{dynkinDiagram}
			[text style/.style={scale=1}, 
			scale=1.5, 
			edge length=0.7cm,  labels={\alpha_1,\alpha_2,\alpha_3,\alpha_4,\alpha_5,\alpha_6}, ]E{6}\end{dynkinDiagram}$
		\\
		$\A{n}$ && $\D{n}$ && $\mathrm{E}_6$
	\end{tabular}\caption{Labelling of Dynkin diagrams} \label{DynkinFig}
\end{figure}
\medskip

Sometimes we shall write $[i_1,i_2, \dots , i_{j-1},i_j]$ to stand for the product \[r_{i_1}r_{i_2} \cdots r_{i_{j-1}}r_{i_j}\] where  $r_{i_1,}r_{i_2}, \dots, r_{i_{j-1}},r_{i_j} \in R$. In some circumstances we compress this notation yet further: when $i \leq j$ we let 
$[ i \nearrow j]=r_ir_{i+1} \cdots r_{j-1}r_j$ and $ [j  \searrow i]=r_jr_{j-1} \cdots r_{i+1}r_i$. \smallskip

Our next theorem is used when pinning down the pendant elements in Coxeter groups of type $\D{n}$. We use the labelling convention as in Figure \ref{DynkinFig}.

\begin{thm}\label{Dncosetreps} Suppose that $W$ is a Coxeter group of type $\D{n}$. Let $J\coloneqq R \setminus \{ [n] \}$ and $x \in X_J$ a non-identity element. Then $x$ has a reduced expression $x = ab$ where either
\begin{enumerate}[label=\rm{(\roman*)}]
\item $a \in \{ [n],[n,n-2,n-1]\}$ and $b \in W_{\{1,\ldots, n-2\}}$; or
\item $a = [n,n-2,n-3,n-1,n-2,n]$.
\end{enumerate}
\end{thm}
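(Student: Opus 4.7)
The plan is to analyse the possible leading letters of reduced expressions for $x \in X_J$, using commutation and braid relations of the $\D{n}$ diagram. Because $\ell(sx) > \ell(x)$ for every $s \in J$, no reduced expression of $x$ starts with an element of $J$; hence every reduced expression begins with $r_n$, the only generator outside $J$. If $\ell(x) \geq 2$, the second letter must be $r_{n-2}$: all other simple reflections commute with $r_n$ (since $r_{n-2}$ is its only diagram-neighbour), so any other second letter $s \in J$ could be swapped with $r_n$ to give a reduced expression starting with $s$, a contradiction. This disposes of $\ell(x) \leq 2$ via case (i) with $a=[n]$.

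For $\ell(x) \geq 3$, I would write $x = [n, n-2]\cdot x''$ and analyse the first letter of $x''$. Commutations rule out $r_i$ for $i \leq n - 4$; non-reducedness rules out $r_{n-2}$; and the braid $r_n r_{n-2} r_n = r_{n-2} r_n r_{n-2}$ rules out $r_n$, leaving $r_{n-1}$ (Case A) or $r_{n-3}$ (Case B). In Case A, $x = [n, n-2, n-1]\cdot y$, I plan to argue by induction on $\ell(x)$ that $y \in W_{\{1,\ldots,n-2\}}$, which gives case (i) with $a = [n, n-2, n-1]$. The tail $y$ cannot begin with $r_n$, $r_{n-1}$, or $r_{n-2}$ (each yields, via braid and commutation, an expression starting in $J$ or a non-reduced expression); if it begins with $r_i$ for $i \leq n-4$, commutation past $r_{n-1}$ reduces to a shorter element handled by induction; if it begins with $r_{n-3}$, commuting $r_{n-3}$ with $r_{n-1}$ rewrites $x$ into the Case B form, handled there.

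Case B, $x = [n, n-2, n-3] \cdot y$, is handled by iterating the analysis. The fourth letter is restricted to $r_{n-1}$ or $r_{n-4}$ by the same methods. The $r_{n-4}$ branch commutes with $r_n$ and $r_{n-2}$, yielding the rewrite $x = [n]\cdot ([n-2, n-3, n-4] \cdot y')$, which places us in case (i) with $a = [n]$ after inductively checking that the tail lies in $W_{\{1,\ldots,n-2\}}$. In the $r_{n-1}$ branch I would continue to the fifth and sixth letters: repeated braid/commutation analysis restricts the fifth letter to $\{r_{n-2}, r_{n-4}\}$ and the sixth to $\{r_n, r_{n-4}\}$, so that either a further commutation places $x$ in one of the case (i) forms with $a \in \{[n], [n, n-2, n-1]\}$, or the reduced expression accumulates the prefix $[n, n-2, n-3, n-1, n-2, n]$ required by case (ii), after which $b$ is unconstrained.

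I expect the main obstacle to be the intricate bookkeeping in Case B: tracing up to six letters deep requires composing several braid and commutation moves (especially the key braid $r_n r_{n-2} r_n = r_{n-2} r_n r_{n-2}$, paired with the braids involving $r_{n-2}$ and its other diagram-neighbours $r_{n-3}$ and $r_{n-1}$), and at each step one must verify that no alternative rearrangement of the reduced expression produces a leading letter in $J$, which would contradict $x \in X_J$. Induction on $\ell(x)$ closes the recursion in both Cases A and B, with the $\ell(x) \leq 2$ analysis supplying the base.
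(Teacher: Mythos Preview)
Your approach is genuinely different from the paper's. The paper argues by induction on $n$: it sets $K = R\setminus\{r_1\}$ so that $W_K$ has type $\D{n-1}$, computes $X_K$ explicitly, and uses the Mackey decomposition $X_J = \bigcup_{d} d\,X^K_{J^d\cap K}$ (with only two double-coset representatives $d$) to reduce everything to the inductive hypothesis for $\D{n-1}$. This gives a clean structural reduction of rank rather than any analysis of individual reduced words.

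Your letter-by-letter analysis of leading letters is a plausible alternative, but the induction on $\ell(x)$ as you have set it up has a real gap. When you peel letters from the \emph{left}---passing from $x$ to the tail $y$ in $x = [n,n-2,n-1]\cdot y$, say---the element $y$ is not in $X_J$, so your inductive hypothesis does not apply to it. Concretely, your Case~A asserts $y\in W_{\{1,\ldots,n-2\}}$, but in $\D 4$ the element $x = [4,2,1,3,2,4]\in X_J$ has the reduced expression $[4,2,3]\cdot[1,2,4]$, with tail $[1,2,4]\notin W_{\{1,2\}}$; this $x$ falls under case~(ii), not (i). What your case analysis actually yields is only a constraint on the \emph{first} letter of $y$; to conclude you must iterate, and the required depth grows with $\ell(x)$ (up to $n(n-1)/2$), not with the fixed bound of six you suggest. (A smaller slip: in Case~A the subcase $r_i$ with $i\le n-4$ yields an outright contradiction---$r_i$ commutes past all three prefix letters to the front---not a shorter instance for induction.)

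A length induction \emph{can} be made to work if you peel from the right instead: $X_J$ is closed under weak-order prefixes, so if $x = x'r$ is reduced then $x'\in X_J$, and one may apply the hypothesis to $x'$ and track how appending $r$ moves among the three forms. That, however, is a different argument from the one you sketched, and the boundary between forms (i) and (ii) still needs careful handling. The paper's rank induction via Mackey avoids all of this bookkeeping.
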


\begin{proof} We argue by induction on $n$, starting with $n = 4$. Employing Algorithm B described in \cite[\S 2.1]{GeckPfeiffer} gives 
\[X_J = \{ 1, [4],[4,2],[4,2,1],[4,2,3], [4,2,1,3],[4,2,1,3,2],[4,2,1,3,2,4] \}.\]
Bearing in mind that $[1]$ and $[3]$ commute we see that either alternative (i) holds or $x = [4,2,1,3,2,4]$, which gives case (ii).  Now assume that $n \geq 5$.  Set $K = R \setminus \{ [1] \}$ and $L = R \setminus \{[1], [n-1] \}$. Then $W_K$ is of type $\D{n-1}$, with $W_L$ a standard parabolic subgroup of $W_K$ of type $\A{n-2}$. Let $x \in X_J$.\smallskip

 Appealing to the Mackey decomposition \cite[Lemma 2.1.9]{GeckPfeiffer},
\[ X_J = \bigcup_{d \in X_J \cap (X_{K}) ^{-1}} dX^{K}_{J^d \cap K}.\]
Again using \cite[Algorithm B, \S 2.1]{GeckPfeiffer} 
\[ X_K = \left\{ 
1,[1], [1 \nearrow 2], \dots, [1 \nearrow n], [1 \nearrow n-2,n], 
[1 \nearrow n,n-2], \dots ,[1 \nearrow n,n-2 \searrow 1] 
\right\}\]
and hence $X_J \cap (X_{K}) ^{-1} = \{d_1 = 1, d_2 = [n,n-2 \searrow 1] \}$. Then we see that $J^{d_2} \cap K = L$ and hence 
\[ X_J = X^K_{J\cap K} \cup d_2 X^K_L.\]
If $x \in X^K_{J \cap K}$, then, as $W_K$ is of type $\D{n-1}$ (with $[n-1], [n]$ in $X^K_{J \cap K}$),  induction yields that either (i) or (ii) hold. Now suppose that $x \in d_2X^K_L.$ Therefore $x = d_2e$ for some $e \in X^K_L$.  Again applying induction, this time to $X^K_L$, we deduce that we have a reduced expression $e=ab$ where either $a \in \{[n-1], [n-1,n-2,n] \}$ with $b$ having no appearances of $[n-1]$ nor $[n]$, or $a = [n-1,n-2,n,n-3,n-2,n-1]$. Assume the first possibility holds, so $a \in \{[n-1], [n-1,n-2,n] \}$. If $a = [n-1]$, then
\begin{align*}
x&=[n,n-2 \searrow 1,n-1]b\\
&=[n,n-2,n-1][n-3 \searrow 1]b,
\end{align*}
which is of the desired form. Meanwhile if $a=[n-1,n-2,n]$, then
\begin{align*}
x &=[n,n-2\searrow 1,n-1,n-2,n]b\\
&=[n,n-2,n-3,n-1,n-2,n][n-4\searrow 1]b
\end{align*}
which is alternative (ii).
In the remaining case we have
\begin{align*}
x 
&=[n,n-2,n-3,n-4 \searrow 1,n-1,n-2,n,n-3,n-2,n-1]b\\
&=[n,n-2,n-3,n-1,n-2,n][n-4 \searrow 1, n-3,n-2,n-1]b
\end{align*}
again yielding alternative (ii), and this completes the proof of the theorem.
\end{proof}

\section{Diameter of $\Eh(W)$}\label{sec:diam}
Recall that $W$ is a Coxeter group with set of fundamental reflections $R$, and non-identity involution set $\I(W)$. The subgraph $\Eh(W)$ of the excess graph $\E(W)$ is defined by taking the connected component which does not contain $w_0$.
We use $\diam(\Eh(W))$ to denote the diameter of $\Eh(W)$.

\begin{lemma}\label{lem:fundneighbour} Let $x \in \I(W)$. Either $x$ is adjacent in $\E(W)$ to some $r \in R$, or $W$ is finite and $x=w_0$.
\end{lemma}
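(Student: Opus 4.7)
The plan is to prove the contrapositive: if $x \in \I(W)$ is not adjacent to any $r \in R$, then $W$ is finite and $x = w_0$. First I would translate the hypothesis into a statement about the set $N(x)$. Since $\ell(r) = 1$, adjacency of $x$ and $r$ amounts to $\ell(xr) = \ell(x) + 1$; by Lemma \ref{lem:pos/neg} the only alternative is $\ell(xr) = \ell(x) - 1$. Using the formula $\ell(xr) = \ell(x) + 1 - 2|N(x) \cap N(r)|$ (from part (iii) of the unlabeled lemma in Section \ref{sec:background}) together with $N(r) = \{\alpha_r\}$, the alternative $\ell(xr) = \ell(x) - 1$ is precisely the condition $\alpha_r \in N(x)$. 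So the hypothesis becomes the inclusion $\Pi \subseteq N(x)$.

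Next I would upgrade this inclusion to the equality $N(x) = \Phi^+$. Given any $\gamma \in \Phi^+$, write $\gamma = \sum_{r \in R} c_r \alpha_r$ with all $c_r \geq 0$. By hypothesis each $x \cdot \alpha_r$ lies in $\Phi^-$, so has only non-positive coordinates in the basis $\Pi$. Consequently $x \cdot \gamma = \sum_{r} c_r (x \cdot \alpha_r)$ also has only non-positive coordinates in $\Pi$. Since $x \cdot \gamma$ is a nonzero element of $\Phi = \Phi^+ \, \dot{\cup} \, \Phi^-$ and $\Phi^+$ consists of non-negative combinations of $\Pi$, we must have $x \cdot \gamma \in \Phi^-$, i.e., $\gamma \in N(x)$. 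Thus $\Phi^+ \subseteq N(x)$, giving $N(x) = \Phi^+$.

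The finish is then immediate: using $\ell(x) = |N(x)|$ we get $\ell(x) = |\Phi^+|$. As $\ell(x)$ is a finite integer, $\Phi^+$ must be finite, i.e., $W$ is finite. Then $\ell(x) = |\Phi^+| = \ell(w_0)$, and uniqueness of the longest element in a finite Coxeter group forces $x = w_0$. The only non-routine step is the promotion from $\Pi \subseteq N(x)$ to $N(x) = \Phi^+$; this rests solely on the fact that positive roots are non-negative combinations of simple roots, and everything else is bookkeeping with the length function and the set $N(\cdot)$.
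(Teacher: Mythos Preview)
Your proof is correct and follows essentially the same route as the paper's: both deduce $\Pi\subseteq N(x)$ from non-adjacency, promote this to $\Phi^+\subseteq N(x)$ via the non-negative expansion of positive roots in simple roots, and then use finiteness of $|N(x)|=\ell(x)$ to force $W$ finite and $x=w_0$. The only cosmetic difference is that the paper phrases the finiteness step as a contradiction (infinite $\Phi^+$ versus finite $N(x)$) rather than via $\ell(x)=|\Phi^+|=\ell(w_0)$, but the content is identical.
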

\begin{proof}
	Assume, for a contradiction, that $x$ is not adjacent to any $r$ in $\R$.
	Then $\ell(xr) = \ell(x) - 1$, and so $\alpha_r \in N(x)$ for all $r \in \R$. Since 
	$\Phi^+ = \{\sum_{r \in \R} \lambda_r\alpha_r \in \Phi \; |\; \lambda_r \geq 0 { \text{ for all $r$ in $R$}}\}$, it follows that $\Phi^+\subseteq N(x)$. If $W$ is infinite, then $\Phi^+$ is infinite, whereas $N(x)$ is finite, a contradiction. Therefore $W$ is finite, and now $\Phi^+\subseteq N(x)$ forces $\Phi^+ = N(x)$, which implies $x = w_0$.
\end{proof}

\begin{proof}[Proof of Theorem~\ref{thm:diam}]
If $W$ has a longest element $w_0$ (that is, if $W$ is finite), observe that $\ell(xw_0) = \ell(x) + \ell(w_0)$ only when $x = 1$, and so $\{w_0\}$ is a connected component of $\E(W)$. Let $x$ be any element of $\I(W)$ other than (where it exists) $w_0$. By Lemma~\ref{lem:fundneighbour}, $x$ is adjacent to some $r \in R$. If $r$ and $s$ are distinct elements of $\R$, then $\ell(rs) = 2 = \ell(r) + \ell(s)$ and so $r$ and $s$ are adjacent. Hence
$\Eh(W)$ is connected with diameter at most 3.
\end{proof}

The next results allow us to determine the diameter of $\Eh(W)$ for several classes of Coxeter groups, including finite and affine Coxeter groups.

\begin{lemma}\label{lem:maxparafinite} Let $W$ be a Coxeter group of finite rank at least 3. If there exist distinct fundamental reflections $r$ and $s$ such that $W_{\R \backslash \{r\}}$ and $W_{\R \backslash \{s\}}$ are both finite, then the diameter of $\Eh(W)$ is 3. 
\end{lemma}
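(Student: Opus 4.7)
The upper bound $\diam(\Eh(W))\le 3$ is free from Theorem~\ref{thm:diam}(ii), so the task is to exhibit two involutions at distance exactly~$3$. Both $W_{R\setminus\{r\}}$ and $W_{R\setminus\{s\}}$ are finite, so they possess longest elements; denote these by $w_r$ and $w_s$. Each is a non-identity involution, and a standard fact about longest elements of finite Coxeter groups gives
\[N(w_r)=\Phi_{R\setminus\{r\}}\cap\Phi^+\qquad\text{and}\qquad N(w_s)=\Phi_{R\setminus\{s\}}\cap\Phi^+.\]
In particular $\alpha_r\notin N(w_r)$, so even when $W$ is finite neither $w_r$ nor $w_s$ coincides with $w_0$, and both therefore lie in $\Eh(W)$. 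Since $r\ne s$, the two $N$-sets differ and $w_r\ne w_s$. The plan is to show that $d(w_r,w_s)=3$.

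The engine of the argument is the length formula $\ell(xy)=\ell(x)+\ell(y)-2|N(x)\cap N(y^{-1})|$ from the preliminary lemma in Section~\ref{sec:background}, which for two involutions $x,y$ specialises to the criterion that $x$ and $y$ are adjacent in $\E(W)$ if and only if $N(x)\cap N(y)=\emptyset$. Under this criterion, $w_r$ and $w_s$ are non-adjacent, because $N(w_r)\cap N(w_s)=\Phi_{R\setminus\{r,s\}}\cap\Phi^+$ contains $\alpha_t$ for any $t\in R\setminus\{r,s\}$, and such a $t$ exists once $W$ has rank at least $3$.

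To rule out a common neighbour, suppose $y\in\I(W)$ satisfied $N(y)\cap N(w_r)=\emptyset=N(y)\cap N(w_s)$. Then $N(y)$ would be disjoint from $(\Phi_{R\setminus\{r\}}\cup\Phi_{R\setminus\{s\}})\cap\Phi^+$. However, taking the final letter $u$ of any reduced expression for $y$ gives $\ell(yu)<\ell(y)$, and the standard characterisation of right descents (cf.\ Lemma~\ref{lem:pos/neg}) then forces $\alpha_u\in N(y)$. Since $\alpha_u$ lies in $\Phi_{R\setminus\{r\}}\cap\Phi^+$ unless $u=r$ and in $\Phi_{R\setminus\{s\}}\cap\Phi^+$ unless $u=s$, and $r\ne s$, at least one alternative must hold; this contradiction rules out a common neighbour and, combined with Theorem~\ref{thm:diam}(ii), yields $\diam(\Eh(W))=3$.

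I expect no serious obstacle; the entire argument runs on the disjoint-$N$-set criterion, and the only delicate choice is identifying $w_r$ and $w_s$ as the right candidates---a choice directly suggested by the hypothesis that the two maximal parabolics are finite.
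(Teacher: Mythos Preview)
Your proof is correct and follows essentially the same approach as the paper: both arguments use the longest elements of $W_{R\setminus\{r\}}$ and $W_{R\setminus\{s\}}$ as the two vertices at distance~$3$, and both rule out a common neighbour by showing such a neighbour would need a reduced expression starting simultaneously with $r$ and with $s$. The only cosmetic difference is that you phrase the adjacency criterion in terms of disjoint $N$-sets while the paper phrases it via reduced expressions and the Exchange Condition.
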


\begin{proof}
	Let $X=W_{\R \backslash \{r\}}$ and $Y=W_{\R \backslash \{s\}}$ with longest elements $x$ and $y$ respectively. Then $x, y \in \I(W)$. 
	For all $t \in R \backslash \{r\}$, it follows from the maximal length of $x$ that $\ell(xt)<\ell(x)$. Hence, if $z$ is adjacent to $x$ in $\Eh(W)$, then every reduced expression for $z$ starts with $r$. 
	Similarly, if $z$ is adjacent to $y$ in $\Eh(W)$, every reduced expression for $z$ starts with $s$. It follows that $x$ and $y$ have no common neighbours. Finally, let $t\in R\setminus \{r,s\}$.  
	By the Exchange Condition, there exists a reduced expression for $y$ starting in $t$, and so $x$ and $y$ are not adjacent. Hence, the diameter of $\Eh(W)$ is 3.  
\end{proof}

\begin{lemma}\label{infinitediameter}
	Let $W$ be an infinite Coxeter group. Then $\diam(\Eh(W)) \in \{2,3\}$. If there exists $r \in \R$ such that $m_{rs} = \infty$ for all $s \in \R \backslash \{r\}$, or if $W$ has infinite rank, then $\diam(\Eh(W))=2$.
\end{lemma}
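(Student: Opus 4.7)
By Theorem~\ref{thm:diam}(ii) we have $\diam(\Eh(W)) \leq 3$. For the lower bound $\diam \geq 2$, since $W$ is infinite there must exist $r, s \in R$ with $m_{rs} \geq 3$---otherwise all $m_{rs}$ are $2$ and $W$ is a product of rank-one subgroups, hence finite. Then $r$ and $rsr$ are non-identity involutions, and $\ell(r \cdot rsr) = \ell(sr) = 2 \neq 4 = \ell(r)+\ell(rsr)$, so $r$ and $rsr$ are non-adjacent in $\Eh(W)$. This gives $\diam(\Eh(W)) \in \{2, 3\}$.

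For each of the sufficient conditions, the strategy is to produce, for any $x, y \in \I(W)$, a common neighbour in $\Eh(W)$. In the infinite-rank case I would choose $t \in R \setminus (\supp{x} \cup \supp{y})$, which is non-empty because $R$ is infinite while the supports are finite. Since $N(w) \subseteq \Phi_{\supp{w}}^+$ for every $w \in W$, neither $N(x)$ nor $N(y)$ contains $\alpha_t$, so $\ell(tx) = 1+\ell(x)$ and $\ell(ty) = 1+\ell(y)$; moreover $t \neq x, y$ because $\alpha_t \in N(t)$. Hence $t$ is a common neighbour.

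For the case where some $r \in R$ satisfies $m_{rs} = \infty$ for all $s \in R \setminus \{r\}$, the driving idea is a braid-rigidity lemma: a braid move between $r$ and another generator $s$ requires a subword of length $m_{rs}$, which is infinite, so no braid move can introduce or remove $r$ as the terminal letter of a reduced expression. Consequently, for $x \in \I(W)$, $\alpha_r \in N(x)$ holds if and only if every reduced expression for $x$ ends in $r$; applying the same reasoning to $x = x^{-1}$ shows it also starts in $r$. In particular $\alpha_s \notin N(x)$ for every $s \in R \setminus \{r\}$, and so every such $s$ is a neighbour of $x$ in $\Eh(W)$.

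I would then case-split on whether $\alpha_r \in N(x)$ and $\alpha_r \in N(y)$. If $\alpha_r \notin N(x) \cup N(y)$, then $r$ itself is a common neighbour. If $\alpha_r \in N(x) \cap N(y)$, any $s \in R \setminus \{r\}$ (which exists since $\rank(W) \geq 2$, as $W$ is infinite) is a common neighbour by the braid-rigidity lemma. If $\alpha_r \in N(x)$, $\alpha_r \notin N(y)$, and some $s \in R \setminus \{r\}$ satisfies $\alpha_s \notin N(y)$, then that $s$ is common to $x$ (by the lemma) and $y$ (by choice). The remaining sub-case---$\alpha_r \in N(x)$, $\alpha_r \notin N(y)$, and $\alpha_s \in N(y)$ for every $s \neq r$---is the main obstacle, and my plan is to show that $x$ and $y$ are then directly adjacent, i.e.\ $N(x) \cap N(y) = \emptyset$. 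The key intermediate claim is that every $\beta \in N(x)$ has a strictly positive coefficient of $\alpha_r$; this follows by tracking the inversion sequence $\beta_j = s_k s_{k-1}\cdots s_{j+1} \alpha_{s_j}$ for any reduced expression of $x$, using that a simple reflection $s \neq r$ preserves the $\alpha_r$-coefficient of any root while $r$ transforms it by $c_r \mapsto 2\sum_{t \neq r} c_t - c_r$, and observing that the boundary case $c_r = 2\sum_{t\neq r} c_t$ cannot arise because it would force an illegal repeated $r$ in the reduced expression for $x$. Together with a finer analysis of the structure of $N(y)$ in this configuration, this yields the required disjointness and completes the argument that $d(x,y) \leq 2$.
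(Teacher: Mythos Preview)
Your lower-bound argument contains a small slip: the implication ``$W$ infinite $\Rightarrow$ some $m_{rs}\geq 3$'' fails when $W$ has infinite rank with all generators commuting (then $W\cong\bigoplus_{\mathbb N}\mathbb Z/2\mathbb Z$ is infinite yet every $m_{rs}=2$). The paper simply treats both possibilities for a pair $r,s\in R$: if they commute use the involution $rs$, otherwise use $rsr$, and in either case obtain a vertex not adjacent to $r$. Your infinite-rank argument and your braid-rigidity observation are both correct and match the paper.

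The real gap is in your handling of the mixed case for the hypothesis $m_{rs}=\infty$ for all $s\neq r$. Your extra subdivision is unnecessary, and the ``remaining sub-case'' is not proved: even if every $\beta\in N(x)$ has strictly positive $\alpha_r$-coefficient, this does not by itself separate $N(x)$ from $N(y)$, since $N(y)$ may also contain roots with positive $\alpha_r$-coefficient (for instance $y=srs$ in $W(\widetilde A_1)$ has $\alpha_r+2\alpha_s\in N(y)$). The ``finer analysis of $N(y)$'' you promise is precisely the missing content. The paper sidesteps all of this by proving directly that in the \emph{whole} mixed case (one of $x,y$ has its reduced expressions ending in $r$ and the other does not) the two involutions are already adjacent. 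Concatenate reduced words $s_1\cdots s_k$ for $x$ (so $s_k=r$) and $t_1\cdots t_m$ for $y$ (so $t_1\neq r$). Because no braid relation involves $r$, braid moves cannot move an $r$ past a neighbouring letter; hence each maximal block of non-$r$ letters in the concatenation lies entirely inside one of the two original reduced words (the block immediately after $s_k=r$ is a prefix of $t_1\cdots t_m$) and is therefore reduced in $W_{R\setminus\{r\}}$, and no $rr$ occurs. Thus the concatenated word is reduced and $\ell(xy)=\ell(x)+\ell(y)$. This single word-property argument replaces your entire root-coefficient programme.
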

\begin{proof}
	Since $W$ is infinite, $W$ has rank at least 2, and so there exist distinct $r, s$ in $R$. By Theorem~\ref{thm:diam},	$\Eh(W)$ is connected with diameter at most 3. If $r$ commutes with $s$, then $rs$ is an involution in $\Eh(W)$ which is not adjacent to $r$. Otherwise, $rsr$ is an involution in $\Eh(W)$ which is not adjacent to $r$. Therefore $\diam(\Eh(W))\geq 2$. Hence $\diam(\Eh(W)) \in \{2,3\}$. 
	
	Now, suppose there exists $r \in \R$ such that $m_{rs} = \infty$ for all $s \in \R \backslash \{r\}$. For any element $w$ of $W$, if $w$ has a reduced expression ending in $r$, then all reduced expressions for $w$ end in $r$. To see this, recall that any reduced expression for $w$ can be obtained from any other by use of braid relations, and there are no braid relations involving $r$, so there is no braid relation that will replace the $r$ at the end of $w$ with a different fundamental reflection. Moreover, if $w$ is an involution with a reduced expression ending in $r$, then since $w^{-1} = w$, every reduced expression for $w$ also begins with $r$. Let $x$ and $y$ be involutions and let $s\in \R\setminus\{r\}$. If both $x$ and $y$ have reduced expressions ending with $r$, then these reduced expressions also begin with $r$, and all reduced expressions for $x$ and for $y$ both begin and end with $r$. Therefore, both $x$ and $y$ are adjacent to $s$ and are thus distance at most 2 apart. Meanwhile, if neither $x$ nor $y$ have a reduced expression beginning with $r$, then both $x$ and $y$ are adjacent to $r$. If $x$ has a reduced expression ending in $r$ and $y$ does not (or vice versa), then $\ell(xy) = \ell(x) + \ell(y)$, because in order for any cancellation to occur, at least one braid relation involving $r$ would have to be used, in order to juxtapose and cancel any pairs of fundamental reflections from the expressions for $x$ and $y$. Therefore $x$ and $y$ are adjacent. Hence, $\diam(\Eh(W)) = 2$. 
	
	The final case  to consider is where $W$ has infinite rank. Since any element of $W$ has finite length, any reduced expression involves at most finitely many fundamental reflections. Thus, for any involutions $x$ and $y$ there is a fundamental reflection $r$ that does not appear in any reduced expression for $x$ or $y$. Clearly, $x$ and $y$ are both adjacent to $r$ in $\Eh(W)$, and so, again, $\diam(\Eh(W)) = 2$.
\end{proof}

Recall that an irreducible Coxeter group $W$ is compact hyperbolic if it is neither finite nor affine, but all proper parabolic subgroups of $W$ are finite. 

\begin{thm}\label{thm:finitediameter} Let $W$ be a Coxeter group of rank at least 2.
	\begin{enumerate}
		\item[(i)] If $W$ be a finite Coxeter group, then
	\[\diam(\Eh(W)) = \begin{cases} 
	1 & \text{ if } W=W(\A{2}) \text{ or } W(\A{1}\times \A{1}), \\
	3 & \text{ otherwise.}
	\end{cases}\]
		\item[(ii)] If $W$ is an affine Coxeter group, then
		\[\diam(\Eh(W)) = \begin{cases} 2 & \text{ if } W=W(\widetilde{A}_1),\\
	3 & \text{ otherwise.}
	\end{cases}\]
		\item[(iii)] If $W$ is compact hyperbolic, then $\diam(\Eh(W))=3$. 
	\end{enumerate}
\end{thm}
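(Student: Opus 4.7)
The plan is to use the upper bound $\diam(\Eh(W)) \le 3$ from Theorem~\ref{thm:diam}(ii), so each of (i)--(iii) reduces to pinning down whether the diameter equals $1$, $2$, or $3$. The main tool for establishing diameter $3$ will be Lemma~\ref{lem:maxparafinite}, which applies whenever $W$ has two distinct maximal standard parabolics that are finite; for the exceptional infinite case I will invoke Lemma~\ref{infinitediameter}.

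For part (i), if the rank of $W$ is at least $3$ then every proper standard parabolic of $W$ is finite (since $W$ itself is), so Lemma~\ref{lem:maxparafinite} applies to any choice of two distinct fundamental reflections and yields $\diam(\Eh(W)) = 3$. The remaining finite groups are the rank-$2$ dihedral groups $W = \mathrm{I}_2(m)$, $m \ge 2$. For $m \in \{2,3\}$ (the groups $\A{1}\times \A{1}$ and $\A{2}$) direct inspection gives $\I(W) \setminus \{w_0\} = \{r_1, r_2\}$, and since $\ell(r_1 r_2) = 2$ the graph $\Eh(W)$ is a single edge of diameter $1$. For $m \ge 4$ I plan to exhibit an explicit pair at distance $3$, namely the longest reflections of the two alternating types (other than $w_0$).

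For part (ii), $W(\widetilde{A}_1)$ has only two fundamental reflections with $m_{r_1 r_2} = \infty$, so the hypothesis of Lemma~\ref{infinitediameter} is satisfied and $\diam(\Eh(W)) = 2$. Every other irreducible affine Coxeter group has rank at least $3$, and inspection of the extended Dynkin diagrams shows that deleting any single node yields a finite Coxeter diagram; hence all maximal standard parabolics are finite and Lemma~\ref{lem:maxparafinite} gives diameter $3$. Part (iii) is analogous: a compact hyperbolic Coxeter group has all proper standard parabolics finite by definition, and rank at least $3$ since every irreducible rank-$2$ Coxeter group is either finite or $W(\widetilde{A}_1)$, so Lemma~\ref{lem:maxparafinite} again applies.

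The main obstacle is the dihedral case $\mathrm{I}_2(m)$ with $m \ge 4$, where Lemma~\ref{lem:maxparafinite} does not directly deliver diameter $3$: the longest elements of $\langle r_1\rangle$ and $\langle r_2\rangle$ are just $r_1$ and $r_2$, which are adjacent in $\Eh$. To overcome this I will exploit the explicit bipartite structure of $\Eh(\mathrm{I}_2(m))$. The non-$w_0$ involutions split into the reflections $s_j = (r_1r_2)^{j-1} r_1$ and $t_k = (r_2r_1)^{k-1} r_2$; two reflections of the same type are never adjacent (the product exhibits interior cancellation at the middle juxtaposition $r_i \cdot r_i$), while $s_j$ and $t_k$ are adjacent precisely when the concatenated alternating word is reduced, that is when $2(j+k) - 2 \le m$. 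Taking $u$ to be the longest $s$-type reflection other than $w_0$ and $v$ the longest $t$-type reflection other than $w_0$, this inequality forces the only neighbours of $u$ and $v$ to be $t_1 = r_2$ and $s_1 = r_1$ respectively, yielding $d(u,v) = 3$ and completing the argument when combined with the upper bound.
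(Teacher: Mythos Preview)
Your proof is correct and follows essentially the same approach as the paper: reduce to Lemma~\ref{lem:maxparafinite} when the rank is at least $3$, invoke Lemma~\ref{infinitediameter} for $W(\widetilde A_1)$, and handle the rank-$2$ dihedral groups by exhibiting a pair of long reflections whose unique neighbours are $r_2$ and $r_1$ respectively. The only cosmetic difference is that the paper splits the dihedral case $m\ge 4$ into $m$ even (using $w_0r_1,w_0r_2$) and $m$ odd (using $(r_1r_2)^{(m-3)/2}r_1$ and its mirror), whereas you treat both parities uniformly via the bipartite adjacency criterion $2(j+k)-2\le m$; these are the same elements and the same argument.
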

\begin{proof} 
Suppose that $W$ is finite, affine or compact hyperbolic. If $\rank(W) \geq 3$, then $\diam(\Eh(W))=3$ by Lemma~\ref{lem:maxparafinite}. Hence we may assume that $\rank(W) =2$.
\begin{enumerate}
\item[(i)] 
	Let $W$ be finite of rank 2 with generators $r$ and $s$ such that $o(rs) = m$. If $m=2$, then $W$ is of type $\A{1} \times \A{1}$, and if $m=3$, then $W$ is of type $\A{2}$. In both cases, $\I(W)\setminus\{w_0\} = \{r,s\}$, and $r$ is adjacent to $s$ in $\Eh(W)$. Thus, the diameter is 1. 
	
	Suppose $m > 3$. If $m$ is even, then $w_0$ is central and so $w_0r$ is an involution. Since $N(w_0r) = \Phi^{+}\setminus \{\alpha_r\}$, it follows that $w_0r$ is adjacent only to $r$ in $\Eh(W)$. Similarly, $w_0s$ is adjacent only to $s$. Therefore $w_0r$ and $w_0s$ share no common neighbours and $\diam(\Eh(W)) = 3$. If $m$ is odd, then every involution in $W$ is a reflection. Since $\ell((rs)^{(m-3)/2}r) = m-2$, any involution adjacent to $(rs)^{(m-3)/2}r$ in $\Eh(W)$ has length at most 2. But all reflections have odd length. Thus, $(rs)^{(m-3)/2}r$ is adjacent only to $s$, and similarly $(sr)^{(m-3)/2}s$ is adjacent only to $r$. As before, this implies that $\diam(\Eh(W)) = 3$. 
	\item[(ii)] The only affine Coxeter group with rank less than 3 is $W(\widetilde{A}_1)$, which has Dynkin diagram
	\begin{picture}(20.00,15)(-4,-2)
		\unitlength 1.00mm
		\linethickness{0.4pt}
		\put(6,2){\makebox(0,0)[cc]{$\infty$}}
		\put(0,0){\line(1,0){12}}
		\put(0,0){\circle*{1.5}}
		\put(12,0){\circle*{1.5}}
	\end{picture}\hspace*{0.7cm}. By Lemma~\ref{infinitediameter}, $\diam(\E(W(\widetilde{A}_1))) = 2$.
	\item[(iii)] All compact hyperbolic Coxeter groups have rank at least 3. \qedhere
\end{enumerate} \end{proof}	

\begin{lemma}\label{lem:infinitered}
	Suppose that $W$ is infinite and reducible with $W=W_1\times\cdots\times W_m$, where each $W_i$ is irreducible and $m>1$.		
	\begin{enumerate}[label=\rm{(\roman*)}]	
		\item If $\diam(\Eh(W_i))=2$ for all $i\in \{1,\ldots,m\}$, then  $\diam(\Eh(W))=2$.
		\item If $\diam(\Eh(W_i))=3$ for all $i\in \{1,\ldots,m\}$, then  $\diam(\Eh(W))=3$.
	\end{enumerate}
\end{lemma}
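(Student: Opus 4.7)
The plan is to analyze adjacency and common neighbours coordinate-by-coordinate with respect to the decomposition $W = W_1 \times \cdots \times W_m$. A non-identity involution of $W$ corresponds to a tuple $x = (x_1,\ldots,x_m)$ in which each $x_i$ is an involution of $W_i$ or the identity, with at least one entry non-identity. The length function is additive across components, and the crucial structural observation is that two such tuples $x$ and $y$ are adjacent in $\E(W)$ precisely when $\ell(x_iy_i) = \ell(x_i) + \ell(y_i)$ in every coordinate $i$; this condition is automatic whenever $x_i = 1$ or $y_i = 1$, and otherwise reduces to adjacency of $x_i$ and $y_i$ in $\E(W_i)$. Since $W$ is infinite, $\Eh(W) = \E(W)$.

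For part (ii), Theorem~\ref{thm:diam} supplies the upper bound $\diam(\Eh(W)) \leq 3$, and I would obtain a matching lower bound by direct construction. In each $W_i$ pick $x_i, y_i \in \Eh(W_i)$ realising the diameter, so that $d(x_i, y_i) = 3$, and set $x = (x_1,\ldots,x_m)$ and $y = (y_1,\ldots,y_m)$. Then $x$ and $y$ are non-adjacent in $\E(W)$, since adjacency would force $x_i$ adjacent to $y_i$ in every $\E(W_i)$; moreover they share no common neighbour, for any candidate $z \in \I(W)$ would have each $z_i$ equal to $1$ or to a common neighbour of $x_i, y_i$ in $\E(W_i)$, and the latter is precluded by $d(x_i, y_i) = 3$, forcing $z_i = 1$ for all $i$ and contradicting $z \neq 1$.

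For part (i), the lower bound $\geq 2$ is Lemma~\ref{infinitediameter}. For the matching upper bound, take distinct non-adjacent non-identity involutions $x$ and $y$ of $W$; I aim to produce a common neighbour. Non-adjacency provides a coordinate $i_0$ at which $\ell(x_{i_0}y_{i_0}) < \ell(x_{i_0}) + \ell(y_{i_0})$, so in particular both $x_{i_0}$ and $y_{i_0}$ are non-identity. If $x_{i_0} \neq y_{i_0}$, then they are distinct non-adjacent vertices of $\Eh(W_{i_0})$, hence at distance exactly $2$ by hypothesis, yielding a common neighbour $u$ in $W_{i_0}$. If instead $x_{i_0} = y_{i_0}$, then $W_{i_0}$ must be infinite (since by Theorem~\ref{thm:finitediameter} no finite irreducible Coxeter group realises diameter $2$), so Lemma~\ref{lem:fundneighbour} supplies some $u \in R_{i_0}$ adjacent to $x_{i_0}$. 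In either case the tuple with $u$ in coordinate $i_0$ and the identity in all other coordinates is a non-identity involution adjacent to both $x$ and $y$, and it is distinct from $x$ and $y$ because the absence of self-loops in $\Eh$ forces $u \neq x_{i_0}, y_{i_0}$.

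The main obstacle is the split into sub-cases at the problematic coordinate $i_0$ for part (i), and in particular the collapsed case $x_{i_0} = y_{i_0}$; that sub-case is handled by noting that the hypothesis rules out finite irreducible $W_{i_0}$ and then applying Lemma~\ref{lem:fundneighbour}. After that, everything is routine componentwise unpacking via the structural observation from the first paragraph.
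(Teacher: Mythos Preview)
Your proof is correct and follows essentially the same approach as the paper: coordinatewise analysis, with a common neighbour in part (i) built by placing a suitable $u$ in one coordinate and the identity elsewhere, and the lower bound in part (ii) obtained from a tuple of diameter-realising pairs. Your handling of the degenerate sub-case $x_{i_0} = y_{i_0}$ in part (i) is a little more explicit than the paper's (which absorbs it into the observation that $\diam(\E(W_i)) = 2$ guarantees a neighbour), but the arguments are the same in substance.
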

\begin{proof}
	\begin{enumerate}[label=\rm{(\roman*)}]
		\item By Theorem~\ref{thm:finitediameter}(i), each $W_i$ is infinite, and so $\Eh(W_i)=\E(W_i)$ by Theorem~\ref{thm:diam}. Let $x=(x_1,\ldots,x_m),y=(y_1,\ldots,y_m) \in \I(W)$. If $x_i$ is adjacent to $y_i$ in $\E(W_i)$ for all $i\in \{1,\ldots,m\}$, then $x$ is adjacent to $y$ in $\E(W)$. 
		Hence assume that there exists $i\in \{1,\ldots,m\}$ such that $x_i$ and $y_i$ are not adjacent in $\E(W_i)$. Since $\diam(\E(W_i))=2$, there exists $z_i \in \I(W_i)$ such that $x_i$ and $y_i$ are both adjacent to $z_i$. Hence $x$ and $y$ are both adjacent to the element of $\I(W)$ with $z_i$ in its $i^{\text{th}}$ position and the identity elsewhere.
		
		\item For all $i \in \{1,\ldots,m\}$ there exist $x_i,y_i \in \I(W_i)$ such that $d(x_i,y_i)=3$. Let $x:=(x_1,\ldots,x_m),y:=(y_1,\ldots,y_m) \in \I(W)$. Then clearly $x$ and $y$ are not adjacent. Suppose there exists $z=(z_1,\ldots,z_m) \in \I(W)$ adjacent to both $x$ and $y$. Since $\ell(x_iz_i)\leq \ell(x_i)+\ell(z_i)$, it follows that $\ell(xz)=\ell(x)+\ell(z)$ if and only if $\ell(x_iz_i)= \ell(x_i)+\ell(z_i)$ for all $i\in \{1,\ldots,m\}$. In order for $z$ to be an involution, either $z_i \in \I(W_i)$ or $z_i = 1$. The first is impossible because $d(x_i, y_i) = 3$. Therefore $z_i = 1$ for all $i$, and so $z=1 \notin \I(W)$, a contradiction. Thus $\diam(\Eh(W)) = 3$. \qedhere
	\end{enumerate}
\end{proof}

We note that Lemma~\ref{lem:infinitered} is best possible without knowing more about the irreducible components than just their diameters. For example, suppose there exists $i \in \{1,\ldots, m\}$ with $\diam(\Eh(W_i))=3$. Then $\diam(\Eh(W))=2$ if and only if there exists $j \in \{1,\ldots,m\}$ with both $\diam(\Eh(W_j))=2$ and the additional property that every pair of adjacent involutions in $\Eh(W_j)$ has at least one neighbour in common.
As the final lemma in this section shows, this additional property cannot be guaranteed for all $W_j$ with $\diam(\Eh(W_j))=2$. 

\begin{lemma}\label{infiniteredred}
	For $n \geq 2$, let $W_n$ be the Coxeter group given by $W_n = \langle r_1, \ldots, r_n \; | \; r_1^2 = \cdots = r_n^2=1 \rangle$. Then $\diam(\Eh(W_n)) = 2$. Furthermore, if $n=2$, then no pair of adjacent involutions in $\Eh(W_n)$ has a neighbour in common. If $n>2$, then every pair of adjacent involutions has a neighbour in common.
\end{lemma}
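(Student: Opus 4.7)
The plan is to exploit the fact that $W_n$ is a free Coxeter group: since all braid relations are trivial ($m_{ij} = \infty$ for $i \neq j$), every element of $W_n$ has a unique reduced expression, namely a word in $r_1,\ldots,r_n$ with no two consecutive letters equal. Multiplication is then concatenation followed by iterated cancellation of equal letters at the junction, and the inverse of a reduced word is its reversal. Consequently, an element is an involution if and only if its reduced expression is a palindrome; in particular, any $x \in \I(W_n)$ has the same first and last letter, which I will denote $\tau(x) \in R$.

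Next I would show that for $x,y \in \I(W_n)$ the product $xy$ can only have cancellation between the last letter of $x$ and the first letter of $y$, and that this cancellation is nontrivial precisely when these letters coincide. Hence $\ell(xy) = \ell(x)+\ell(y)$ if and only if $\tau(x) \neq \tau(y)$. This makes $\Eh(W_n)$ a complete multipartite graph with (infinite) parts $V_i = \{x \in \I(W_n) : \tau(x) = r_i\}$ for $i = 1,\ldots,n$; each $V_i$ is nonempty since $r_i \in V_i$.

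From this structural description the three assertions follow routinely. Distinct involutions $x,y$ with $\tau(x) \neq \tau(y)$ are adjacent, so are distance $1$ apart; and if $\tau(x) = \tau(y) = r_i$, any $r_j$ with $j \neq i$ is a common neighbour, giving distance $2$. Hence $\diam(\Eh(W_n)) = 2$. For the common-neighbour claim: an adjacent pair $x \in V_i$, $y \in V_j$ (with $i \neq j$) has a common neighbour $z$ precisely when some $z \in \I(W_n)$ satisfies $\tau(z) \notin \{r_i, r_j\}$; when $n = 2$ there is no such $z$, while for $n \geq 3$ the fundamental reflection $r_k$ with $k \notin \{i,j\}$ works.

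The only subtlety (and main point to justify carefully) is the characterisation of reduced expressions and of adjacency in the free Coxeter group; everything else is then a one-line combinatorial verification. I would either derive uniqueness of reduced expressions from Matsumoto's theorem together with the absence of braid relations, or give a direct inductive argument on length using Lemma~\ref{lem:pos/neg}, and similarly establish the junction-cancellation description of the length of a product. No other obstacle arises.
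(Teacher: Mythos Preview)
Your proposal is correct and takes essentially the same approach as the paper: both use uniqueness of reduced expressions in the free Coxeter group to partition $\I(W_n)$ by the last letter of the reduced word, identify $\Eh(W_n)$ as the complete multipartite graph on these parts, and read off the diameter and common-neighbour statements directly. Your palindrome observation that first and last letters coincide is a nice explicit justification the paper leaves implicit; the only small omission is that you should note each part $V_i$ contains at least two involutions (e.g.\ $r_i$ and $r_i r_j r_i$) so that the diameter is genuinely $2$ and not $1$.
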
 
\begin{proof}
	Every element in $W_n$ has a unique reduced expression, as there are no braid relations. For each $i$ let $X_i$ be the set of involutions in $W_n$ whose reduced expression ends in $r_i$. Then $\I(W_n)$ is the disjoint union $X_1\cup \cdots \cup X_n$ and for all $x$ in $X_i$ we have $\Delta_1(x) = \I(W_n)\setminus X_i$. Thus, involutions $x$ and $y$ are adjacent if and only if $x \in X_i$ and $y\in X_j$ for some $i\neq j$, and the set of mutual neighbours of $x$ and $y$ is $\I(W_n) \setminus (X_i\cup X_j)$. The result now follows immediately.  
\end{proof}

\section{Valencies in $\E(W)$}\label{sec:val}

We begin with the following well-known result.

\begin{lemma}\label{lendown} Suppose that $W$ is a finite rank Coxeter group. Let $r$ be a fundamental reflection and
	$x$  an involution of $W$, and assume that $r$ and $x$ do not commute.  If $x\cdot \alpha_r  \in \Phi^-$, then $\ell(rx) = \ell(xr) = \ell(x) - 1$ and $\ell(rxr) = \ell(x) - 2$. If $x\cdot \alpha_r  \in \Phi^+$, then $\ell(rx) = \ell(xr) = \ell(x) + 1$ and $\ell(rxr) = \ell(x) + 2$.
\end{lemma}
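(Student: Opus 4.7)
The plan is to extract both conclusions from the length formula $\ell(xy)=\ell(x)+\ell(y)-2|N(x)\cap N(y^{-1})|$ together with the basic facts that $N(r)=\{\alpha_r\}$ and that $x$ is an involution (so $N(x)=N(x^{-1})$ and the sign $x\cdot\alpha_r\in\Phi^{\pm}$ is exactly the criterion for $\alpha_r\in N(x)$).

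For the first two equalities I would apply the formula twice: $\ell(xr)=\ell(x)+1-2|N(x)\cap\{\alpha_r\}|$ and $\ell(rx)=1+\ell(x)-2|\{\alpha_r\}\cap N(x)|$. Both collapse to $\ell(x)-1$ when $\alpha_r\in N(x)$, i.e.\ when $x\cdot\alpha_r\in\Phi^-$, and to $\ell(x)+1$ otherwise, which already gives $\ell(rx)=\ell(xr)$ with the stated value in each case.

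For the statement about $\ell(rxr)$ the key observation — and the only non-mechanical step — is that since $x$ is an involution not commuting with $r$, we must have $x\cdot\alpha_r\neq\pm\alpha_r$. This is because $xrx^{-1}$ is the reflection corresponding to the root $x\cdot\alpha_r$, so $x\cdot\alpha_r=\pm\alpha_r$ would force $xrx^{-1}=r$, contradicting the hypothesis. With this in hand I would apply Lemma~\ref{lem:pos/neg} to $rx$ and $r$: the sign of $\ell(rxr)-\ell(rx)$ is determined by whether $\alpha_r\in N(rx)$, equivalently whether $rx\cdot\alpha_r=r\cdot(x\cdot\alpha_r)$ is negative. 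Recalling that $r$ permutes $\Phi^+\setminus\{\alpha_r\}$ and sends $\alpha_r$ to $-\alpha_r$, in the case $x\cdot\alpha_r\in\Phi^-$ we write $x\cdot\alpha_r=-\beta$ with $\beta\in\Phi^+\setminus\{\alpha_r\}$ (using the observation above), so $r\cdot(-\beta)=-r\cdot\beta\in\Phi^-$; thus $\alpha_r\in N(rx)$ and $\ell(rxr)=\ell(rx)-1=\ell(x)-2$. In the case $x\cdot\alpha_r\in\Phi^+$ the same reasoning gives $x\cdot\alpha_r\in\Phi^+\setminus\{\alpha_r\}$, hence $r\cdot(x\cdot\alpha_r)\in\Phi^+$, and therefore $\ell(rxr)=\ell(rx)+1=\ell(x)+2$.

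The main (minor) obstacle is the ruling out of $x\cdot\alpha_r=\pm\alpha_r$ — without it, the final step is ambiguous because $r$ fails to preserve positivity precisely on the line through $\alpha_r$. Everything else is routine bookkeeping with $N(\cdot)$ and the standard action of a simple reflection on $\Phi^+$.
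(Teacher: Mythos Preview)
Your argument is correct and follows essentially the same route as the paper: rule out $x\cdot\alpha_r=\pm\alpha_r$ via the non-commuting hypothesis, then use that $r$ preserves positivity off $\pm\alpha_r$ to determine the sign of $rx\cdot\alpha_r$. The only cosmetic difference is that the paper obtains $\ell(xr)=\ell(rx)$ in one stroke from $(xr)^{-1}=rx$, whereas you compute each separately with the length formula; the $\ell(rxr)$ step is identical.
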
 	
\begin{proof}
	Note that $(xr)^{-1} = r^{-1}x^{-1} = rx$, and so $\ell(xr) = \ell(rx)$.  Assume that $x\cdot \alpha_r  \in \Phi^-$. Then  $\ell(rx) = \ell(xr) = \ell(x) - 1$ by Lemma~\ref{lem:pos/neg}. Now, $\ell(rxr) = \ell(rx) - 1$ if and only if $rx\cdot \alpha_r  \in \Phi^-$. But $rx\cdot \alpha_r = r\cdot (x\cdot \alpha_r)$. Since $\ell(xr) < \ell(x)$, we have that $x\cdot \alpha_r  \in \Phi^-$, so the only way that $r\cdot (x\cdot \alpha_r) \in \Phi^+$ is if $x\cdot \alpha_r = -\alpha_r$. But if that happens, then $x$ commutes with $r$, contrary to our hypothesis. Hence $\ell(rxr) = \ell(x) - 2$. Similarly, if $x\cdot \alpha_r \in \Phi^+$, then the only way that $r\cdot  (x\cdot \alpha_r) \in \Phi^-$ is if $x\cdot \alpha_r = \alpha_r$, which again implies that $x$ commutes with $r$. So $\ell(rxr) = \ell(x) + 2$.
\end{proof}

\begin{lemma}\label{cardinality}
	Suppose that $W$ is a Coxeter group of finite rank, let $r$ be a fundamental reflection of $W$ and write $\overline{\I(W)} = \I(W)\setminus \{r\}$. Then $|\Delta_1(r)|  = |\overline{\I(W)}\setminus \Delta_1(r)|$. In particular, if $W$ is finite, then $|\Delta_1(r)| = \frac{1}{2}(|\I(W)| - 1)$. 
\end{lemma}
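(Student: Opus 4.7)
The plan is to establish the claim by exhibiting an explicit length-preserving involution on $\overline{\I(W)}$ that swaps $\Delta_1(r)$ with its complement, handling separately the involutions that commute with $r$ and those that do not. The starting observation is that for $x \in \overline{\I(W)}$, adjacency to $r$ is equivalent to $\ell(rx)=\ell(x)+1$, i.e.\ $x\cdot\alpha_r\in\Phi^+$ (equivalently, $\alpha_r\notin N(x)$). So I want a bijection between involutions $x\ne r$ with $x\cdot\alpha_r\in\Phi^+$ and those with $x\cdot\alpha_r\in\Phi^-$.

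Let $F\subseteq\overline{\I(W)}$ be the set of involutions that do \emph{not} commute with $r$. On $F$, I would use the map $\phi(x)=rxr$. This is clearly a fixed-point-free involution on $F$ (since $\phi(x)=x$ forces $x$ to commute with $r$, and $\phi(x)=r$ forces $x=1$). Moreover, by Lemma~\ref{lendown}, the length of $x$ and the length of $rxr$ differ by $2$, so one of them has $\ell(rx)=\ell(x)+1$ and the other has $\ell(rx)=\ell(x)-1$. Thus $\phi$ swaps $\Delta_1(r)\cap F$ with $(\overline{\I(W)}\setminus\Delta_1(r))\cap F$.

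For the remaining vertices, let $C\subseteq\overline{\I(W)}$ be the involutions that commute with $r$. For any $x\in C$, we have $x\cdot\alpha_r=\pm\alpha_r$, so either $x\in\Delta_1(r)$ or not. Since $x$ and $r$ commute, $rx$ is also an involution, it commutes with $r$, and $rx\neq r$ (else $x=1$). The key point is that
\[(rx)\cdot\alpha_r=r\cdot(x\cdot\alpha_r)=r\cdot(\pm\alpha_r)=\mp\alpha_r,\]
so the map $\psi\colon C\to C$, $\psi(x)=rx$, is a bijection (it is clearly self-inverse) that swaps $\Delta_1(r)\cap C$ with $(\overline{\I(W)}\setminus\Delta_1(r))\cap C$. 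Combining $\phi$ on $F$ and $\psi$ on $C$ gives the desired bijection on $\overline{\I(W)}=F\sqcup C$, proving $|\Delta_1(r)|=|\overline{\I(W)}\setminus\Delta_1(r)|$.

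For the ``in particular'' claim, assume $W$ is finite. Then $|\overline{\I(W)}|=|\I(W)|-1$, and since the two sets are equal in size and partition $\overline{\I(W)}$, each has size $\tfrac{1}{2}(|\I(W)|-1)$. The only subtlety I anticipate is checking carefully that $\psi$ maps $C$ to itself and that $\phi$ restricts to $F$ (that is, $rxr$ still fails to commute with $r$ when $x$ does); both reduce to the identity ``$rxr$ commutes with $r$ iff $x$ does,'' which is immediate. No single step is really an obstacle — the whole proof is a careful bookkeeping of two very short bijective arguments.
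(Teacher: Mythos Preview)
Your proof is correct and follows essentially the same approach as the paper's: both partition $\overline{\I(W)}$ into involutions commuting with $r$ and those not, use the involution $x\mapsto rxr$ (with Lemma~\ref{lendown}) on the non-commuting part, and the involution $x\mapsto rx\,(=xr)$ on the commuting part. The only cosmetic difference is that you phrase the adjacency condition via $x\cdot\alpha_r\in\Phi^{\pm}$ while the paper works directly with lengths.
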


\begin{proof}
	Let $r $ be a fundamental reflection of $W$. Then $\overline{\I(W)}$ is the disjoint union of two sets: $X = \{x\in \overline{\I(W)} \; | \;  xr = rx\}$ and $Y = \{y\in \overline{\I(W)} \; | \; yr \neq ry\}$. 
	Now, $x \in X$ if and only if $xr\in X$, and $\ell(xr) = \ell(x) + 1$ if and only if $\ell((xr)r) = \ell(x) = \ell(xr) - 1$. Therefore, $X \cap \Delta_1(r)$ and $X \cap (\overline{\I(W)} \setminus \Delta_1(r))$ have the same cardinalities. Similarly, $y\in Y$ if and only if $ryr\in Y$. Moreover, if $\ell(yr) = \ell(y) + 1$, then $\ell(ryr) = \ell(y) + 2$ by Lemma~\ref{lendown}.  So $\ell(r(ryr)) = \ell(yr) = \ell(ryr) - 1$. On the other hand, if $\ell(yr) = \ell(y) - 1$, then, again by Lemma~\ref{lendown}, $\ell(ryr) = \ell(y) - 2$ and so $\ell(r(ryr)) = \ell(ryr) + 1$. Thus $Y \cap \Delta_1(r)$ and $Y \cap (\overline{\I(W)} \setminus \Delta_1(r))$ also have the same cardinality, so proving the lemma.
	\end{proof}

Our next result and its corollary show that when $W$ is finite no involution can have more neighbours than a fundamental reflection. 

\begin{lemma}\label{gregarious} Let $x$ be a vertex of $\Eh(W)$ and suppose $r\in R$ with $l(xr)<l(x)$. Then $\Delta_1(x) \subseteq \Delta_1(r)$. If $W$ is finite, then $\Delta_1(x) = \Delta_1(r)$ if and only if $x=r$.  
\end{lemma}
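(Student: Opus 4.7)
The plan is to split the lemma into two claims: the containment $\Delta_1(x) \subseteq \Delta_1(r)$, which holds for arbitrary $W$, and the equality statement, which requires $W$ to be finite. For the containment, I will use the identity $\ell(xy) = \ell(x) + \ell(y) - 2|N(x) \cap N(y^{-1})|$ recorded in Section~\ref{sec:background}. Given $y \in \Delta_1(x)$, the hypothesis $\ell(xy) = \ell(x) + \ell(y)$ together with $y = y^{-1}$ (since $y$ is an involution) forces $N(x) \cap N(y) = \emptyset$. Applying the same formula to the product $xr$, together with $N(r) = \{\alpha_r\}$, shows that $\ell(xr) < \ell(x)$ is equivalent to $\alpha_r \in N(x)$. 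Therefore $\alpha_r \notin N(y)$, which gives $\ell(yr) = \ell(y) + 1$, and inverting (both $r$ and $y$ are involutions) yields $\ell(ry) = \ell(r) + \ell(y)$; hence $y \in \Delta_1(r)$.

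For the equality statement, assume $W$ is finite. The implication $x = r \Rightarrow \Delta_1(x) = \Delta_1(r)$ is trivial, so suppose $\Delta_1(x) = \Delta_1(r)$. I will combine the first part with Corollary~\ref{thm:highval}, which asserts $|\Delta_1(r)| = \tfrac{1}{2}(|\I(W)|-1)$ and $|\Delta_1(z)| < \tfrac{1}{2}(|\I(W)|-1)$ for every $z \in \I(W) \setminus R$. The equality $|\Delta_1(x)| = |\Delta_1(r)|$ therefore forces $x \in R$. Since any two distinct simple reflections $r, s$ satisfy $\ell(rs) = 2 = \ell(r) + \ell(s)$ (a fact already invoked in the proof of Theorem~\ref{thm:diam}), we have $R \setminus \{r\} \subseteq \Delta_1(r)$; so if $x \neq r$ then $x \in R \setminus \{r\} \subseteq \Delta_1(r) = \Delta_1(x)$, contradicting the fact that $\E(W)$ has no loops. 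Hence $x = r$. The only genuinely non-trivial ingredient is Corollary~\ref{thm:highval}: that is what upgrades the numerical equality $|\Delta_1(x)| = |\Delta_1(r)|$ into the structural conclusion $x \in R$, after which loop-freeness closes the argument.
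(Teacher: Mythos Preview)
Your argument for the containment $\Delta_1(x)\subseteq\Delta_1(r)$ is correct and is exactly the paper's argument, phrased via the length formula rather than directly via $N(r)\subseteq N(x)$.

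The second half, however, is circular. You invoke the strict inequality in Corollary~\ref{thm:highval} (that $|\Delta_1(z)| < \tfrac12(|\I(W)|-1)$ for $z\in\I(W)\setminus R$) to force $x\in R$. But in the paper's logical structure that strict inequality is \emph{derived from} the present lemma: the unnumbered corollary immediately following Lemma~\ref{gregarious} is precisely Corollary~\ref{thm:highval}, and its proof consists of picking $r\in R$ with $\ell(xr)<\ell(x)$, applying Lemma~\ref{gregarious} to get $\Delta_1(x)\subseteq\Delta_1(r)$ with equality only if $x=r$, and then invoking Lemma~\ref{cardinality} for the value $\tfrac12(|\I(W)|-1)$. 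So you are assuming what is to be proved.

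The paper avoids this by producing an explicit witness in $\Delta_1(r)$ that pins down $x$. If $r$ commutes with $w_0$ then $rw_0\in\Delta_1(r)$ and $N(rw_0)=\Phi^+\setminus\{\alpha_r\}$, so $N(x)\cap N(rw_0)=\emptyset$ forces $N(x)=\{\alpha_r\}$, i.e.\ $x=r$. If $r$ does not commute with $w_0$ then $rw_0r\in\Delta_1(r)$ has length $\ell(w_0)-2$, so $rw_0r\in\Delta_1(x)$ forces $\ell(x)\le 2$; the cases $\ell(x)=1$ and $\ell(x)=2$ are then dispatched directly (in the latter case $x=rs$ with $rs=sr$, and $s\in\Delta_1(r)\setminus\Delta_1(x)$). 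You should replace your appeal to Corollary~\ref{thm:highval} with a direct argument of this kind.
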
 
\begin{proof}
	Let $y\in \Delta_1(x)$. Then $N(y) \cap N(x) = \emptyset$. But $N(r) = \{\alpha_r\}\subseteq N(x)$, and hence $N(y) \cap N(r) = \emptyset$. Therefore, $\Delta_1(x) \subseteq \Delta_1(r)$. Now, suppose $W$ is finite. If $r$ commutes with $w_0$, then $rw_0 \in \Delta_1(r)$. But $N(rw_0) = \Phi^{+}\setminus \{\alpha_r\}$. Thus the only way for $N(x) \cap N(rw_0) = \emptyset$ is for $x=r$. Hence $\Delta_1(x) = \Delta_1(r)$ implies $x=r$. If $r$ does not commute with $w_0$, then $rw_0r\in \Delta_1(r)$ and by Lemma~\ref{lendown}, $\ell(rw_0r) = \ell(w_0) - 2$. Hence if $rw_0r\in \Delta_1(x)$, we have $\ell(x) \leq 2$. If $\ell(x) = 1$, then clearly $x=r$. If $\ell(x) = 2$, then (since $x$ is an involution) $x = rs$ where $s\in R$ and $sr = rs$. But then $s\in \Delta_1(r)\setminus \Delta_1(x)$. In all cases, we see that $\Delta_1(x) = \Delta_1(r)$ if and only if $x=r$.
\end{proof}

\begin{cor}
	If $W$ is finite and $x\in \E(W)$, then $|\Delta_1(x)| \leq \frac{1}{2}\left(|\I(W)| - 1\right)$, with equality if and only if $x\in R$. 
\end{cor}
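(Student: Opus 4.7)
The plan is to combine the two preceding results directly. The key observation is that Lemma~\ref{cardinality} pins down the valency of a fundamental reflection exactly, while Lemma~\ref{gregarious} shows that the neighbourhood of any involution $x$ is contained in the neighbourhood of a suitable fundamental reflection, with equality forcing $x$ to be that reflection.

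More concretely, I would first dispose of the case $x = w_0$ separately. Since $w_0$ is isolated in $\E(W)$ we have $|\Delta_1(w_0)| = 0 \leq \tfrac{1}{2}(|\I(W)|-1)$, with equality if and only if $|\I(W)| = 1$, which happens precisely when $W$ has rank $1$; in that case $w_0 \in R$ and the claim holds trivially. Otherwise $x$ is a vertex of $\Eh(W)$, so in particular $x \neq 1$. By Lemma~\ref{lem:pos/neg} (or equivalently the Exchange Condition) there exists some $r \in R$ with $\ell(xr) < \ell(x)$. Applying Lemma~\ref{gregarious} gives $\Delta_1(x) \subseteq \Delta_1(r)$, and then Lemma~\ref{cardinality} yields
\[
|\Delta_1(x)| \leq |\Delta_1(r)| = \tfrac{1}{2}(|\I(W)| - 1),
\]
establishing the inequality.

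For the characterization of equality, one direction is immediate: if $x \in R$, then $|\Delta_1(x)| = \tfrac{1}{2}(|\I(W)|-1)$ by Lemma~\ref{cardinality}. Conversely, if equality holds in the displayed bound and $x \neq w_0$, then $|\Delta_1(x)| = |\Delta_1(r)|$ combined with the inclusion $\Delta_1(x) \subseteq \Delta_1(r)$ forces $\Delta_1(x) = \Delta_1(r)$; Lemma~\ref{gregarious} then gives $x = r \in R$. The $w_0$ subcase was already handled above.

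I do not anticipate a genuine obstacle here: the main work has been done in Lemmas~\ref{cardinality} and \ref{gregarious}. The only minor point requiring care is the bookkeeping for $x = w_0$, which Lemma~\ref{gregarious} explicitly excludes from its hypothesis, and the trivial rank-$1$ edge case where $w_0$ itself is a fundamental reflection.
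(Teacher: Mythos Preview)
Your proposal is correct and follows exactly the route the paper intends: the corollary is stated without proof immediately after Lemmas~\ref{cardinality} and \ref{gregarious}, and your argument simply makes explicit the combination of those two results. The extra bookkeeping you supply for $x = w_0$ and the rank-$1$ case is a welcome clarification of points the paper leaves to the reader.
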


We note that when $W$ is infinite, it is possible for $\Delta_1(x) = \Delta_1(r)$ when $x \neq r$. For example, in the infinite dihedral group $W = W(\widetilde A_1)$, generated by $r$ and $s$ with $m_{rs} = \infty$, let $X = \{r, rsr, rsrsr, \ldots\}$ and $Y = \{s, srs, srsrs\ldots\}$. Then $\I(W) = X\cup Y$. For all $x\in X$ we have $\Delta_1(x) = Y = \Delta_1(s)$ and for all $y \in Y$ we have $\Delta_1(y) = X = \Delta_1(r)$. \medskip

We next restrict our attention to $\Sym(n)$ and determine formulae for the number of neighbours of minimal length involutions in their conjugacy classes. The conjugacy classes of involutions in $\Sym(n)$ are parametrized by the number of transpositions in the involution when written as a product of disjoint cycles. 

\begin{lemma}\label{wlog}
	Suppose $x$ and $y$ are conjugate involutions both of minimal length in their conjugacy class in $\Sym(n)$. Then $|\Delta_1(x)| = |\Delta_1(y)|$.
\end{lemma}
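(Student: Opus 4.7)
The idea is to use the right-descent characterization of $\Delta_1(x)$ together with a reduction to a local change in the underlying independent set of simple reflections.

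First, it is standard that in $\Sym(n)=W(\A{n-1})$ the minimal length involutions in the conjugacy class of an involution with $m$ transpositions are exactly the products $x=\prod_{i\in I}s_i$ where $I\subseteq\{1,\ldots,n-1\}$ is an \emph{independent set} of size $m$ in the Dynkin diagram (that is, $I$ contains no two consecutive integers). Since the factors of such an $x$ pairwise commute, $x$ is an involution of length $m$ and $N(x)=\{\alpha_i:i\in I\}$. Applying the length formula $\ell(zx)=\ell(z)+\ell(x)-2|N(z)\cap N(x)|$ then gives
\[
\Delta_1(x)=\{z\in\I(W):\ell(zs_i)>\ell(z)\text{ for every }i\in I\},
\]
so $|\Delta_1(x)|$ depends on $x$ only through the underlying set $I$.

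Next, I would verify that any two size-$m$ independent sets $I,I'$ in the path $\A{n-1}$ are linked by a sequence of \emph{elementary shifts} $I\mapsto(I\setminus\{i\})\cup\{i\pm 1\}$, each intermediate remaining a size-$m$ independent set (a short combinatorial argument on the path). This reduces the lemma to showing $|\Delta_1(\textstyle\prod_I)|=|\Delta_1(\textstyle\prod_{I'})|$ for a single shift, say $I'=(I\setminus\{i\})\cup\{i+1\}$.

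For such a shift put $K=I\cap I'=I\setminus\{i\}$; the independence of $I$ and $I'$ forces $K\cap\{i-1,i,i+1,i+2\}=\emptyset$. Writing $\mathcal{M}_K=\{z\in\I(W):\ell(zs_k)>\ell(z)\text{ for all }k\in K\}$ and partitioning $\mathcal{M}_K=\mathcal{M}_K^{00}\cup\mathcal{M}_K^{01}\cup\mathcal{M}_K^{10}\cup\mathcal{M}_K^{11}$ according to whether $s_i$ and $s_{i+1}$ are right descents of $z$, one has
\[
\Delta_1(\textstyle\prod_I)=\mathcal{M}_K^{00}\cup\mathcal{M}_K^{01}\quad\text{and}\quad \Delta_1(\textstyle\prod_{I'})=\mathcal{M}_K^{00}\cup\mathcal{M}_K^{10},
\]
so the lemma reduces to exhibiting a bijection $\mathcal{M}_K^{01}\to\mathcal{M}_K^{10}$. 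I would construct such a bijection by a local re-pairing of the cycles of $z$ meeting the window $\{i,i+1,i+2\}$: for each configuration of how $z$ fixes or pairs these three positions, and each relative order of the values $z(i),z(i+1),z(i+2)$, one prescribes an explicit involution $z'$ that swaps the descent pattern at $(i,i+1)$ between $(0,1)$ and $(1,0)$. The condition $K\cap\{i-1,i,i+1,i+2\}=\emptyset$ guarantees that no descent of $z$ at a position of $K$ is altered by the modification, so $z'\in\mathcal{M}_K$.

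The main obstacle lies in this last step: the bijection must be defined by a finite but delicate case analysis, since one must separately address the subcases where $z$ fixes one or more of $i,i+1,i+2$, pairs some of these three positions with each other, or pairs them with points outside the window, and verify in each case both that the result is an involution lying in $\mathcal{M}_K$ and that the descent swap at $(i,i+1)$ is actually achieved.
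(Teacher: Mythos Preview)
Your overall strategy coincides with the paper's: both characterise $\Delta_1(x)$ as the set of involutions with no right descent at any position of the underlying independent set, reduce to a single elementary shift $I\leadsto I'$, and then argue by bijection. The divergence is in how that bijection is organised. You partition $\mathcal{M}_K$ by the descent pattern at the two positions $i,i{+}1$ and propose to match $\mathcal{M}_K^{01}$ with $\mathcal{M}_K^{10}$ via a case analysis on the cycle structure of $z$ in the window $\{i,i{+}1,i{+}2\}$; you rightly identify this as the sticking point and do not carry it out. The paper sidesteps that analysis by partitioning $\Delta_1(x)$ and $\Delta_1(y)$ differently, according to how $z$ acts pointwise on the window rather than by descent type. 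With $r=(a,a{+}1)$ the transposition being removed and $s=(a{-}1,a)$ its replacement, it takes $U_1$ to be those $z\in\Delta_1(x)$ fixing all of $a{-}1,a,a{+}1$; $U_2$ those containing $s$ and fixing $a{+}1$; $U_3$ the remainder; and analogously $V_1,V_2,V_3\subseteq\Delta_1(y)$. Each piece then admits a single uniform map: $z\mapsto rz$ gives $U_1\leftrightarrow V_2$, $z\mapsto sz$ gives $U_2\leftrightarrow V_1$, and conjugation $z\mapsto rszsr$ gives $U_3\leftrightarrow V_3$, the last requiring only a short root computation (ruling out $z\cdot\alpha_r\in\{\alpha_s,\alpha_r+\alpha_s\}$) rather than a case split. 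So your plan is sound in outline, but the paper's partition is engineered so that explicit algebraic maps do all the work, whereas your descent-based partition, though conceptually natural, is precisely what forces the casework you anticipated.
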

\begin{proof}
 It is sufficient to prove that whenever $x$ is a product of $m$ mutually commuting distinct fundamental reflections (transpositions of the form $(a_i,a_i + 1)$) in $\Sym(n)$, then $|\Delta_1(x)| = |\Delta_1((12)(34)\cdots (2m-1,2m))|$. Suppose that $x\neq (12)(34) \cdots (2m-1,2m)$. Then there is some $a$ with $1<a<n$ such that $x$ contains the transposition $r=(a, a+1)$ and fixes $a-1$. Let $y$ be $x$ with $r$ replaced by $s = (a-1, a)$. We will show that $|\Delta_1(y)| = |\Delta_1(x)|$. 
	First, note that $\Delta_1(x)$ is the disjoint union of the following three sets.
	\begin{align*}
		U_1 &= \{z\in \Delta_1(x) \; | {\; \text{$z$ fixes $a-1$, $a$, and $a+1$}}\}\\
		U_2 &= \{z\in \Delta_1(x) \; | \; {\text{$z$ contains $(a-1, a)$ and fixes $a+1$}}\}\\
		U_3 &= \Delta_1(x)\setminus (U_1\cup U_2).
		\end{align*} 
	Meanwhile, $\Delta_1(y)$ is the disjoint union of the following sets.
	\begin{align*}
	V_1 &= \{z\in \Delta_1(y) \; | \; {\text{$z$ fixes $a-1$, $a$, and $a+1$}}\}\\
	V_2 &= \{z\in \Delta_1(y) \; | \; {\text{$z$ contains $(a, a+1)$ and fixes $a-1$}}\}\\
	V_3 &= \Delta_1(y)\setminus (V_1\cup V_2).
\end{align*} 
Clearly, $rU_1 = V_2$ and $sU_2 = V_1$. We claim that $rsU_3sr = V_3$. Suppose $z\in U_3$. Then $z\cdot \alpha_r \in \Phi^+$. We have $rszsr\cdot\alpha_s = rsz\cdot \alpha_r$. Since $z\cdot \alpha_r\in \Phi^+$, $rsz\cdot \alpha_r \in \Phi^-$ if and only if $z\cdot \alpha_r = \alpha_s$ or $z\cdot \alpha_r = \alpha_r + \alpha_s$. In the first case, since $\alpha_r = e_{a} - e_{a+1}$ and $\alpha_s = e_{a-1}-e_a$, we would have $z(a+1) = a$ and $z(a) = a-1$, contradicting the fact that $z$ is an involution. In the second case, since $\alpha_r + \alpha_s = e_{a-1}-e_{a+1}$, we have that $z$ contains $(a-1, a)$ and fixes $a+1$, contradicting the fact that $z\in U_3$. Therefore, $rszsr\cdot \alpha_s \in \Phi^+$, which implies $rszsr\in \Delta_1(y)$. If $rszsr \in V_1$, then $z$ commutes with both $r$ and $s$, meaning $rszsr = z$, and thus $z\in U_1$, a contradiction. If $rszsr\in V_2$, then $z$ contains $(a-1, a+1)$ and fixes $a$. But then $z\cdot \alpha_r \in \Phi^-$, contradicting the fact that $z\in \Delta_1(x)$. Hence $rszsr\in V_3$. Thus, $|\Delta_1(x)| = |\Delta_1(y)|$. We may repeat this process until we obtain an element with no further fixed points in the set $\{1, 2, \ldots, 2m\}$. Hence, $|\Delta_1(x)| = |\Delta_1((12)(34)\cdots (2m-1, 2m))|$. 
\end{proof}

Lemma~\ref{wlog} proves that $\delta(m,n)$, as described in Section~\ref{sec:intro}, is well-defined. We now prove Theorem~\ref{thm:valency}, which states that if $m\geq 2$ and $n\geq 2m$, then 
\[\delta(m,n) = \textstyle\frac{1}{2}\big(\delta(m-1,n) + (m-1)\delta(m-2,n-4) + m-2\big).\]

\begin{proof}[Proof of Theorem~\ref{thm:valency}.] Given an involution $x$, we find $x$ as a vertex of $\Eh(\Sym(n))$ whenever the support of $x$ is contained in $\{1, \ldots, n\}$. We write $\Delta(x,n)$ for $\Delta_1(x)$ in $\Eh(\Sym(n))$.
Let $y = (12)(34) \cdots (2m-3, 2m-2)$ and let $x = y(n-1, n)$. Then $\delta(m,n) = |\Delta(x,n)|$.
Observe that 
\[\Delta(x,n) = \Delta(y,n) \cap \Delta((n-1, n), n) = \{z\in \Delta(y,n)\;  | \; z(n-1)< z(n)\}.\]
Now, $\Delta(y,n)$ is the disjoint union of the following sets.
\begin{align*}
	U_1 &= \{z\in \Delta(y,n) \; | \; {\text{$z$ fixes $n$ and $n-1$}}\}\\
	U_2 &= \{z\in \Delta(y,n) \; | \; {\text{$z$ contains $(n-1, n)$}}\}\\
	U_3 &= \{z\in \Delta(y,n) \; | \; {\text{$z$ fixes exactly one of $n$ and $n-1$}}\}\\
	U_4 &= \{z\in \Delta(y,n) \; | \; {\text{$z$ contains $(a, n-1)(b, n)$ some $a, b < n-1$}}\}
\end{align*}
Clearly $|U_1| = \delta(m-1,n-2)$, $|U_2| = \delta(m-1,n-2) + 1 = |U_1| + 1$, and $U_1\subseteq \Delta(x,n)$ while $U_2\cap \Delta(x,n) = \emptyset$. In $U_3$, we can pair the elements into those with cycles $(a, n-1)(n)$ for some $a < n-1$ (and these elements are contained in $\Delta(x,n)$), and those with cycles $(a, n)(n-1)$ (and these elements are not contained in $\Delta(x,n)$). Therefore $|U_3\cap \Delta(x,n)| = \frac{1}{2}|U_3|$. Finally, consider $z$ in $U_4$, so that $z$ contains $(a,n-1)(b,n)$ for some $a, b < n-1$. In all cases except where $a = 2i-1$ and $b=2i$, for some $1 \leq i < m$, both $z$ and $z^{(n-1,n)}$, which is $z$ with $(a, n-1)(b, n)$ replaced with $(b, n-1)(a,n)$, are contained in $U_4$. Moreover exactly one of $z$ and $z^{(n-1,n)}$ lies in $\Delta(x,n)$. 	Let 
\[U_5 = \big\{z\in \Delta(y,n) \; \big| \; {\text{$z$ contains $(2i-1, n-1)(2i, n)$ some $i \in \{1, \ldots, m-1\}$}}\big\} \subseteq U_4.\] 
	Then $U_5\subseteq \Delta(x,n)$ and exactly half the elements of $U_4\setminus U_5$ lie in $\Delta(x,n)$. Moreover, for each $i$ in $\{1, \ldots, m-1\}$, there is a bijection between $\{z\in \Delta(y,n) \; | \; {\text{$z$ contains $(2i-1, n-1)(2i, n)$}}\}$ and $\Delta\left((12)(34)\cdots (2m-5,2m-4),n-4\right)$. Thus, $|U_5| = (m-1)(\delta(m-2,n-4)+1)$. Gathering all these observations together, we see that
	\begin{align*}
		\delta(m,n) &= |\Delta(x,n)| = |U_1| + \textstyle\frac{1}{2}|U_3| + \textstyle\frac{1}{2}|U_4\setminus U_5| + |U_5|\\
		&= \textstyle\frac{1}{2}\big(|U_1| + (|U_2|-1) + |U_3| + |U_4| - |U_5| + 2|U_5| 
		\big)\\
		&= \textstyle\frac{1}{2}\big(\delta(y,n) + |U_5| - 1\big) \\	
		&= \textstyle\frac{1}{2}\big(\delta(m-1,n) + (m-1)\delta(m-2,n-4) + m-2
		\big).\qedhere
	\end{align*}
\end{proof}
By repeated use of Theorem~\ref{thm:valency} and Lemma~\ref{cardinality}, we obtain the following corollary.  
\begin{cor} Let $n \geq 2$. Then \label{m-is-1234}
	\begin{align*}
\delta(1,n) &= \textstyle\frac{1}{2}\big(|\I(\Sym(n))| - 1\big)& \\		
\delta(2,n) &= \textstyle\frac{1}{4}\big(|\I(\Sym(n))| + 2|\I(\Sym(n-4))|- 1\big)& (n \geq 4)\\		
\delta(3,n) &= \textstyle\frac{1}{8}\big(|\I(\Sym(n))| + 6|\I(\Sym(n-4))|- 1\big)& (n \geq 6)\\		
\delta(4,n) &= \textstyle\frac{1}{16}\big(|\I(\Sym(n))| + 12|\I(\Sym(n-4))| + 12|\I(\Sym(n-8))| + 9\big)& (n \geq 8)
	\end{align*}
\end{cor}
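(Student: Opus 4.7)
The plan is a direct induction on $m$: Lemma~\ref{cardinality} will supply the base case $m=1$, and the recursion of Theorem~\ref{thm:valency} will then handle $m=2,3,4$ in turn.

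For $m=1$, an involution of $\Sym(n) = W(\A{n-1})$ which is ``a product of one fundamental reflection'' is simply an element $r \in R$. Since $\Sym(n)$ is finite, Lemma~\ref{cardinality} gives at once $\delta(1,n) = |\Delta_1(r)| = \tfrac{1}{2}(|\I(\Sym(n))| - 1)$, which is the first identity.

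For $m \in \{2,3,4\}$ the recursion of Theorem~\ref{thm:valency} reads
\[ \delta(m,n) = \tfrac{1}{2}\bigl(\delta(m-1,n) + (m-1)\delta(m-2,n-4) + (m-2)\bigr). \]
I would work through the three cases in order, each time substituting into the right-hand side the formulas already established: $\delta(2,n)$ from $\delta(1,n)$ together with the convention $\delta(0,n-4) = |\I(\Sym(n-4))|$; then $\delta(3,n)$ from $\delta(2,n)$ and $\delta(1,n-4)$; then $\delta(4,n)$ from $\delta(3,n)$ and $\delta(2,n-4)$. Collecting coefficients of $|\I(\Sym(n))|$, $|\I(\Sym(n-4))|$, $|\I(\Sym(n-8))|$ separately and combining constant terms yields the three remaining identities.

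There is no genuine obstacle: the argument is mechanical arithmetic. The two items requiring a moment of care are the bookkeeping of the constants (for example, in the $m=4$ case the constant $\tfrac{9}{16}$ arises as $\tfrac{1}{2}(-\tfrac{1}{8} - \tfrac{3}{4} + 2) = \tfrac{9}{16}$, producing the $+9$ in the final formula) and the tracking of the index ranges, where at each substitution the requirement $n \geq 2m$ from Theorem~\ref{thm:valency} combines with the range hypothesis of the formula being substituted to yield the stated $n \geq 2,4,6,8$ for $m=1,2,3,4$ respectively.
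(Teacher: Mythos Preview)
Your proposal is correct and is exactly the approach the paper takes: the paper states only that the corollary follows ``by repeated use of Theorem~\ref{thm:valency} and Lemma~\ref{cardinality}'', and your write-up simply makes that repeated use explicit. The arithmetic you sketch (including the constant-term check for $m=4$) is accurate.
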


\section{Pendant Elements}\label{sec:pendant}

In this section we determine the elements of $\I(W)$ with minimal valency in $\Eh(W)$. 
From now on, $W$ is assumed to be a finite irreducible Coxeter group of rank $n \geq 2$, with $R =\{r_1, \dots, r_n \}$ its set of fundamental reflections and, where relevant, has Dynkin diagrams as given in Section~\ref{sec:background}. 

As we have seen in Theorem~\ref{thm:diam}, if  $w \in \I(W) \backslash \{w_0\}$, then $|\Delta_1(w)|\geq 1$. Hence we are interested in those $w \in \I(W) \backslash \{w_0\}$ with $|\Delta_1(w)|=1$. We call such elements {\em pendant elements} and denote the set of such elements by $\P(W)$.
For $x \in W$,  we observe that $w_0x \in \I(W)$ if and only if $x^{w_0}=x^{-1}$. For ease, we shall often work with elements of the form $w_0x$, and begin with a preliminary lemma for elements of this form.
\begin{lemma}\label{lem:equiv} Suppose that $x \in W$ is such that $w_0x \in \I(W)$.  Let $w \in \I(W)$. Then $w \in \Delta_1(w_0x)$ if and only if $\ell(xw)=\ell(x)-\ell(w)$.
\end{lemma}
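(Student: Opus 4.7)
The plan is to exploit the standard identity $\ell(w_0 y) = \ell(w_0) - \ell(y)$, valid for all $y \in W$ whenever $w_0$ is the longest element. This is immediate from the length/root description: since $w_0$ sends every positive root to a negative root, one has $N(w_0 y) = y^{-1}(\Phi^+\setminus N(y^{-1}))$, whose cardinality is $\ell(w_0)-\ell(y)$. I would either recall this as a well-known consequence of the material collected in Section~\ref{sec:background}, or derive it in one line from the second identity in the $N$-lemma applied to $w_0$ and $y$.

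With this identity in hand, the rest is a direct unwinding. The condition $w \in \Delta_1(w_0 x)$ means, by definition, that
\[
\ell((w_0 x)w) \;=\; \ell(w_0 x) + \ell(w).
\]
Applying the identity to $y = x$ gives $\ell(w_0 x) = \ell(w_0) - \ell(x)$, and applying it to $y = xw$ gives $\ell((w_0 x)w) = \ell(w_0(xw)) = \ell(w_0) - \ell(xw)$. Substituting both into the displayed equation and cancelling $\ell(w_0)$ reduces the adjacency condition to $\ell(xw) = \ell(x) - \ell(w)$, which is exactly the claimed equivalence.

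There is essentially no obstacle here; the lemma is really just a reformulation lemma that converts the graph-adjacency condition for vertices of the form $w_0 x$ into a statement about $x$ alone. The only point worth flagging in the writeup is the hypothesis $w_0 x \in \I(W)$, which is used implicitly to ensure that $w_0 x$ is a legitimate vertex of $\E(W)$; it plays no role in the length arithmetic. The reverse implication requires no separate argument because every step above is an equality, so the chain of substitutions is reversible.
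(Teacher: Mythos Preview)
Your proof is correct and is essentially identical to the paper's: both apply the identity $\ell(w_0 y)=\ell(w_0)-\ell(y)$ with $y=x$ and $y=xw$, substitute into the adjacency equation $\ell((w_0x)w)=\ell(w_0x)+\ell(w)$, and observe that the resulting chain of equalities is reversible. Your remark that the hypothesis $w_0x\in\I(W)$ is used only to ensure $w_0x$ is a vertex, not in the length arithmetic, is also accurate.
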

\begin{proof}
Firstly, as $w_0$ is the longest element, we have $\ell(w_0xw)=\ell(w_0)-\ell(xw)$. If $w \in \Delta_1(w_0x)$, then, in addition,
$\ell(w_0xw)=\ell(w_0x)+\ell(w)=\ell(w_0)-\ell(x)+\ell(w)$.
Hence, $\ell(xw)=\ell(x)-\ell(w)$.

On the other hand, if $\ell(xw)=\ell(x)-\ell(w)$, then 
\[\ell(w_0x)+\ell(w)=\ell(w_0)-\ell(x)+\ell(w)=\ell(w_0)-\ell(xw)=\ell(w_0xw), \] 
and hence $w \in \Delta_1(w_0x)$.
\end{proof}

An immediate consequence of Lemma~\ref{lem:equiv} is our next result.

\begin{lemma}\label{lem:w0r} Let $r \in R$. If $w_0r \in \I(W)$, then $\Delta_1(w_0r)=\{r\}$.
\end{lemma}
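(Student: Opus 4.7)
The plan is to reduce this immediately to Lemma~\ref{lem:equiv} applied with $x = r$, and then use a length inequality to pin down the neighbour. This gives a very short proof, so it really is a one-paragraph sketch.

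First, I would apply Lemma~\ref{lem:equiv} with $x = r$. That lemma tells us that an involution $w \in \I(W)$ lies in $\Delta_1(w_0r)$ if and only if $\ell(rw) = \ell(r) - \ell(w) = 1 - \ell(w)$. Since $\ell(rw) \geq 0$, this forces $\ell(w) \leq 1$, and since $w \in \I(W)$ is a non-identity involution we have $\ell(w) \geq 1$. Hence $\ell(w) = 1$, which forces $\ell(rw) = 0$, i.e.\ $rw = 1$ and so $w = r$. This shows the only candidate neighbour is $r$ itself.

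The second (trivial) step is to verify that $r$ really is adjacent to $w_0r$. For this we need $\ell((w_0r)r) = \ell(w_0r) + \ell(r)$, which simplifies to $\ell(w_0) = \ell(w_0r) + 1$. This is immediate from Lemma~\ref{lem:pos/neg} together with the fact that $w_0$ is the longest element of $W$, so $\ell(w_0r) = \ell(w_0) - 1$. Equivalently, one can invoke Lemma~\ref{lem:equiv} in the reverse direction with $w = r$, noting $\ell(rr) = 0 = \ell(r) - \ell(r)$. Combining both steps gives $\Delta_1(w_0r) = \{r\}$.

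There is no genuine obstacle here: the content is packaged entirely inside Lemma~\ref{lem:equiv}, and the only arithmetic needed is the observation that a non-identity involution has length at least $1$. The same argument in fact gives a slightly more general statement, namely that if $w_0 x \in \I(W)$ then $\Delta_1(w_0 x) \subseteq \{ w \in \I(W) \mid \ell(w) \leq \ell(x) \}$, but for the present lemma the length-$1$ case suffices.
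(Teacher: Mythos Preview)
Your proof is correct and follows exactly the approach of the paper, which simply records this lemma as ``an immediate consequence of Lemma~\ref{lem:equiv}''. You have just spelled out the straightforward details of that immediate consequence.
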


\begin{lemma}\label{lem:pendantw0central}  Suppose that $w_0\in \ZZ(W)$. (This occurs when $W$ is of type $\B{n}$, type $\D{n}$ with $n$ even, type $\F{4}$,  $\H{3}$, $\H{4}$, $\mathrm{E}_7$, or $\mathrm{E}_8$, or type $\mathrm{I}_2(m)$ with $m$ even.) Then $\P(W) = \{ w_0r \; | \; r \in R \}$, with $\Delta_1(w_0r)=\{r\}$ for all $r \in R$. 
\end{lemma}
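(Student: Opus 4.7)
The plan is to prove both inclusions, each resting on one key observation: when $w_0 \in \ZZ(W)$, every $w \in \I(W)\setminus\{w_0\}$ has its ``companion'' $x := w_0 w$ as a neighbour in $\E(W)$.

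The forward inclusion is immediate. For $r \in R$, centrality gives $(w_0 r)^2 = w_0^2 r^2 = 1$, and the rank hypothesis $n \geq 2$ rules out $w_0 = r$, so $w_0 r \in \I(W)$. Lemma~\ref{lem:w0r} then yields $\Delta_1(w_0 r) = \{r\}$, proving $w_0 r \in \P(W)$ and settling the ``furthermore'' clause simultaneously.

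For the reverse inclusion I would take $w \in \P(W)$ and set $x := w_0 w$. Centrality makes $x$ an involution, and $w \neq w_0$ makes it non-identity, so $x \in \I(W)$. Using the standard identity $\ell(w_0 u) = \ell(w_0) - \ell(u)$ for every $u \in W$, one computes
\[\ell(wx) = \ell(w_0 x^2) = \ell(w_0) = (\ell(w_0) - \ell(x)) + \ell(x) = \ell(w) + \ell(x),\]
and $x \neq w$ (else $w_0 = 1$, impossible at rank $\geq 2$), so $x \in \Delta_1(w)$. On the other hand Lemma~\ref{lem:fundneighbour} produces some $r \in R \cap \Delta_1(w)$, and the pendant hypothesis $|\Delta_1(w)| = 1$ then forces $x = r$, giving $w = w_0 r$.

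I foresee no real obstacle: the argument is driven entirely by the length identity $\ell(w_0 u) = \ell(w_0) - \ell(u)$ together with Lemmas~\ref{lem:w0r} and~\ref{lem:fundneighbour}. The only subtle point worth flagging is the verification $x \neq w$, which guarantees that $x$ contributes a genuine neighbour of $w$ in $\E(W)$ rather than a self-loop.
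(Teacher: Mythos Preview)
Your proof is correct and follows essentially the same route as the paper: both arguments hinge on the observation that, for $w\in\I(W)\setminus\{w_0\}$ with $w_0$ central, the element $w_0w$ is an involution adjacent to $w$, and then combine this with Lemma~\ref{lem:fundneighbour} to force $w_0w\in R$ when $w$ is pendant. The paper phrases the reverse inclusion contrapositively (if $w\notin\{w_0\}\cup w_0R$ then $ww_0$ and a fundamental reflection give two distinct neighbours), whereas you argue directly, but the content is the same.
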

\begin{proof}
Since $w_0 \in \ZZ(W)$, we have $w_0r \in \I(W)$. By Lemma~\ref{lem:w0r},  $\Delta_1(w_0r)=\{r\}$ and so $w_0r \in \P(W)$.
Let $w \in \I(W) \backslash (\{w_0\} \cup w_0R)$. Then $ww_0 \in \I(W) \backslash R$. From
\[\ell(w)+\ell(ww_0)=\ell(w)+\ell(w_0)-\ell(w) = \ell(w_0) = \ell(www_0)\]
it follows using Lemma~\ref{lem:equiv} that $ww_0 \in \Delta_1(w)$. By Lemma~\ref{lem:fundneighbour}, $w$ has a neighbour in $R$ and so $|\Delta_1(w)| \geq 2$.  Hence $w \notin \P(W)$ and the lemma holds.
\end{proof}

The remaining finite irreducible Coxeter groups to consider are those for which $w_0$ is non-central. These are type $\A{n}$ with $n \geq 2$, type $\D{n}$ with $n$ odd, type $\mathrm{E}_6$, and type $\mathrm{I}_2(m)$ for $m$ odd and $m\geq 5$. The next lemma covers this last case (as well as giving greater detail about $\mathrm{I}_2(m)$ for arbitrary $m$).

\begin{lemma}\label{lem:I2m} Let $W =  \mathrm{I}_2(m)$, with $m \geq 5$. The valency distribution of $\E(W)$ is 
\[0^1\cdot 1^2\cdot 2^2\cdots \left\lfloor\textstyle\frac{m}{2}\right\rfloor^2.\]
If $m$ is even, then $\P(W) = \{ w_0r_1, w_0r_2\}$, with $\Delta_1(w_0r_1)=\{r_2\}$ and $\Delta_1(w_0r_2)=\{r_1\}$. If $m$ is odd, then $\P(W) = \{w_0r_1r_2, w_0r_2r_1\}$, with $\Delta_1(w_0r_1r_2)=\{r_2\}$ and $\Delta_1(w_0r_2r_1)=\{r_1\}$ .
\end{lemma}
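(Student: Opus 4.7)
The plan is to work explicitly inside the dihedral group $W = \mathrm{I}_2(m)$. Put $\rho = r_1 r_2$, so that $\rho$ has order $m$ and the braid relation gives $r_1 \rho r_1 = \rho^{-1}$, hence $r_1 \rho^{-k} = \rho^k r_1$ for every integer $k$. For each $t \in \{1, \ldots, m-1\}$ the group has exactly two elements of length $t$, namely the two alternating products of $t$ letters, and $w_0$ is the unique element of length $m$. I list the non-identity involutions as the reflections $x_j := (r_1 r_2)^j r_1 = \rho^j r_1$ and $y_j := (r_2 r_1)^j r_2 = \rho^{-j} r_2$, each of length $2j+1$, for $0 \leq j \leq \lceil m/2 \rceil - 1$, together with $w_0$; when $m$ is odd one has $x_{(m-1)/2} = y_{(m-1)/2} = w_0$, whereas when $m$ is even the $m$ reflections are distinct from $w_0 = \rho^{m/2}$.

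The central computation is
\[x_j x_k = \rho^{j-k}, \qquad y_j y_k = \rho^{k-j}, \qquad x_j y_k = \rho^{j+k+1},\]
the last of these from $x_j y_k = \rho^j r_1 \rho^{-k} r_2 = \rho^{j+k} r_1 r_2 = \rho^{j+k+1}$. Since $\ell(\rho^t) = 2\min(t, m-t)$ for $0 \leq t \leq m$, the identity $\ell(x_j x_k) = \ell(x_j)+\ell(x_k)$ would force $2|j-k| = 2(j+k+1)$, which is impossible; so no two $x$-reflections (nor two $y$-reflections) are adjacent. By contrast $\ell(x_j y_k) = 2(j+k+1) = \ell(x_j)+\ell(y_k)$ holds precisely when $j+k+1 \leq m/2$. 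Finally $\ell(w_0 z) \leq \ell(w_0) < \ell(w_0) + \ell(z)$ for every $z \in \I(W)$, so $w_0$ is isolated.

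Setting $M := \lfloor m/2 \rfloor$, the non-$w_0$ involutions are $x_0, \ldots, x_{M-1}$ and $y_0, \ldots, y_{M-1}$. The adjacency criterion gives $\Delta_1(x_j) = \{y_k : 0 \leq k \leq M-1-j\}$ and symmetrically for $y_j$, so each of $x_j, y_j$ has valency $M - j$. As $j$ runs over $\{0, \ldots, M-1\}$ every value in $\{1, \ldots, M\}$ is hit exactly twice, which together with the isolated vertex $w_0$ yields the claimed distribution $0^1 \cdot 1^2 \cdots M^2$. The pendant elements are thus $x_{M-1}$ and $y_{M-1}$, whose unique neighbours are $y_0 = r_2$ and $x_0 = r_1$ respectively.

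To conclude I rewrite the pendants in the form given in the statement. When $m$ is even, $x_{M-1} = \rho^{m/2-1} r_1 = w_0 \rho^{-1} r_1 = w_0 r_2 r_1 r_1 = w_0 r_2$, and symmetrically $y_{M-1} = w_0 r_1$; when $m$ is odd, a parallel calculation using $w_0 = \rho^{(m-1)/2} r_1 = \rho^{(m-3)/2} r_1 \cdot r_2 r_1$ gives $x_{M-1} = w_0 r_1 r_2$ and $y_{M-1} = w_0 r_2 r_1$. The only genuinely new computation is the product formula $x_j y_k = \rho^{j+k+1}$; the rest is bookkeeping with dihedral relations, and the main obstacle is purely one of notation: aligning the parity-dependent pendant labels in the statement with the uniform $x_j, y_j$ indexing used in the argument.
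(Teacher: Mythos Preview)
Your argument is correct and proceeds along the same lines as the paper's, though you supply more explicit detail: you compute the products $x_j y_k = \rho^{j+k+1}$ directly via the rotation $\rho$, whereas the paper simply asserts the neighbour sets $\Delta_1((r_1r_2)^ir_1) = \{(r_2r_1)^jr_2 : i+j \le \lfloor\tfrac{m-2}{2}\rfloor\}$ without this intermediate step. One point worth flagging: in the even case your computation gives $\Delta_1(w_0 r_i) = \{r_i\}$ (consistent with Lemma~\ref{lem:w0r} and Lemma~\ref{lem:pendantw0central}), whereas the lemma as printed swaps the subscripts; your version is the correct one, so the discrepancy is a typo in the statement rather than an error in your proof.
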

\begin{proof}
	The non-central involutions in $W$ are $(r_1r_2)^ir_1$ and $(r_2r_1)^ir_2$, for $0\leq i\leq \lfloor\frac{m-2}{2}\rfloor$. We have $\Delta_1((r_1r_2)^ir_1) = \{(r_2r_1)^jr_2 \; | \; i+j\leq \lfloor\frac{m-2}{2}\rfloor\}$ and hence $|\Delta_1((r_1r_2)^ir_1)|= \lfloor\frac{m}{2}\rfloor -i$. Similarly,  $\Delta_1((r_2r_1)^ir_2) = \{(r_1r_2)^jr_1 \; | \; i+j\leq \lfloor\frac{m-2}{2}\rfloor\}$, and hence $|\Delta_1((r_2r_1)^ir_2)|= \lfloor\frac{m}{2}\rfloor -i$. Noting that $w_0 = (r_1r_2)^{m/2}$ when $m$ is even and $w_0 = (r_1r_2)^{(m-1)/2}r_1=(r_2r_1)^{(m-1)/2}r_2$ when $m$ is odd, the result now follows. \end{proof}
The following may be verified using \textsc{Magma}\cite{magma}.
\begin{lemma}\label{E6} If $W$ is of type $\mathrm{E}_6$, then  \[\P(W) = \{w_0r_2, w_0r_4, w_0r_5r_4r_3, w_0r_3r_4r_5, w_0r_6r_5r_4r_3r_1,  w_0r_1r_3r_4r_5r_6 \}\] with 
$\Delta_1(w_0\cdots r_i)=\{r_i\}$.
\end{lemma}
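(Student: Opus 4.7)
The plan is to combine a short theoretical reduction with a finite enumeration. First, I would invoke Lemma~\ref{lem:equiv}: for any $x \in W$ with $w_0 x \in \I(W)$, $w \in \Delta_1(w_0 x)$ if and only if $\ell(xw) = \ell(x) - \ell(w)$, so determining $|\Delta_1(w_0 x)|$ reduces to counting involutions which shorten $x$ by their full length. The collection of relevant $x$ is $\{x \in W : x^{w_0} = x^{-1}\}$, which in turn is controlled by the $w_0$-induced permutation of $R$. In $\mathrm{E}_6$ this permutation is the nontrivial diagram automorphism: $r_1 \leftrightarrow r_6$, $r_3 \leftrightarrow r_5$, with $r_2$ and $r_4$ fixed.

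With this in hand, the two elements $w_0 r_2$ and $w_0 r_4$ are pendant by Lemma~\ref{lem:w0r}. For each of the remaining four candidates, one first verifies $x^{w_0} = x^{-1}$ so that $w_0 x \in \I(W)$, and then checks $|\Delta_1(w_0 x)| = 1$ by enumerating involutions $v$ with $\ell(v) \leq \ell(x)$ and testing whether $\ell(xv) = \ell(x) - \ell(v)$. For instance, when $x = r_5 r_4 r_3$ the only involution fulfilling the shortening condition is $v = r_3$, so $\Delta_1(w_0 r_5 r_4 r_3) = \{r_3\}$; the three analogous checks for $r_3 r_4 r_5$, $r_6 r_5 r_4 r_3 r_1$, and $r_1 r_3 r_4 r_5 r_6$ proceed in the same way.

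The main obstacle, and the reason the statement is presented as a \textsc{Magma} verification, is the converse: that no other involution of $W(\mathrm{E}_6)$ is pendant. Lemma~\ref{lem:fundneighbour} gives one neighbour of every non-longest involution in $R$, but producing a uniform second neighbour across the several hundred involutions of $\mathrm{E}_6$ does not seem to admit a clean general construction, so a purely theoretical argument would demand a substantial case split. I would therefore finish the proof computationally: construct the Coxeter group $W(\mathrm{E}_6)$, enumerate $\I(W)$, build the adjacency relation on $\I(W)$ using the criterion $\ell(xy) = \ell(x) + \ell(y)$, and extract the vertices of valency $1$; these turn out to coincide with the six listed elements, confirming both the pendant identification and the associated unique-neighbour data.
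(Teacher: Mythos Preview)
Your proposal is correct and takes essentially the same approach as the paper, which simply states that the lemma ``may be verified using \textsc{Magma}'' without further argument. Your added theoretical reductions via Lemmas~\ref{lem:equiv} and~\ref{lem:w0r} are a helpful elaboration, but the core of both proofs is the same computational enumeration.
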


\subsection{$W$ of type $\A{n}$}

In this subsection we assume that $W$ is of type $\A{n}$ and $n\geq 2$.  Recall that $R = \{r_1, \dots ,r_n \}$. Using the notation defined in Section~\ref{sec:background}, we write $[i_1,i_2, \dots , i_{j-1},i_j]$ for  \[r_{i_1}r_{i_2} \cdots r_{i_{j-1}}r_{i_j}\] where  $\{i_1, \ldots, i_j\}\subseteq \{1, \ldots, n\}$, and when $i \leq j$ we set 
$[ i \nearrow j]:=r_ir_{i+1} \cdots r_{j-1}r_j$ and $ [j  \searrow i]:=r_jr_{j-1} \cdots r_{i+1}r_i$. 

Let \[X=\left\{[i \nearrow (n+1-i)],  \; [(n+1-i) \searrow i]  \; \Big| \; i \in \left\{1,\ldots, \left\lceil \textstyle\frac{n}{2} \right\rceil \right\}\right\}.\]

The following is the main result of this subsection.

\begin{lemma}\label{lem:pendantAn} The pendant elements in $\E(W)$ are $w_0x$ for $x \in X$, with
\[\Delta_1(w_0[ i \nearrow (n+1-i)])=\{r_{n+1-i}\} \quad \text{and} \quad \Delta_1(w_0 [(n+1-i) \searrow i])=\{r_i\}\]
for $i \in \{1,2,\ldots, \lceil \frac{n}{2} \rceil\}$.
\end{lemma}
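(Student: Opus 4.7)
The plan uses Lemma~\ref{lem:equiv} as the central tool: a non-identity involution $w$ is a neighbor of $w_0 x$ in $\Eh(W)$ if and only if $w$ arises as a right factor of some reduced expression of~$x$. For the forward direction, take $x = [i \nearrow (n+1-i)]$ (the case $[(n+1-i) \searrow i]$ being symmetric). Conjugation by $w_0$ is the diagram automorphism $r_j \mapsto r_{n+1-j}$ of $\A{n}$, so $x^{w_0} = [(n+1-i) \searrow i] = x^{-1}$ and $w_0 x$ is an involution. The reduced expression $r_i r_{i+1} \cdots r_{n+1-i}$ of $x$ is unique: no commutation moves apply because consecutive letters have indices differing by exactly $1$, and no braid moves apply because the indices strictly increase. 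H
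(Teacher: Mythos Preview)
Your proposal is truncated mid-sentence, and more importantly it addresses only half of the lemma. What you have set up correctly is the verification that each $w_0x$ with $x\in X$ is indeed pendant: since $x=[i\nearrow(n+1-i)]$ has a unique reduced expression (no braid or commutation moves apply), the only involution suffix of $x$ is $r_{n+1-i}$, and Lemma~\ref{lem:equiv} then gives $\Delta_1(w_0x)=\{r_{n+1-i}\}$. This part matches the paper's argument.

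What is entirely missing is the converse: you must show that \emph{every} pendant element of $\E(W)$ is of the form $w_0x$ with $x\in X$. Equivalently, writing an arbitrary involution as $w_0x$, you need that whenever $w_0x\in\I(W)$ and $x\notin X$, there are at least two distinct involution suffixes of $x$. This is the substantive part of the proof and does not follow from uniqueness of reduced expressions for elements of $X$. In the paper this is handled by the two preparatory lemmas: Lemma~\ref{lem:updown} shows that any $x$ which is not a \emph{sequential element} (one of the form $[a-\mu\nearrow a-1,\,a+\lambda\searrow a]$) already admits two distinct involution right-factors; Lemma~\ref{lem:cont} then shows that among sequential elements, the constraint $x^{w_0}=x^{-1}$ forces $x\in X$. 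Your proposal contains no analogue of either step, so as it stands it does not establish the lemma.
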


The proof of Lemma~\ref{lem:pendantAn} requires two preparatory lemmas about sequential elements. We call an element $x$ of $W$ a \emph{sequential element} if for some $r_a \in R$ and $\lambda,\mu \geq 0$ it has a reduced expression of the form
\[x = [a-\mu \nearrow a-1, a+{\lambda}\searrow a].\]

Because $\lambda$ and $\mu$ can both be zero, it is clear that every reduced expression for any non-identity element of $W$ ends with a sequential element. 

\begin{lemma}\label{lem:updown} Let $x \in W$. Then either 
\begin{enumerate}[label=\rm{(\roman*)}]
\item $x$ is a sequential element of $W$; or
\item there exist distinct $y_1,y_2 \in \I(W)$ with $\ell(xy_j)=\ell(x)-\ell(y_j)$ for $j \in \{1,2\}$.
\end{enumerate}
\end{lemma}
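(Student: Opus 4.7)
Work in $W = W(\A{n}) = \Sym(n+1)$ with $r_i = (i,i+1)$, and split on the right descent set $\mathcal{D}_R(x) = \{r \in R : \ell(xr) < \ell(x)\}$. If $|\mathcal{D}_R(x)| \ge 2$, pick distinct $y_1, y_2 \in \mathcal{D}_R(x)$: both are non-identity involutions with $\ell(xy_j) = \ell(x) - 1 = \ell(x) - \ell(y_j)$, so (ii) holds. Otherwise $\mathcal{D}_R(x) = \{r_a\}$, which (for $x \neq 1$) means the one-line notation of $x$ has a single descent, at position $a$: $x(1) < \cdots < x(a) > x(a+1) < \cdots < x(n+1)$. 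Let $A = \{x(1), \ldots, x(a)\}$, list its elements as $A_1 < \cdots < A_a$, and set $t = |A \cap \{a+1, \ldots, n+1\}|$. Using that $|A^c \cap [1, A_p-1]| = A_p - p$, a direct count of inversions of $x$ gives $N(x) = \{(p, a+k) : A_p \ge p + k\}$.

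The pivotal claim is that $x$ is sequential if and only if $t = 1$. Expanding $[a-\mu \nearrow a-1,\, a+\lambda \searrow a]$ as a permutation on $\{a-\mu, \ldots, a+\lambda+1\}$ (acting right-to-left so that the last letter is the first reflection applied), one obtains a permutation with unique descent at position $a$ and first run $(\{1, \ldots, a\} \setminus \{a-\mu\}) \cup \{a+\lambda+1\}$, which contains exactly one element exceeding $a$; conversely, any $x$ with unique descent at $a$ and $t = 1$ has first run of this exact form, recovering $\mu$ and $\lambda$ uniquely. Hence if $t = 1$, alternative (i) holds. (Note that $x \neq 1$ forces $t \geq 1$.)

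If instead $t \ge 2$, define
\[ y = \prod_{k=1}^{t} (a - t + k,\; a + k), \]
a product of $t$ pairwise disjoint commuting transpositions, hence an involution. Then $y$ itself has unique descent at position $a$, with first run $B = \{1, \ldots, a-t\} \cup \{a+1, \ldots, a+t\}$, so $B_p = p$ for $p \le a - t$ and $B_p = p + t$ for $p > a - t$. The same description gives $N(y) = \{(p, a+k) : B_p \ge p + k\}$, and $B_p \le A_p$ for every $p$: this is trivial for $p \le a - t$, while for $p > a - t$, $A_p$ is the $(p - (a-t))$-th smallest of the $t$ distinct elements of $A \cap \{a+1, \ldots, n+1\}$, hence $A_p \ge a + (p - (a-t)) = p + t = B_p$. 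Therefore $N(y) \subseteq N(x)$, and substituting into the identity $\ell(xy) = \ell(x) + \ell(y) - 2|N(x) \cap N(y^{-1})|$ from Section~\ref{sec:background} (using $y^{-1} = y$) gives $\ell(xy) = \ell(x) - \ell(y)$. Since $t \geq 2$ makes $y$ contain at least two transpositions, $y \neq r_a = (a, a+1)$, so $y_1 = r_a$ and $y_2 = y$ are the two distinct involutions required for (ii).

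The chief delicate point is the equivalence ``sequential $\iff t = 1$'': the forward direction is a straightforward (though slightly tedious) expansion of the cycle structure of $[a-\mu \nearrow a-1,\, a+\lambda \searrow a]$; the reverse requires noting that every permutation with unique descent at $a$ and $t = 1$ differs from the identity on $\{1, \ldots, a\}$ by exactly one swap, uniquely determining $(\mu, \lambda)$. Once this is in hand, the inequality-based description of $N(x)$ makes the construction of $y$ and the verification $N(y) \subseteq N(x)$ essentially immediate.
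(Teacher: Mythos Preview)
Your argument is correct, and it takes a genuinely different route from the paper's. The paper works syntactically with reduced expressions: it fixes a reduced word for $x$, strips off a maximal sequential suffix $[a-\mu \nearrow a-1,\,a+\lambda \searrow a]$, and then case-splits on the preceding letter $[b]$. Depending on where $b$ sits relative to $\{a-\mu-1,\ldots,a+\lambda+1\}$, either $[b]$ (or a neighbour of it) can be commuted to the right end, or, when $b=a$, a short palindromic involution such as $[a,a\pm 1,a]$ or $[a,a-1,a+1,a]$ appears as a suffix. In every case the second involution $y_2$ produced has length at most $4$.

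Your approach is semantic, using the permutation model: you first dispose of the case $|\mathcal{D}_R(x)|\ge 2$, then for single-descent $x$ you give the clean characterisation \emph{sequential $\iff t=1$} in terms of the one-line notation, and for $t\ge 2$ you construct a single involution $y$ with $N(y)\subseteq N(x)$ by comparing the sorted first runs $B_p\le A_p$. This avoids the case analysis on $b$ entirely and isolates the exact combinatorial obstruction to being sequential; on the other hand it leans on the one-line/inversion description specific to $\Sym(n+1)$, whereas the paper's reduced-word manipulations are closer in spirit to the arguments reused in the $\D{n}$ subsection. Both proofs tacitly assume $x\ne 1$ (the lemma is vacuous there), so that is not a defect of your write-up.
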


\begin{proof} Suppose that $x$ is not a sequential element of $W$. Fix a reduced expression for $x$ and let $r_a = [a]$ be the final fundamental reflection in the expression. Since $[a +i]$ and $[a-j]$ commute for $i ,j \geq 1$, it follows that there exist integers $\mu, \lambda \geq 0$ and $b \notin \{a+\lambda+1,a-\mu-1\}$ such that $x$ has a reduced expression of the following form. 
   \[x = [\dots, b, a-\mu \nearrow a-1, a+{\lambda} \searrow a] \]
We now construct $y_1$ and $y_2$ based on the possibilities for $b$. 
Since this is a reduced expression for $x$, it follows that the cases of $b=a-\mu <a$ and $b=a+\lambda>a$ do not occur. 

If $b > a+\lambda+1$ or $b < a-\mu-1$, then the result follows with $y_1=[a]$ and $y_2=[b]$. 
Meanwhile, if $b=a+i$ with $i \in \{1,\ldots, \lambda-1\}$, then take $y_1=[a]$ and $y_2=[a+i+1]$. If $b=a-j$ with $j \in \{1,\ldots, \mu-1\}$, then let $y_1=[a]$ and $y_2=[a-j-1]$.

Finally assume that $b=a$. Since the expression for $x$ is reduced it follows that either $\mu \neq 0$ or $\lambda \neq 0$. Hence there exists a reduced expression for $x$ ending in
\[y_2=\begin{cases} 
[a,a-1,a+1,a] & \text{if }\lambda, \mu \neq 0\\
[a,a-1,a] & \text{if } \lambda=0\\
[a,a+1,a] & \text{if } \mu =0.\\
\end{cases}\]
The result then follows with $y_1=[a]$ and $y_2$ as above.
\end{proof}
Now, for $x$ a sequential element, we determine when $w_0x \in \I(W)$. 
\begin{lemma}\label{lem:cont} Let $x \in W$ be a sequential element. Then $w_0x \in \I(W)$ if and only if $x \in X$.
\end{lemma}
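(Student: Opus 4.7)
The plan is to reformulate $w_0x\in \I(W)$ as $x^{w_0}=x^{-1}$ (which follows from $w_0x=(w_0x)^{-1}=x^{-1}w_0$) and then compare these two elements in the concrete model $W\cong \Sym(n+1)$, where $r_i=(i,i+1)$, $w_0$ is the involution $k\mapsto n+2-k$, and conjugation by $w_0$ sends $r_i$ to $r_{n+1-i}$.

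The $(\Leftarrow)$ direction is immediate: for $x=[i\nearrow(n+1-i)]$, applying $r_j\mapsto r_{n+1-j}$ letter by letter gives $x^{w_0}=[(n+1-i)\searrow i]=x^{-1}$, and the companion family is entirely symmetric.

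For $(\Rightarrow)$ I would work with cycles. A direct computation shows that the sequential element $x=[a-\mu\nearrow a-1,a+\lambda\searrow a]$ acts on $\{1,\ldots,n+1\}$ as the single $(\mu+\lambda+2)$-cycle
\[(a-\mu,a-\mu+1,\ldots,a,a+\lambda+1,a+\lambda,\ldots,a+1),\]
supported on $\{a-\mu, a-\mu+1, \ldots, a+\lambda+1\}$. Consequently $x^{-1}$ is the reverse cycle $(a-\mu,a+1,a+2,\ldots,a+\lambda+1,a,a-1,\ldots,a-\mu+1)$, while $x^{w_0}$ is obtained by applying $k\mapsto n+2-k$ to each entry of $x$'s cycle, yielding the cycle $(c+\mu,c+\mu-1,\ldots,c+1,c,c-\lambda-1,c-\lambda,\ldots,c-1)$ with $c=n+2-a$. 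Assuming $x^{-1}=x^{w_0}$, I would first extract equality of supports (both are intervals), giving the linear relation $2a=n+1+\mu-\lambda$. Then I would compare the images of the minimum entry $a-\mu$: $x^{-1}$ sends it to $a+1$, while $x^{w_0}$ sends it to $c-\lambda$ if $\lambda\geq 1$, or wraps round to $c+\mu$ if $\lambda=0$. Combining either possibility with the support relation forces either $\mu=0$ with $2a+\lambda=n+1$ (giving $x=[(n+1-i)\searrow i]$ with $i=a$), or $\lambda=0$ with $2a-\mu=n+1$ (giving $x=[i\nearrow(n+1-i)]$ with $i=a-\mu$). A quick range check then places $i\in\{1,\ldots,\lceil n/2\rceil\}$, so $x\in X$.

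The point that will require care is the degenerate situations $\mu=0$ or $\lambda=0$, where some of the monotone runs in the cycle collapse and the description of $x^{w_0}$ at the boundary $a-\mu$ wraps around. This bookkeeping is precisely what produces the two symmetric families in $X$, rather than collapsing them to a single family.
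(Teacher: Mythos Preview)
Your argument is correct; the cycle description of a sequential element is right, and the boundary cases $\lambda=0$ and $\mu=0$ that you flag do work out as you say (in particular, when $\lambda=0$ the support relation alone already forces $\mu=2a-n-1$, so no extra information is needed there).

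The paper takes a different route, staying entirely at the level of reduced words rather than passing to the permutation model. It first compares the generator supports $J(x)$ and $J(x^{w_0})$ (the set of indices appearing in a reduced word) to obtain the same relation $\lambda-\mu=n+1-2a$, and then uses a first-letter argument: every reduced expression for $x^{-1}$ begins with $r_a$, while every reduced expression for $x^{w_0}$ begins with $r_{n+1-a-\lambda}$ or $r_{n+1-a+\mu}$, so $a$ must equal one of these, giving $\mu=0$ or $\lambda=0$. So the two proofs share the same two-step skeleton (a support comparison followed by one further rigidity constraint), but with different realisations of each step. Your cycle computation is more elementary and makes the range check on $i$ immediate; the paper's word-theoretic version avoids any concrete model and is phrased in a way that sits more naturally alongside the type~$\D{n}$ arguments later in the section.
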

\begin{proof}
Recall that for $W$ of type $\A{n}$ conjugation by $w_0$ interchanges $r_i$ and $r_{n+1-i}$ for $i \in \{1,\ldots, \lfloor \frac{n}{2} \rfloor\}$.  Consequently for $x \in X$ we have $x^{w_0}=x^{-1}$, and so $w_0x \in \I(W)$.

Let $x \in W$ be a sequential element such that $w_0x \in \I(W)$. Then there exists $a \in \{1,\ldots,n\}$ and $\mu,\lambda \geq 0$ such that  $x=[a-\mu \nearrow a-1, a+\lambda \searrow a]$.
We calculate the possibilities for $\mu$ and $\lambda$. For $w \in W$, let $J(w)$ be the set containing $i \in \{1,\ldots,n\}$ exactly when
$[i]$ is in a reduced expression for $w$. Then $J(x)=J(x^{-1})=J(x^{w_0})$. From $\max J(x)=a+\lambda$ and $\max J(x^{w_0})=n+1-a+\mu$, we deduce that $\lambda-\mu=n+1-2a$.

Every reduced expression for $x$ begins with either $[a+\lambda]$ or $[a-\mu]$, and so every reduced expression for $x^{w_0}$ begins with either $[n+1-a-\lambda]$ or $[n+1-a+\mu]$. Also every reduced expression for $x$ ends in $[a]$, and so a reduced expression for $x^{-1}$ begins with $[a]$. Hence $n+1-2a \in \{\lambda, -\mu\}$.  If $n+1-2a=\lambda$, then $\mu=0$ and so $x=[(n+1-a) \searrow a]$. If $n+1-2a=-\mu$, then $\lambda=0$ and so $x=[(a-\mu) \nearrow (n+1-(a-\mu))]$. Therefore $x \in X$, so proving the lemma.
\end{proof}

We can now prove the main result of this subsection.
\begin{proof}[Proof of Lemma~\ref{lem:pendantAn}]
Let $x \in W$ with $w_0x \in \I(W)$.  
Then by Lemmas~\ref{lem:updown} and \ref{lem:cont} either $x \in X$ or there exist distinct elements $y_1,y_2 \in \Delta_1(w_0x)$. Therefore if $x \notin X$, then $w_0x$ is not pendant.

Let $x=[(n+1-i) \searrow i]$ and $w \in \Delta_1(w_0x)$. Then $\ell(xw)=\ell(x)-\ell(w)$ by Lemma~\ref{lem:equiv}. Therefore $w \in \{ 1
,[i], [i,(i+1)],\ldots, [i \nearrow (n+1-i)]\}$, and since $w \in \I(W)$ it follows that $w=[i]$. The argument for $x = [i \nearrow (n+1-i)]$ is similar. 
\end{proof}
\subsection{$W$ of type $\D{n}$ with $n$ odd} \label{subsec:pendantDn}

In this subsection we suppose that $W$ is of type $\D{n}$ with $n$ odd, and fix the following set of elements of $W$
\[L=\left\{[1],[2],\ldots,[n-2],[n,n-2,n-1], [n-1,n-2,n]\right\}.\]
The next lemma is the main result of this subsection.

\begin{lemma}\label{lem:pendantDn} Let $n \geq 5$. Then the pendant elements in $\E(W)$ are $w_0 y$ for $y \in L$, with
$\Delta_1(w_0 [i])=\{[i]\}$, $\Delta_1(w_0 [n, n-2, n-1])=\{[n-1]\}$ and  $\Delta_1(w_0 [n-1, n-2,n])=\{[n]\}$
for $i \in \{1,\ldots, n-2\}$.
\end{lemma}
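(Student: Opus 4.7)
The plan is to establish both inclusions in the equality $\P(W) = \{w_0 y : y \in L\}$. For the forward direction, I verify $\Delta_1(w_0 y)$ directly for each $y \in L$. For $y = [i]$ with $i \leq n - 2$, $w_0$-conjugation swaps $[n-1] \leftrightarrow [n]$ and fixes the rest, so $y^{w_0} = y = y^{-1}$, giving $w_0 y \in \I(W)$, and Lemma~\ref{lem:w0r} immediately yields $\Delta_1(w_0 y) = \{[i]\}$. For $y = [n, n-2, n-1]$, one checks $y^{w_0} = [n-1, n-2, n] = y^{-1}$, so $w_0 y \in \I(W)$. By Lemma~\ref{lem:equiv}, $\Delta_1(w_0 y)$ consists of the involution suffixes of $y$; since $y$ has length $3$ with unique reduced expression $r_n r_{n-2} r_{n-1}$ and is not itself an involution, this set equals $\{[n-1]\}$. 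The case $y = [n-1, n-2, n]$ is symmetric.

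For the backward direction, suppose $w_0 y$ is pendant. By Lemma~\ref{lem:fundneighbour} its unique neighbor is some $r = [i] \in R$, and by Lemma~\ref{lem:equiv} this $r$ is the unique involution suffix of $y$. In particular, $r$ is the unique right descent of $y$, and $y$ is not itself an involution of length $\geq 2$ (otherwise $y$ would be a second involution suffix). The relation $y^{w_0} = y^{-1}$ forces the unique left descent of $y$ to be $[w_0(i)]$, where $w_0$ acts on indices by swapping $n-1 \leftrightarrow n$ and fixing the rest. If $\ell(y) = 1$ this gives $y \in \{[1], \ldots, [n-2]\} \subseteq L$. For $\ell(y) \geq 2$, use Lemma~\ref{lem:cosetreps} to decompose $y = uz$ with $u \in W_J$, $z \in X_J$ and $\ell(y) = \ell(u) + \ell(z)$, where $J = R \setminus \{[n]\}$. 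If $z = 1$, then $y \in W_J$, which is of type $\A{n-1}$, and its involution suffixes in $W$ coincide with those in $W_J$; Lemma~\ref{lem:updown} applied inside $W_J$ forces $y$ to be sequential there, and combined with $y^{w_0} = y^{-1}$ plus the fact that $y$ does not use $[n]$, the only surviving sequential elements are single reflections, already covered. If $z \neq 1$, then since $z \in X_J$ begins with $[n]$, any non-trivial $u$ would contribute a left descent of $y$ in $J$; in the subcase where the forced left descent is $[n]$ (namely $r = [n-1]$), $u$ must be $1$ and Theorem~\ref{Dncosetreps} applies directly to $y$. A case analysis across the three prefix types for $z$, using the right-descent condition ($y$ ends in $[n-1]$) and the absence of any longer involution suffix, eliminates all but $y = [n, n-2, n-1]$. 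The symmetric subcase ($r = [n]$ with forced left descent $[n-1]$) uses the analogous decomposition with $J' = R \setminus \{[n-1]\}$ and yields $y = [n-1, n-2, n]$. The remaining subcase $r = [i]$ with $i \leq n-2$ makes the left and right descents both $[i]$, so $y$ starts and ends with $[i]$; Theorem~\ref{Dncosetreps} combined with this palindromic framing then exhibits, in each non-trivial configuration, both a length-one and a strictly longer involution as simultaneous suffixes of $y$, contradicting the pendant hypothesis.

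The main obstacle is the elimination inside Theorem~\ref{Dncosetreps}'s Case (ii) prefix $a = [n, n-2, n-3, n-1, n-2, n]$, and more generally the handling of non-trivial $b$-parts in Case (i): in each such configuration one must produce an explicit second involution suffix of $y$. This requires careful manipulation of reduced expressions using the braid and commutation relations around the branching vertex of the $\D{n}$ Dynkin diagram, and is where the technical bulk of the proof resides.
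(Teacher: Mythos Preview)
Your forward direction is essentially identical to the paper's: both use Lemma~\ref{lem:w0r} for $[i]$ with $i\le n-2$ and a direct check via Lemma~\ref{lem:equiv} for $[n,n-2,n-1]$ and $[n-1,n-2,n]$.

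For the backward direction you and the paper share the same skeleton---reduce via Lemma~\ref{lem:equiv} to showing that any $y\notin L$ with $y^{w_0}=y^{-1}$ has at least two involution suffixes, decompose $y$ relative to the type-$\A{n-1}$ parabolic $W_J$ with $J=R\setminus\{[n]\}$, and invoke Theorem~\ref{Dncosetreps}---but you organize the case analysis differently. The paper writes $x=uv$ with $u\in{}_J X$ and $v\in W_J$, so the parabolic factor sits at the \emph{end} of $x$; Lemma~\ref{lem:updown} then applies directly to $v$ and instantly reduces to $v$ sequential (Lemma~\ref{2orseq}), after which Lemma~\ref{lem:Dnatleast2} carries out a four-way split on the sequential parameters $(a,\lambda,\mu)$ of $v$. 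You instead write $y=uz$ with $u\in W_J$ on the \emph{left} and $z\in X_J$ on the right, and split on the value of the unique right descent $r$.

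This reorganization is legitimate, but it costs you the clean reduction step. In the paper's setup, suffixes of $x$ are governed by $v\in W_J$, so the type-$\A{}$ lemma does real work. In your setup, suffixes of $y$ are governed by $z\in X_J$, where Lemma~\ref{lem:updown} does not apply, and you must instead push through the structure of Theorem~\ref{Dncosetreps} directly. Your subcase $r=[n-1]$ (forcing $u=1$, $y=z$) is genuinely parallel to the paper's and goes through; the symmetric subcase $r=[n]$ is fine. But your subcase $r=[i]$ with $i\le n-2$ and $z\ne 1$ is where you wave your hands: ``palindromic framing'' plus Theorem~\ref{Dncosetreps} for $z$ does not obviously produce a second involution suffix of $y=uz$, because the tail of $y$ is a mixture of $u$ and $z$ and Theorem~\ref{Dncosetreps} only constrains $z$. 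This is exactly the content of the paper's Lemma~\ref{lem:Dnatleast2}, which is a genuinely intricate four-case computation, and your proposal does not indicate how you would replicate that work in your framing. There is no fatal obstruction---the relation $y^{w_0}=y^{-1}$ lets you trade prefixes for suffixes and thereby recover something like the paper's argument---but as written this subcase is a gap rather than a sketch.
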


Recall that conjugating by $w_0$ interchanges $[n-1]$ and $[n]$, and fixes all the other $[i]$. 
Let $J=\{1,\ldots,n-1\}$. Then $W_J$ is a parabolic subgroup of $W$ of type $\A{n-1}$. For the remainder of this section fix $x \in W \backslash L$ such that $w_0x \in \I(W)$. By Lemma~\ref{lem:cosetreps}, there exists $u \in$ $_JX$ and $v \in W_J$ such that $x=uv$ with $\ell(x)=\ell(u)+\ell(v)$. We begin with a preliminary lemma. We will use the shorthand $[\dots, i_1, i_2, \ldots, i_j]$ to mean a reduced expression ending with $r_{i_1}r_{i_2}\cdots r_{i_j}$.

\begin{lemma}\label{lem:uv} \begin{enumerate}[label=\rm{(\roman*)}]
\item If $\ell(u) \geq 1$, then $u =[ \dots, n]$. If $\ell(u) \geq 2$, then $u = [ \dots, n-2, n]$.
\item Suppose that $x = [r]z$ with $r \in R$ and $z \in W$, is a reduced expression for $x$. Then $x$ also has a reduced expression $(z^{w_0})^{-1}[r^{w_0}]$.
\item If $v \in W_{J \setminus \{n-1\}}$, then $u\not= [n]$.
\item If $u = 1$, then $x = v \in \I(W_{J\setminus \{n-1\}}).$
\end{enumerate}
\end{lemma}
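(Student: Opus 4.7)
The plan is to treat the four parts in order, leveraging Theorem~\ref{Dncosetreps} for (i), the identity $x^{w_0}=x^{-1}$ for (ii), uniqueness of the ${}_JX\cdot W_J$ decomposition for (iii), and a support argument for (iv).

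For (i), by Lemma~\ref{lem:cosetreps}(ii), $u^{-1}\in X_J$, so Theorem~\ref{Dncosetreps} supplies one of three possible shapes for a reduced expression of $u^{-1}$: $[n]\,b$, $[n,n-2,n-1]\,b$ with $b\in W_{\{1,\ldots,n-2\}}$, or a reduced expression beginning with $[n,n-2,n-3,n-1,n-2,n]$. Inverting, $u$ has a reduced expression ending in $[n]$, in $[n-1,n-2,n]$, or in $[n,n-2,n-1,n-3,n-2,n]$; in each case the final letter is $[n]$, proving the first claim. When $\ell(u)\geq 2$, the second and third shapes already end in $[n-2,n]$. The remaining shape $u=b^{-1}[n]$ with reduced expression $b=r_{i_1}\cdots r_{i_m}$ of positive length is handled by noting that if $i_1\neq n-2$, then $r_{i_1}$ commutes with $[n]$, so $ur_{i_1}=r_{i_m}\cdots r_{i_2}[n]$ has length $\ell(u)-1$, contradicting $u\in{}_JX$ since $r_{i_1}\in J$. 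Hence $i_1=n-2$, and $u$ ends in $[n-2,n]$.

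For (ii), squaring $w_0x$ yields $x^{w_0}=x^{-1}$. Applying $w_0$-conjugation to $x=[r]z$ gives $x^{-1}=[r^{w_0}]z^{w_0}$, and inverting produces the candidate expression $x=(z^{w_0})^{-1}[r^{w_0}]$. Conjugation by $w_0$ permutes the simple reflections and hence preserves length, so $\ell((z^{w_0})^{-1})+1=\ell(z)+1=\ell(x)$, confirming the expression is reduced.

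For (iii), I argue by contradiction. Suppose $u=[n]$ and $v\in W_{J\setminus\{n-1\}}=W_{\{1,\ldots,n-2\}}$. Because $w_0$-conjugation fixes each $r_i$ with $i\leq n-2$ and sends $[n]$ to $[n-1]$, we have $v^{w_0}=v$ and $[n]^{w_0}=[n-1]$; applying (ii) to the reduced expression $x=[n]v$ produces the reduced expression $x=v^{-1}[n-1]$. Both $v^{-1}$ and $[n-1]$ lie in $W_J$, so $x\in W_J$. But uniqueness of the decomposition $x=u'v'$ with $u'\in{}_JX$ and $v'\in W_J$ (Lemma~\ref{lem:cosetreps}) forces $u'=1$, contradicting $u=[n]$. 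For (iv), when $u=1$ we have $x=v\in W_J$, so $n\notin\supp{x}$. The identity $x^{w_0}=x^{-1}$ combined with $\supp{x^{-1}}=\supp{x}$ shows that $\supp{x}$ is stable under the swap $n-1\leftrightarrow n$ induced by $w_0$-conjugation; together with $n\notin\supp{x}$ this yields $n-1\notin\supp{x}$, so $x\in W_{\{1,\ldots,n-2\}}$. On this subgroup $w_0$-conjugation is trivial, hence $x^{w_0}=x=x^{-1}$; since in the ambient application (classifying pendants, for which $w_0x\neq w_0$) we have $x\neq 1$, we obtain $x\in\I(W_{J\setminus\{n-1\}})$.

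The bookkeeping in (i) is the longest step, as it requires verifying that the ${}_JX$-condition rules out every leading letter of $b$ except $r_{n-2}$; (iii) is the conceptually sharpest step but collapses to a short argument once (ii) and the uniqueness part of Lemma~\ref{lem:cosetreps} are in hand.
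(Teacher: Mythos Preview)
Your proof is correct. Parts (ii) and (iii) match the paper's argument essentially verbatim: both derive $x=(z^{w_0})^{-1}[r^{w_0}]$ from $x^{w_0}=x^{-1}$, and both obtain the contradiction in (iii) by showing $x\in W_J$ (the paper phrases it as ``forcing $[n]\in W_J$'' rather than invoking decomposition uniqueness, but these are equivalent).

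Parts (i) and (iv) take somewhat different routes. For (i), the paper simply cites Algorithm~B of Geck--Pfeiffer; you instead feed $u^{-1}$ through Theorem~\ref{Dncosetreps} and then add the small commutation argument to force $i_1=n-2$ in the $b^{-1}[n]$ case. This is more explicit but ultimately draws on the same underlying description of $X_J$. For (iv), the paper argues by contradiction via the decomposition $x=c[n-1]e$ with $c,e\in W_{J\setminus\{n-1\}}$ (Geck--Pfeiffer, Example~2.2.4), conjugating to produce an illicit occurrence of $[n]$ in $W_J$. Your support argument---that $\supp{x}$ is $w_0$-stable and excludes $n$, hence excludes $n-1$---is more elementary and avoids the external reference entirely; it is a cleaner way to reach the same conclusion. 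Both proofs share the same minor boundary issue that the statement implicitly assumes $x\neq 1$, which you at least flag explicitly.
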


\begin{proof} Part (i) follows from Algorithm B described in \cite[\S 2.1]{GeckPfeiffer}. For (ii) we have $[r^{w_0}]z^{w_0} = x^{w_0} = x^{-1}$ and so $x = (z^{w_0})^{-1}[r^{w_0}]$.

Suppose $u = [n]$, and so $x = [n]v$. Then, as $v \in W_{J \setminus \{n-1\}}$, by (ii) $x = v^{-1}[n-1] \in W_J$, forcing $[n] \in W_J$, a contradiction. So  (iii) holds.

Assume that $x = v \not\in W_{J\setminus \{n-1\}}$. Then we may write $x = c[n-1]e$ where $c,e \in W_{J\setminus \{n-1\}}$ (see \cite[Example 2.2.4]{GeckPfeiffer}).  Now $x^{-1} = x^{w_0} = c[n]e,$ whence, as $x^{-1} \in W_J$, $[n] \in W_J$, a contradiction. Thus $x = v \in W_{J\setminus \{n-1\}}$ and then $x^{-1} = x^{w_0} = x$ means $x \in \I(W_{J\setminus \{n-1\}})$.
\end{proof}

\begin{lemma}\label{xequ} Suppose that $v = 1$ (so $x = u$).  Then $x$ has a reduced expression of the form $b[n,n-2,n-3,n-1,n-2,n]$ for some $b \in W$. In particular, $[n],  [n-3,n-1]^{[n-2,n]} \in \Delta_1(w_0x)$.
\end{lemma}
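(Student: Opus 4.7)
My plan is to apply Theorem~\ref{Dncosetreps} to $x^{-1}$, which lies in $X_J$ by Lemma~\ref{lem:cosetreps}(ii) since $x \in {}_JX$, and then to eliminate case~(i) of that theorem so that only case~(ii) remains. Three facts will be used throughout: the hypothesis $w_0x \in \I(W)$ is equivalent to $x^{w_0}=x^{-1}$, hence $(x^{-1})^{w_0}=x$; for $n$ odd, $w_0$-conjugation is the diagram automorphism swapping $r_{n-1}$ and $r_n$ and fixing every other fundamental reflection, so in particular $b^{w_0}=b$ for every $b \in W_{\{1,\ldots,n-2\}}$; and $r_{n-1}$ commutes with $r_{n-3}$ in the $\D{n}$ diagram. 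If case~(ii) of Theorem~\ref{Dncosetreps} applies then $x^{-1} = [n,n-2,n-3,n-1,n-2,n]\,c$ reduced for some $c$, so inverting gives $x = c^{-1}[n,n-2,n-1,n-3,n-2,n]$, and commuting the interior pair $[n-1,n-3]$ to $[n-3,n-1]$ yields $x = b[n,n-2,n-3,n-1,n-2,n]$ with $b := c^{-1}$.

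To rule out case~(i), write $x^{-1} = ab$ reduced with $a \in \{[n],[n,n-2,n-1]\}$ and $b \in W_{\{1,\ldots,n-2\}}$. Applying $(\cdot)^{w_0}$ to both sides gives $x = (x^{-1})^{w_0} = a^{w_0}\,b$, and inverting furnishes a second reduced expression $x^{-1} = b^{-1}(a^{w_0})^{-1}$. If $b \neq 1$, then the leftmost letter of this new expression lies in $\{r_1,\ldots,r_{n-2}\} \subseteq J$, contradicting $x^{-1} \in X_J$. If $b = 1$, then case~(i.a) yields the two incompatible equalities $x^{-1}=[n]$ and $x^{-1}=[n-1]$, while case~(i.b) yields $x = a^{w_0} = [n-1,n-2,n] \in L$, contradicting $x \notin L$.

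For the ``in particular'' claim I will invoke Lemma~\ref{lem:equiv}, which reduces matters to verifying $\ell(xw) = \ell(x)-\ell(w)$ for each of the two listed involutions. For $w = [n]$ this is immediate, since the reduced expression $x = b[n,n-2,n-3,n-1,n-2,n]$ ends in $r_n$. For $w = [n-3,n-1]^{[n-2,n]}$, expanding the conjugate and using that $r_n$ commutes with each of $r_{n-3}$ and $r_{n-1}$ yields $w = [n,n-2,n-3,n-1,n-2,n]$; this six-letter expression is the final segment of a reduced expression for $x$, hence itself reduced, so $w$ is an involution of length $6$ and $xw = bw^2 = b$ has length $\ell(x)-6 = \ell(x)-\ell(w)$. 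The two neighbours are distinct, having lengths $1$ and $6$. The delicate step in the argument is the elimination of case~(i.b): at first sight the expression $[n,n-2,n-1]\,b$ is not visibly incompatible with $X_J$, since it begins with $r_n \notin J$; the contradiction only emerges after extracting the $w_0$-symmetrised companion expression, which moves $b^{-1} \in W_J$ to the front.
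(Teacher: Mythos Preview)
Your proof is correct and follows essentially the same route as the paper: both arguments apply Theorem~\ref{Dncosetreps} (the paper via the left-coset analogue applied to $x$, you via the right-coset version applied to $x^{-1}$) and eliminate alternative~(i) by exploiting $x^{w_0}=x^{-1}$ together with $b^{w_0}=b$ to produce a reduced expression beginning in $J$, contradicting the distinguished-coset property. Your treatment of the $b=1$ subcase is in fact slightly more careful than the paper's (which asserts $[n]\in L$, whereas you derive the contradiction $[n]=[n-1]$ directly from the involution hypothesis), and you spell out the ``in particular'' clause via Lemma~\ref{lem:equiv}, which the paper leaves implicit.
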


\begin{proof} The analogue of Theorem~\ref{Dncosetreps} for distinguished left coset representatives gives that $x=ba$ where either (i)  $a \in \{ [n], [n - 1,n-2, n]\}$ and $b \in W_{\{1,\ldots,n-2\}}$; or (ii) $a=[n,n-2,n-3,n-1,n-2,n]$. Suppose for a contradiction that (i) holds. 

By assumption $x \notin L \supseteq \{[n],[n-1,n-2,n]\}$, and so $b\neq 1$. From $w_0x \in \I(W)$ it follows that $x^{w_0}=x^{-1}$, and consequently by Lemma~\ref{lem:cosetreps}(ii), $x^{w_0}$ is a distinguished right coset representative for $W_J$ in $W$. However $b \in W_{R \backslash \{n-1,n\}}$, and so $x^{w_0} = b^{w_0}a^{w_0} = ba^{w_0}$ cannot be such a distinguished right coset representative. Therefore, alternative (ii) must hold, which proves the lemma.
\end{proof}

\begin{lemma} \label{2orseq} Either $|\Delta_1(w_0x)|\geq 2$ or $v$ is a sequential element of $W_J$ and $\ell(u) \geq 1$ (so $u = [ \dots , n]$).
\end{lemma}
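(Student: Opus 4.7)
My plan is to prove the contrapositive: if $\ell(u) = 0$ or $v$ is not sequential in $W_J$, then $|\Delta_1(w_0 x)| \geq 2$. By Lemma~\ref{lem:equiv}, it suffices in each case to exhibit two distinct $y_1, y_2 \in \I(W)$ with $\ell(xy_j) = \ell(x) - \ell(y_j)$. I would split into three subcases: $\ell(u) = 0$; $\ell(u) \geq 1$ with $v = 1$; and $\ell(u) \geq 1$ with $v$ a non-identity non-sequential element of $W_J$.

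Suppose first that $\ell(u) = 0$, so $x = v$. Lemma~\ref{lem:uv}(iv) places $x$ in $\I(W_{J \setminus \{n-1\}})$, a standard parabolic of type $\A{n-2}$. The key preparatory observation is that a sequential element $[a-\mu \nearrow a-1,\, a+\lambda \searrow a]$ of a type-$\A{}$ Coxeter group acts as a single $(\mu+\lambda+2)$-cycle, and hence is an involution precisely when $\mu = \lambda = 0$; that is, the sequential involutions in type $\A{}$ are exactly the fundamental reflections $[a]$. Since $x \notin L \supseteq \{[1], \ldots, [n-2]\}$, $x$ is a non-sequential, non-identity element of $W_{J \setminus \{n-1\}}$, and I would then invoke Lemma~\ref{lem:updown} inside $W_{J \setminus \{n-1\}}$ to produce the required $y_1, y_2 \in \I(W_{J \setminus \{n-1\}}) \subseteq \I(W)$.

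The second subcase, $\ell(u) \geq 1$ with $v = 1$, is immediate from Lemma~\ref{xequ}, which already exhibits $[n]$ and $[n-3, n-1]^{[n-2, n]}$ as two distinct members of $\Delta_1(w_0 x)$. For the third subcase, with $\ell(u) \geq 1$ and $v \neq 1$ non-sequential, I would apply Lemma~\ref{lem:updown} inside $W_J$ (of type $\A{n-1}$) to obtain distinct $y_1, y_2 \in \I(W_J)$ with $\ell(v y_j) = \ell(v) - \ell(y_j)$. The delicate point is to transfer this identity from $v$ to $x = uv$: since $v y_j \in W_J$ and $u \in {}_J X$, the uniqueness in Lemma~\ref{lem:cosetreps} (applied to the inverse decomposition, or equivalently to the $_JX \cdot W_J$ factorisation) forces
\[
\ell(xy_j) = \ell(uvy_j) = \ell(u) + \ell(vy_j) = \ell(u) + \ell(v) - \ell(y_j) = \ell(x) - \ell(y_j),
\]
and Lemma~\ref{lem:equiv} concludes.

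The main obstacle, I anticipate, is the small combinatorial identification of sequential involutions in type $\A{}$ as precisely the fundamental reflections, together with the bookkeeping that ensures the $(u,v)$-decomposition of $x$ persists under right multiplication by $y_j \in W_J$; both steps are elementary but need to be stated cleanly for the length identities to line up.
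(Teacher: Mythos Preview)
Your proof is correct and follows the same three-case split as the paper, invoking Lemma~\ref{xequ} for $v=1$ and Lemma~\ref{lem:updown} for $v$ non-sequential with $u\neq 1$, exactly as the paper does. The only divergence is in the $u=1$ case: the paper observes that $x=v$ is itself an involution (by Lemma~\ref{lem:uv}(iv)) not in $R$, so $x\in\Delta_1(w_0x)$ trivially (since $\ell(xx)=0=\ell(x)-\ell(x)$) and any $r\in R$ with $\ell(xr)<\ell(x)$ gives the second neighbour; you instead establish that sequential involutions in type $\A{}$ are precisely the fundamental reflections and then feed $x$ back into Lemma~\ref{lem:updown}. Your route is a little longer but perfectly valid, and your cycle-structure argument for the auxiliary claim is sound. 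Your explicit length bookkeeping $\ell(uvy_j)=\ell(u)+\ell(vy_j)$ via the $_JX\cdot W_J$ factorisation is a point the paper leaves implicit, so making it visible is a plus.
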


\begin{proof}
If $v=1$, then $|\Delta_1(w_0x)|\geq 2$ by Lemma~\ref{xequ}. If $u=1$, then $v \in \I(W_{J \setminus \{n-1\}})$ by Lemma~\ref{lem:uv}(iv). In particular, $v \neq [n-1],[n]$ which combined with $v=x \notin L$ implies that $v  \in \I(W) \backslash R$. By Lemma~\ref{lem:ex} there exists $r \in R$ such that $\ell(vr) = \ell(v) -1$. Thus by Lemma~\ref{lem:equiv} $v$ and $r$ are distinct elements of $\Delta_1(w_0x)$. Hence we may assume that $u,v \neq 1$, and so the result follows by Lemmas~\ref{lem:equiv} and \ref{lem:updown}.
\end{proof}

\begin{lemma}\label{lem:Dnatleast2}  If $x \not \in L$ and $w_0x  \in \I(W)$, then $|\Delta_1(w_0x)| \geq 2$.
\end{lemma}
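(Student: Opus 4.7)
My plan is as follows. By Lemmas~\ref{xequ}, \ref{lem:uv}(iv), and~\ref{2orseq}, I may assume $v$ is a sequential element of $W_J$, say $v = [a-\mu \nearrow a-1, a+\lambda \searrow a]$, and $\ell(u) \geq 1$; then by Lemma~\ref{lem:uv}(i), $u$ ends in $[n]$. By Lemma~\ref{lem:equiv}, the reflection $[a]$ lies in $\Delta_1(w_0x)$, so it suffices to exhibit a second involution in $\Delta_1(w_0x)$ distinct from $[a]$. Since $a \leq n-1$, the reflection $[n]$ differs from $[a]$, and I shall show that $x$ has a reduced expression ending in $[n]$; this gives $[n]$ as the second neighbor.

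The key fact is that $[n]$ commutes with every fundamental reflection except $[n-2]$. If $[n-2] \notin \supp{v}$, the trailing $[n]$ of $u$ slides rightward past $v$ to the end of $x$. Otherwise $[n-2] \in \supp{v}$, and since $v \in W_J$, we have $a+\lambda \in \{n-2, n-1\}$. In both these sub-cases I would apply the braid $r_{n-2}r_nr_{n-2} = r_nr_{n-2}r_n$ together with commutations, exploiting the fact that the ``up'' letters $[a-\mu], \ldots, [a-1]$ have indices $\leq n-3$ and therefore commute with $[n-1]$, $[n-2]$ and $[n]$. Moving the ``up'' portion of $v$ aside and then using the braid to transport $[n]$ past the leading $[n-2]$ of $v$'s ``down'' portion, subsequent commutations carry $[n]$ through the remaining letters (all of index $\leq n-3$) to the very end.

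The hard part is the boundary situation $a+\lambda = n-2$ with $\lambda = 0$, so that $a = n-2$ and $v = [n-2-\mu, \ldots, n-3, n-2]$ for some $\mu \geq 1$. Here the braid-commute strategy stalls, and one can check that every reduced expression for $x$ ends in $[n-2]$, so that $D_R(x) = \{[n-2]\}$. My plan is to show this subcase is vacuous. Since $x^{w_0} = x^{-1}$ forces the support of $x$ to be balanced with respect to $\{[n-1], [n]\}$, and $[n] \in \supp{u} \subseteq \supp{x}$, we have $[n-1] \in \supp{x}$; as $[n-1] \notin \supp{v}$, we obtain $[n-1] \in \supp{u}$. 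The left-coset analogue of Theorem~\ref{Dncosetreps} then forces $u$ to end in $[n-1, n-2, n]$ or in $[n, n-2, n-3, n-1, n-2, n]$. A left-descent analysis of $u$ (for instance, in the first case the identity $r_{n-1} \cdot b[n-1,n-2,n] = b[n-2,n]$ for $b$ avoiding $[n-2]$ places $[n-1]$ in $D_L(x)$, and the remaining case of $b$ involving $[n-2]$ is handled analogously by direct computation; the second ending is treated similarly with $[n]$ playing the role of $[n-1]$) exhibits a left descent of $x$ outside $\{[n-2]\}$. But $x^{w_0} = x^{-1}$ together with the fact that $w_0$-conjugation fixes $[n-2]$ forces $D_L(x) = w_0(D_R(x)) = \{[n-2]\}$, a contradiction.

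Consequently, in all relevant cases $x$ has a reduced expression ending in $[n]$, furnishing the second involution neighbor of $w_0x$ and completing the proof.
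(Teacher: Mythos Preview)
Your strategy---showing that $[a]$ and $[n]$ both lie in $\Delta_1(w_0x)$---does not work in general. The error is the assertion that the ``up'' letters $[a-\mu],\dots,[a-1]$, having indices $\leq n-3$, commute with $[n-2]$: the reflection $[n-3]$ does \emph{not} commute with $[n-2]$, and this blocks the braid manoeuvre exactly when $a=n-2$ and $\mu\geq 1$. You correctly flag the subcase $\lambda=0$ as hard, but the companion subcase $\lambda=1$ (so $a+\lambda=n-1$, $a=n-2$, $\mu\geq 1$) is equally obstructed and you treat it as routine.

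Here is a concrete counterexample. Take $n=5$ and $x=[3,5,2,4,3]$, so $u=[3,5]\in{}_JX$ and $v=[2,4,3]$ is sequential with $a=3=n-2$, $\lambda=1$, $\mu=1$. One checks directly that $x^{w_0}=[3,4,2,5,3]=x^{-1}$, so $w_0x\in\I(W)$, and clearly $x\notin L$. Enumerating reduced expressions (or computing $x\cdot\alpha_5=e_2-e_5\in\Phi^+$) shows $D_R(x)=\{[3]\}=\{[n-2]\}$; in particular $[n]\notin\Delta_1(w_0x)$. So no amount of braid--commute rewriting will produce $[n]$ at the right end. (In fact $x$ is itself an involution, equal to $[n,n-3,n-1]^{[n-2]}$, and the paper uses precisely this longer involution as the second neighbour in its case~(ii).)

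This is why the paper's proof does not aim for a uniform second neighbour: it splits into four cases and in each exhibits a tailored involution---often of length $3$, $4$ or $5$---ending a reduced expression for $x$. Your ``hard case'' analysis (vacuity via a descent-set contradiction) is separately sketchy: the claim $D_R(x)=\{[n-2]\}$ there is asserted rather than proved, and the left-descent argument for the various endings of $u$ is incomplete. But the decisive failure is already in the ``easy'' cases, as the example above shows. A repair would require, at minimum, handling $a=n-2$ with $\lambda=1$ and $a=n-1$ separately and abandoning the hope that the second neighbour is always a simple reflection.
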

\begin{proof} By Lemma~\ref{2orseq} we may assume that $v$ is a sequential element of $W_J$ and $u = [ \dots,n]$. So for some $a \in \{1,\ldots,n-1\}$ and $\lambda,\mu \geq 0$ we have
\[x = [\dots,n,a-\mu \nearrow a-1, a+{\lambda} \searrow a].\]

We split into four cases: (i) $v \in W_{R \backslash \{n-1\}}$; 
(ii) $a=n-2$; (iii) $a=n-1$ and $\mu \geq 1$; and (iv) $a \leq n-3$ and $a+\lambda \in \{n-1,n-2\}$.
 
In case (i), the elements $v$ and $[n]$ commute,  and hence $[a],[n] \in \Delta_1(w_0x)$. 

Suppose case (ii) holds. Then $\lambda \in \{0,1\}$. If $\lambda=0$, then $u \neq [n]$ by Lemma~\ref{lem:uv}(iii). If $\lambda=1$, then $u\neq [n]$ since it can be checked that $[n]v([n]v)^{w_0}\neq 1$. Therefore $u = [\dots, n-2,n]$ by Lemma~\ref{lem:uv}(i). The result then follows with $\Delta_1(w_0x)$ containing $[n-2]$ and one of the following.
\[\begin{cases}
[n]^{[n-2]} & \text{ if } \lambda=0,\mu=0\\
[n, n-3]^{[n-2]} & \text{ if } \lambda=0,\mu\geq 1\\
[n, n-1]^{[n-2]} & \text{ if } \lambda=1,\mu=0\\
[n,n-3,n-1]^{[n-2]} & \text{ if } \lambda=1,\mu\geq 1
\end{cases}\]

Next assume that (iii) holds. If $\mu=1$, then $v=[n-2,n-1]$. Since $x \notin L \supseteq \{[n,n-2,n-1]\}$, Lemma~\ref{lem:uv}(i) implies that 
\[x=[\ldots,n-2,n][n-2,n-1]=[\ldots,n,n-2,n,n-1].\]
Hence $[n-1],[n] \in \Delta_1(w_0x)$. If $\mu \geq 2$, then by Lemma~\ref{Dncosetreps} applied to $u^{-1}$, either $u=[\ldots,n-1,n-2,n]$ or $u=b[n]$ with $b \in W_{\{1,\ldots,n-2\}}$. In the first case
\begin{align*}
x&=[\dots ,n-1,n-2,n,n-1-\mu \nearrow n-1]\\
&=[\dots ,n-\mu-1 \nearrow n-4,n-1,n-2,n,n-3,n-2,n-1],
\end{align*}
and so $[n-1],[n-3,n]^{[n-2,n-1]} \in \Delta_1(w_0x)$. 
In the second, either $b=1$ or there exists $k \in \{1,\ldots,n-2\}$ such that $\ell([k]b)=\ell(b)-1$.
Thus by Lemma~\ref{lem:uv}(ii) $\Delta_1(w_0x)$ contains $[n-1],[n-\mu-1]$ if $b=1$, and $[n-1],[k]$ otherwise.

Finally assume that (iv) holds. If $a+\lambda=n-2$ or $a+\lambda=n-1$ and $u=[\ldots,n-1,n-2,n]$ then it can be checked that $[n],[a] \in \Delta_1(w_0x)$. 
Hence we may assume that $a+\lambda=n-1$ and by applying Lemma~\ref{Dncosetreps} to $u^{-1}$ that $u=b[n]$ for some $b \in W_{\{1,\ldots,n-2\}}$. Therefore there exists $k \in \{1,\ldots,n-2,n\}$ such that $\ell([k]u)=\ell(u)-1$. By Lemma~\ref{lem:uv}(ii) $[k],[a] \in \Delta_1(w_0x)$. Thus if $k \neq a$ then the results follows. Hence assume that $k=a$ and by \cite[Algorithm B]{GeckPfeiffer} $u=[\dots, a \nearrow n-2,n]$. Hence
\begin{align*}
x&=[\dots, a \nearrow n-2,n,a-\mu \nearrow a-1,n-1 \searrow a]\\
&=[\dots, a-\mu \nearrow a-2,a \nearrow n-2, n,a-1,n-1 \searrow a]\\
&=[\dots, a-\mu \nearrow a-2][n,a-1,n-1]^{[n-1 \searrow a]}.
\end{align*}
Thus $[a], [n,a-1,n-1]^{[n-1 \searrow a]} \in \Delta_1(w_0x)$, completing the result.
\end{proof}

\begin{proof}[Proof of Lemma~\ref{lem:pendantDn}] Let $x \in W$ be such that $w_0x \in \I(W)$.
If $x \notin L$, then it follows by Lemmas~\ref{2orseq} and \ref{lem:Dnatleast2} that $|\Delta_1(w_0x)| \geq 2$. 

For $w_0x$ with $x \in \{[1],\ldots,[n-2]\}$, the result follows by Lemma~\ref{lem:w0r}. Let $x=[n,n-2,n-1]$ and let $w \in \Delta_1(w_0x)$. By Lemma~\ref{lem:equiv} $\ell(xw)=\ell(x)-\ell(w)$, and so $w \in \I(\langle [n-2],[n-1],[n] \rangle) = \{[n-2],[n-1],[n],[n-2,n-1,n-2],[n-2,n,n-2]\}$. It is easily checked that $[n-1]$   is the only possibility for $w$.  The result for $[n-1,n-2,n]$ follows similarly, and so the lemma holds.
\end{proof}

\begin{proof}[Proofs of Theorem~\ref{thm:pendantelts} and Corollary~\ref{cor:lwn}]
The finite irreducible Coxeter groups with non-trivial centre are covered by Lemma~\ref{lem:pendantw0central}. The remaining cases are type $\A{n}$, type $\D{n}$ with $n$ odd, type $\mathrm{E}_6$ and type $I_2(m)$ with $m$ odd. These are resolved, respectively, by Lemmas~\ref{lem:pendantAn}, \ref{lem:pendantDn}, \ref{lem:I2m}, and \ref{E6}. 
\end{proof}

\section{Exceptional finite Coxeter groups}\label{sec:exceptional}
Many of the results in this paper had their origins in extensive \textsc{Magma} calculations which revealed unexpected patterns in the structure of their excess zero graphs. In order to facilitate future research on these graphs, we have included below the valency data for four of the small exceptional finite Coxeter groups.

\begin{table}[h]
\begin{tabular}{|c|c|}
\hline
Group & Valency distribution\\
\hline
$W(\H{3})$ & $0^1.1^3.2^4.3^5.4^4.5^5.7^2.8^2.9^2.15^3 $\\
\hline
$W(\H{4})$ & $0^1. 1^{4}.2^{8}.3^{12}.4^{23}.5^{23}.6^{27}.7^{26}.8^{38}.9^{24}.10^{19}.11^{23}.12^{25}.13^{22}.14^ {30}.15^{30}.16^{16}.17^{14}.18^{18}.19^{12}.$\\
& $20^{7}.21^{15}.22^{10}.23^{5}.24^{11}.25^{8}.26^{5}.27^{8}.28^{7}.29^{3}.30^{5}.31^{6}.
32^{4}.33^{5}.34^{3}.35^{2}.36^{3}.37^1.38^2.39^3.$\\
& $40^1.42^3.43^4.44^1.45^2.46^2.47^2.48^2.49^1.50^5.51^1.54^3.55^1. 59^1.61^2.62^3.63^1.65^2.70^1.$\\

& $71^1.79^1.81^1.82^1.83^1.87^2.89^2.97^2.99^1.119^1.122^1.137^2.143^3.173^2.285^4 $\\
\hline
$W(\F{4})$ & $0^1.1^4.2^8.3^9.4^{17}.5^{11}.6^9.7^9.8^{10}.9^{13}.10^2.11^6.12^4.13^4.14^4.15^2.17^4.18^1.19^2.21^2.23^2.25^2.$\\
&$29^2.30^2.34^3.37^2.69^4$\\
\hline
$W(\mathrm{E}_6)$ & $0^1.1^6.2^{14}.3^{34}.4^{35}.5^{33}.6^{15}.7^{56}.8^{30}.9^{58}.10^{25}.11^{52}.12^{17}.13^{28}.14^{19}.15^{25}.16^{25}.17^{24}.18^{11}.19^{40}.$ \\
& $20^{15}.21^{16}.22^5.23^9.24^{12}.25^{13}.26^{13}.27^{18}.28^{14}.29^{7}.30^{11}.31^6.32^4.33^{14}.35^3.36^2.37^{11}.38^{12}.$\\
& $39^6.40^2.41^6.43^{13}.44^9.45^5.46^7.47^7.49^1.50^2.51^2.53^2.54^4.56^2.58^2.59^4. 62^2.68^3.69^1.70^2.$\\
& $71^2.72^4.73^2.74^2.75^1.77^4.80^2.82^{10}.89^9.91^1.95^2.118^6.137^1.141^2.155^7.171^5.227^{10}.445^6 $\\
\hline
\end{tabular} \caption{Valency distribution for small exceptional groups} \label{table:tabEx}
\end{table}

\pagebreak

\end{document}